\title[CLT for certain statistics of Gram random matrices]
{A CLT for Information-theoretic statistics
of Non-centered Gram random matrices}
\author[Hachem et al.]{Walid Hachem, Malika Kharouf, Jamal Najim and Jack
W. Silverstein} \date{\today}
\newtheorem{theo}{Theorem}[section]
\newtheorem{lemma}[theo]{Lemma}
\newtheorem{prop}[theo]{Proposition}
\newtheorem{assump}{Assumption A-\hspace{-0.15cm}}
\newcommand{\R}{\mathbb{R}}
\newcommand{\C}{\mathbb{C}}
\newcommand{\E}{\mathbb{E}}
\newcommand{\vdg}{\mbox{vdiag}}
\newcommand{\bdm}{\begin{displaymath}}
\newcommand{\edm}{\end{displaymath}}
\newcommand{\bea}{\begin{eqnarray*}}
\newcommand{\eea}{\end{eqnarray*}}
\newcommand{\Rplus}{\mathbb{R}^+}
\DeclareMathOperator*{\im}{Im}
\DeclareMathOperator*{\re}{Re}
\newcommand{\bs}{\boldsymbol}
\DeclareMathOperator*{\diag}{diag}
\numberwithin{equation}{section}
\theoremstyle{remark}
\newtheorem{rem}{Remark}[section]
\DeclareMathOperator*{\tr}{Tr}
\DeclareMathOperator*{\var}{var}
\newcommand{\cvgP}[1]{\xrightarrow[#1]{\mathcal P}}
\newcommand{\cvgD}{\xrightarrow[]{\mathcal D}}
\newcommand{\lminus}{\boldsymbol{\ell}^-}
\newcommand{\lplus}{\boldsymbol{\ell}^+}
\newcommand{\dmax}{\boldsymbol{d}_{\max}}
\newcommand{\dmin}{\boldsymbol{d}_{\min}}
\newcommand{\dtmax}{\boldsymbol{\tilde{d}}_{\max}}
\newcommand{\dtmin}{\boldsymbol{\tilde{d}}_{\min}}
\newcommand{\amax}{\boldsymbol{a}_{\max}}
\begin{document}
\bibliographystyle{plain}
\begin{abstract}
  In this article, we study the fluctuations of the random variable:
$$
{\mathcal I}_n(\rho) = \frac 1N \log\det\left(\Sigma_n
  \Sigma_n^*  + \rho
  I_N \right),\quad (\rho>0)
$$
where $\Sigma_n= n^{-1/2} D_n^{1/2} X_n\tilde D_n^{1/2}
+A_n$, as the dimensions of the matrices go to infinity at the same
pace. Matrices $X_n$ and $A_n$ are respectively random and
deterministic $N\times n$ matrices; matrices $D_n$ and $\tilde D_n$
are deterministic and diagonal, with respective dimensions $N\times N$
and $n\times n$; matrix $X_n=(X_{ij})$ has centered, independent and
identically distributed entries with unit variance, either real or
complex. 

We prove that when centered and properly rescaled, the random variable
${\mathcal I}_n(\rho)$ satisfies a Central Limit Theorem and has a
Gaussian limit. The variance of ${\mathcal I}_n(\rho)$ depends on the
moment $\E X_{ij}^2$ of the variables $X_{ij}$ and also on its fourth
cumulant $\kappa= \E|X_{ij}|^4 - 2 - |\E X_{ij}^2|^2$.

The main motivation comes from the field of wireless communications,
where ${\mathcal I}_n(\rho)$ represents the mutual information of a
multiple antenna radio channel. This article closely follows the
companion article "A CLT for Information-theoretic statistics of Gram
random matrices with a given variance profile", {\em
  Ann. Appl. Probab. (2008)} by Hachem et al., however the study of
the fluctuations associated to non-centered large random matrices
raises specific issues, which are addressed here.

\end{abstract}

\maketitle
\noindent \textbf{Key words and phrases:} 
Random Matrix, Spectral measure, 
Stieltjes Transform, Central Limit Theorem.\\
\noindent \textbf{AMS 2000 subject classification:} Primary 15A52, Secondary 15A18, 60F15.

\section{Introduction} 

\subsection*{The model, the statistics, and the literature} 
Consider a $N\times n$ random matrix $\Sigma_n=(\xi_{ij}^{n})$ which 
has the expression 
\begin{equation}
\label{eq:matrix-model}
\Sigma_n= \frac{1}{\sqrt{n}} D_n^{\frac 12} X_n \tilde D_n^{\frac 12} + A_n\ ,
\end{equation}
where $A_n=(a_{ij}^{n})$ is a deterministic $N\times n$ matrix, 
$D_n$ and $\tilde D_n$ are diagonal
deterministic matrices with nonnegative entries, with respective
dimensions $N\times N$ and $n\times n$; $X_n=(X_{ij})$ is a $N\times
n$ matrix with the entries $X_{ij}$'s being centered, independent and
identically distributed (i.i.d.)  random variables with unit variance
$\E |X_{ij}|^2=1$ and finite $16^{\mathrm{th}}$ moment.

Consider the following linear statistics of the eigenvalues:
$$
{\mathcal I}_n(\rho) = \frac 1N \log \det \left( \Sigma_n \Sigma_n^* + \rho I_N \right)
=\frac 1N \sum_{i=1}^N \log (\lambda_i +\rho)\ ,
$$
where $I_N$ is the $N\times N$ identity matrix, $\rho > 0$ is a given
parameter and the $\lambda_i$'s are the eigenvalues of matrix
$\Sigma_n \Sigma_n^*$ ($\Sigma_n^*$ stands for the Hermitian adjoint
of $\Sigma_n$). This functional, known as the mutual information for
multiple antenna radio channels, is fundamental in wireless
communication as it characterizes the performance of a (coherent)
communication over a wireless Multiple-Input Multiple-Output (MIMO)
channel with gain matrix $\Sigma_n$. 
When $\Sigma_n$ follows the model described by \eqref{eq:matrix-model}, 
the deterministic matrix $A_n$ accounts for the so-called specular 
component, while $D_n$ and $\tilde D_n$ account for the correlations in 
certain bases at the receiving and emitting sides, respectively.

Since the seminal work of Telatar \cite{Tel99}, the study of the
mutual information ${\mathcal I}_n(\rho)$ of a MIMO channel (and other
performance indicators) in the regime where the dimensions of the gain
matrix grow to infinity at the same pace has turned to be extremely
fruitful. However, non-centered channel matrices have been comparatively less
studied from this point of view, as their analysis is more difficult
due to the presence of the deterministic matrix $A_n$. First order
results can be found in Girko \cite{Gir01a,Gir01b}; Dozier and
Silverstein \cite{DozSil07b,DozSil07a} established convergence results
for the spectral measure; and the systematic study of the convergence
of ${\mathcal I}_n(\rho)$ for a correlated Rician channel has been
undertaken by Hachem et al. in \cite{HLN07,Dumont10}, etc. The
fluctuations of ${\mathcal I}_n$ are important as well, for the
computation of the outage probability of a MIMO channel for instance.
With the help of the replica method, Taricco \cite{Tar06,Tar08}
provided a closed-form expression for the asymptotic variance of
${\mathcal I}_n$ when the elements of $X_n$ are Gaussian. 

The purpose of this article is to establish a Central
Limit Theorem (CLT) for ${\mathcal I}_n(\rho)$ in the following regime
$$
n\rightarrow \infty \quad \textrm{and}\quad 
0<\liminf \frac Nn \le \limsup \frac Nn <\infty\ ,
$$
(simply denoted by $n\rightarrow \infty$ in the sequel) under mild
assumptions for matrices $X_n$, $A_n$, $D_n$ and $\tilde D_n$.

The contributions of this article are twofold. From a wireless
communication perspective, the fluctuations of ${\mathcal I}_n$ are
established, regardless of the Gaussianity of the entries and the
CLT conjectured by Tarrico is fully proven. Also, this article
concludes a series of studies devoted to Rician MIMO channels,
initiated in \cite{HLN07} where a deterministic equivalent of the
mutual information was provided, and continued in \cite{Dumont10}
where the computation of the ergodic capacity was addressed and an
iterative algorithm proposed.

From a mathematical point of view, the study of the fluctuations of
${\mathcal I}_n$ is the first attempt (up to our knowledge) to
establish a CLT for a linear statistics of the eigenvalues of a Gram
non-centered matrix (so-called signal plus noise model in
\cite{DozSil07b,DozSil07a}). It complements (but does not supersede) the CLT established in
\cite{HLN08} for a centered Gram matrix with a given variance profile.
The fact that matrix $\Sigma_n$ is non-centered
($\mathbb{E}\, \Sigma_n=A_n$) raises specific issues, from a different
nature than those addressed in close-by results
\cite{AndZei06,BaiSil04,HLN08}, etc. These issues arise from the
presence in the computations of bilinear forms $u_n^* Q_n(z)\, v_n$
where at least one of the vectors $u_n$ or $v_n$ is
deterministic. Often, the deterministic vector is related to the
columns of matrix $A_n$, and has to be dealt with in such a way that
the assumption over the spectral norm of $A_n$ is exploited.  

Another important contribution of this paper is to establish the CLT
regardless of specific assumptions on the real or complex nature of
the underlying random variables. It is in particular {\em not} assumed
that the random variables are Gaussian, neither that whenever the
random variables $X_{ij}$ are complex, their second moment
$\mathbb{E}X_{ij}^2$ is zero; nor is assumed that the random variables
are circular\footnote{A random variable $X\in \mathbb{C}$ is circular
  if the distribution of $X$ is equal to the distribution of $\rho X$
  for every $\rho\in \mathbb{C}$, $|\rho|=1$. This assumption is very
  often relevant in wireless communication and has an important
  consequence; it implies that all the cross moments $\mathbb{E} |X|^k
  X^{\ell}$ ($\ell\ge 1$) are zero.}. As we shall see, all these
assumptions, if assumed, would have resulted in substantial
simplifications. As a reward however, we obtain a variance expression
which smoothly depends upon $\mathbb{E}X_{ij}^2$ whose value is 1 in
the real case, and zero in the complex case where the real and
imaginary parts are not correlated.

Interestingly, the mutual information
${\mathcal I}_n$ has a strong relationship with the Stieltjes
transform $f_n(z)= \frac 1N \mathrm{Trace}(\Sigma_n \Sigma_n^*
-zI_N)^{-1}$ of the spectral measure of $\Sigma_n \Sigma_n^*$:
$$
{\mathcal I}_n(\rho) = \log \rho + \int_{\rho}^\infty \left( 
\frac 1w - f_n(-w) \right)\, {\bf d} w\ .
$$
Accordingly, the study of the fluctuations of ${\mathcal I}_n$ is also
an important step toward the study of general linear statistics of
$\Sigma_n \Sigma_n^*$'s eigenvalues which can be expressed via the
Stieltjes transform:
$$
\frac 1N \mathrm{Trace}\ h(\Sigma_n \Sigma_n) 
=  \frac 1N \sum_{i=1}^N h(\lambda_i) = 
-\frac 1{2\mathbf{i}\pi} \oint_{\mathcal C} h(z) f_n(z)\, {\bf d}z\ ,
$$
for some well-chosen contour ${\mathcal C}$ (see for instance \cite{BaiSil04}).

Fluctuations for particular linear statistics (and general classes of
linear statistics) of large random matrices have been widely studied:
CLTs for Wigner matrices can be traced back to Girko \cite{Gir75} (see
also \cite{Gir03}). Results for this class of matrices have also been
obtained by Khorunzhy et al.  \cite{KKP96}, Boutet de Monvel and
Khorunzhy \cite{BouKho98}, Johansson \cite{Joh98}, Sinai and Sochnikov
\cite{SinSos98}, Soshnikov \cite{Sos00}, Cabanal-Duvillard
\cite{Cab01}, Guionnet \cite{Gui02}, Anderson and Zeitouni
\cite{AndZei06}, Mingo and Speicher \cite{MinSpe06}, Chatterjee
\cite{Cha09}, Lytova and Pastur \cite{LytPas09}, etc. The case of Gram
matrices has been studied in Arharov \cite{Arh71}, Jonsson
\cite{Jon82}, Bai and Silverstein \cite{BaiSil04}, Hachem et al.
\cite{HLN08}, and also in \cite{LytPas09,MinSpe06,Cha09}. 
Fluctuation results dedicated to wireless communication applications
have been developed in the centered case ($A_n=0$) by Debbah and
M\"uller \cite{DebMul03} and Tulino and Verd\`u \cite{TulVer04} (based
on Bai and Silverstein \cite{BaiSil04}), Hachem et
al. \cite{HLN08Ieee} (for Gaussian entries) and \cite{HLN08}. Other
fluctuation results either based on the replica method or on
saddle-point analysis have been developed by Moustakas, Sengupta and
coauthors \cite{MSS03,SenMit00pre}, and Tarrico \cite{Tar06,Tar08}.

\subsection*{Presentation of the results}
We first introduce the fundamental equations needed to express the deterministic approximation of the mutual information and the variance in the CLT.

\noindent {\em Fundamental equations, deterministic equivalents.} We
collect here resuls from \cite{HLN07}. The following system of
equations
\begin{equation}\label{eq:fundamental}
\left\{
  \begin{array}{ccc}
\delta_n(z) & = & \frac 1n \mathrm{Tr}\, D_n
\left( -z(I_N+\tilde \delta_n(z) D_n) + A_n(I_n + \delta_n(z) \tilde D_n)^{-1} A_n^*
\right)^{-1}
\\
\tilde \delta_n(z) & = & \frac 1n \mathrm{Tr}\, \tilde D_n 
\left( -z(I_n+\delta_n(z)\tilde D_n) + A^*_n(I_N + \tilde \delta_n(z) D_n)^{-1} A_n
\right)^{-1}
\end{array}
\right. ,\quad z\in \mathbb{C}- \mathbb{R}^+
\end{equation}
admits a unique solution $(\delta_n, \tilde \delta_n)$ in the class of
Stieltjes transforms of nonnegative measures\footnote{In fact,
  $\delta_n$ is the Stieltjes transform of a measure with total mass
  equal to $n^{-1}\mathrm{Tr} D_n$ while $\tilde \delta_n$ is the
  Stieltjes transform of a measure with total mass equal to
  $n^{-1}\mathrm{Tr} \tilde D_n$.} with support in $\mathbb{R}^+$.
Matrices $T_n(z)$ and $\tilde T_n(z)$ defined by
\begin{equation}\label{eq:def-T}
\left\{
\begin{array}{ccc}
T_n(z) & = & 
\left( -z(I_N+\tilde \delta_n(z) D_n) + A_n(I_n + \delta_n \tilde D_n)^{-1} A_n^*
\right)^{-1}
\\
\tilde T_n(z) & = & \left( -z(I_n+\delta_n(z)\tilde D_n) + A^*_n(I_N + \tilde \delta_n D_n)^{-1} A_n
\right)^{-1}
\end{array}
\right. 
\end{equation}
are approximations of the resolvent $Q_n(z) = (\Sigma_n \Sigma_n^* -zI_N)^{-1}$
and the co-resolvent $\tilde Q_n(z) = (\Sigma_n^* \Sigma_n -zI_N)^{-1}$ in the
sense that ($\xrightarrow[]{a.s.}$ stands for almost sure
convergence):
$$
\frac 1n \mathrm{Tr}\, \left( Q_n(z) -T_n(z)\right) 
\xrightarrow[n\rightarrow \infty]{a.s.} 0, \quad z \in \C - \R^+ 
$$
which readily gives a deterministic approximation of the Stieltjes
transform $N^{-1} \mathrm{Tr}\, Q_n(z)$ of the spectral measure of $\Sigma_n
\Sigma_n^*$ in terms of $T_n$ (and similarly for $\tilde Q_n$ and $\tilde
T_n$). Also proven in \cite{HLNV10pre} is the convergence of bilinear forms
\begin{equation}\label{eq:conv-bilinear-forms}
u_n^* (Q_n(z) - T_n(z))v_n \xrightarrow[n\rightarrow \infty]{a.s.} 0,
\quad z \in \C - \R^+ 
\end{equation}
where $(u_n)$ and $(v_n)$ are sequences of $N\times 1$ deterministic
vectors with bounded Euclidean norms, which complements the
picture of $T_n$ approximating $Q_n$.

Matrices $T_n=(t_{ij}\
; 1\le i,j\le N)$ and $\tilde T_n= (\tilde t_{ij}\ ; 1\le i,j \le n)$
will play a fundamental role in the sequel and enable us to express a
deterministic equivalent to $\mathbb{E} {\mathcal I}_n(\rho)$. Define
$V_n(\rho)$ by:
\begin{multline}\label{eq:equivalent}
V_n(\rho) = \frac 1N \log \det 
\left( \rho(I_N+\tilde \delta_n D_n) + 
A_n(I_n + \delta_n \tilde D_n)^{-1} A_n^* \right) \\
+\frac 1N \log\det(I_n+\delta_n \tilde D_n) 
-\frac{\rho n }{N}  \delta_n \tilde \delta_n\ , 
\end{multline}
where $\delta_n$ and $\tilde \delta_n$ are evaluated at
$z=-\rho$.  Then the difference $\mathbb{E}\, {\mathcal I}_n(\rho)
-V_n(\rho)$ goes to zero as $n\rightarrow \infty$.

In order to study the fluctuations $N({\mathcal I}_n(\rho) -
V_n(\rho))$ and to establish a CLT, we study separately the quantity
$N({\mathcal I}_n(\rho) - \mathbb{E} {\mathcal I}_n(\rho))$ from which
the fluctuations arise and the quantity $N(\mathbb{E} {\mathcal
  I}_n(\rho) - V_n(\rho) )$ which yields a bias.


\noindent  {\em The fluctuations.} In every case where the fluctuations
of the mutual information have been studied, the variance of $N\left(
  {\mathcal I}_n(\rho) -V_n(\rho)\right)$ always proved to take a
remarkably simple closed-form expression (see for
instance \cite{MSS03,Tar08,TulVer04} and in a more mathematical
flavour \cite{HLN08Ieee,HLN08}). The same phenomenon again occurs for
the matrix model $\Sigma_n$ under
consideration. Drop the subscripts $N,n$ and let
\begin{equation}\label{eq:def-gamma}
\gamma =\frac 1n \mathrm{Tr}\, D T D T\ , 
\ \tilde \gamma =\frac 1n \mathrm{Tr}\, \tilde D \tilde T \tilde D \tilde T\ ,
\ \underline{\gamma} =\frac 1n \mathrm{Tr}\, D T D \bar{T}\ , 
\ \underline{\tilde \gamma} =\frac 1n \mathrm{Tr}\, \tilde D \tilde T \tilde D \bar{\tilde T}\ ,
\end{equation} 
where $\bar{M}$ stands for the (elementwise) conjugate of matrix $M$.
Let 
$$
\vartheta=\E (X_{ij})^2\quad \textrm{and}\quad \kappa= \E |X_{ij}|^4 -2 - |\vartheta|^2\ .
$$ 
Let 
\begin{multline*} 
\Theta_n = -\log \left( 
\left( 1- \frac 1n \mathrm{Tr}\, D^{\frac 12} T A(I +\delta \tilde D)^{-1} 
\tilde D  (I +\delta \tilde D)^{-1}  A^* T D^{\frac 12}
\right)^2 - \rho^2 \gamma \tilde \gamma \right)\\
-\log \left( 
\left| 1- \vartheta \frac 1n \mathrm{Tr}\, D^{\frac 12} \bar{T} \bar{A}(I + \delta \tilde D)^{-1} 
\tilde D  (I + \delta \tilde D)^{-1}  A^* T D^{\frac 12}
\right|^2
-|\vartheta|^2\rho^2 \underline{\gamma}\, \underline{\tilde \gamma} 
\right) \\
+ \kappa \, \frac{\rho^2}{n^2} \sum_i d_i^2 t_{ii}^2 \sum_j \tilde d_j^2 \tilde t_{jj}^2\ ,
\end{multline*}
where $d_i=[D_n]_{ii}$, $\tilde d_j=[\tilde D_n]_{jj}$, and all the
needed quantities are evaluated at $z=-\rho$. The CLT can then be expressed 
as:
$$
\frac N{\sqrt{\Theta_n}}\left( {\mathcal I}_n -\mathbb{E}{\mathcal I}_n
\right) \xrightarrow[n\rightarrow \infty]{\mathcal D} {\mathcal
  N}(0,1)\ ,
$$
where $\xrightarrow[]{\mathcal D}$ stands for convergence in
distribution. Although complicated at first sight, variance
$\Theta_n$ encompasses the case of standard real random variables
($\vartheta=1$), standard complex random variables ($\vartheta=0$) and
all the intermediate cases $0<| \vartheta | < 1$. Moreover,
$\Theta_n$ often takes simpler forms if the variables are Gaussian,
real, etc. (see for instance Remark \ref{rem:simpler-forms}).


\noindent {\em The bias.} When the entries of $X_n$ are complex Gaussian 
with independent and identically distributed real and imaginary parts, 
$\kappa=\vartheta=0$, and it has already been proven in \cite{Dumont10} that 
$\mathbb{E}\, {\mathcal I}_n(\rho) -V_n(\rho) = {\mathcal O}(n^{-2})$. 
When any of $\kappa$ or $\vartheta$ is non zero, a bias term 
${\mathcal B}_n(\rho) \neq 0$ appears in the sense that  
$$
N
\left(\mathbb{E} {\mathcal I}_n(\rho) - V_n(\rho) \right) 
- {\mathcal B}_n(\rho) \xrightarrow[n\rightarrow \infty]{} 0\ . 
$$
We establish the existence of this bias and provide its expression in the
case where $A = 0$.

\subsection*{Outline of the article}

In Section \ref{sec:CLT}, we provide the main assumptions and state
the main results of the paper: Definition of the variance $\Theta_n$
and asymptotic fluctuations of $N \left( {\mathcal I}_n(\rho) - \E
  {\mathcal I}_n(\rho) \right)$ (Theorem \ref{th:CLT}), asymptotic
bias of $N \left( \E {\mathcal I}_n(\rho) - V_n(\rho) \right)$
(Proposition \ref{prop:bias}). Notations, important estimates and classical
results are provided in Section \ref{sec:notations}. Sections
\ref{sec:proof-clt}, \ref{sec:proof-clt-2} and \ref{sec:proof-clt-3}
are devoted to the proof of Theorem \ref{th:CLT}. In Section
\ref{sec:proof-clt}, the general framework of the proof is exposed; in
Section \ref{sec:proof-clt-2}, the central part of the CLT and of the
identification of the variance are established; remaining proofs are
provided in Section \ref{sec:proof-clt-3}. Finally, proof of Proposition
\ref{prop:bias} (bias) is provided in Section \ref{sec:proof-bias}.

\subsection*{Acknowlegment}
Hachem and Najim's work was partially supported by the French Agence Nationale 
de la Recherche, project SESAME n$^\circ$ ANR-07-MDCO-012-01.
Silverstein's work was supported by the 
U.S.~Army Research Office under Grant W911NF-09-1-0266.

\section{The Central Limit Theorem for ${\mathcal I}_n(\rho)$}
\label{sec:CLT}
\subsection{Notations, assumptions and first-order results}
Let $\mathbf{i}=\sqrt{-1}$.  As usual, $\Rplus = \{ x \in \R \ : \ x \geq
0 \}$. Denote by $\cvgP{ }$ the convergence in probability of random variables 
and by $\cvgD$ the convergence in distribution of probability measures. 
Denote by $\mathrm{diag}(a_i;\,1\le i\le k)$ the $k\times k$ diagonal matrix
whose diagonal entries are the $a_i$'s.  Element $(i,j)$ of matrix $M$
will be either denoted $m_{ij}$ or $[M]_{ij}$ depending on the
notational context. If $M$ is a $n\times n$ square matrix,
$\mathrm{diag}(M)=\mathrm{diag}(m_{ii}; 1\le i\le n)$. Denote by $M^T$
the matrix transpose of $M$, by $M^*$ its Hermitian adjoint, by
$\bar{M}$ the (elementwise) conjugate of matrix $M$, by $\tr (M)$ its
trace and $\det(M)$ its determinant (if $M$ is square).  When dealing
with vectors, $\|\cdot\|$ will refer to the Euclidean norm. In the case of
matrices, $\|\cdot\|$ will refer to the spectral norm. 
We shall denote by $K$ a generic constant that does not depend on $n$ and 
that might change from a line to another. If $(u_n)$ is a
sequence of real numbers, then $u_n={\mathcal O}(v_n)$ stands for
$|u_n|\le K|v_n|$ where constant $K$ does not depend on $n$.

Recall that 
\begin{equation}\label{eq:def-Sigma}
\Sigma_n = \frac 1{\sqrt{n}} D_n^{1/2} X_n \tilde D_n^{1/2} +
A_n\ ,
\end{equation}
denote $D_n=\mathrm{diag}(d_i,\ 1\le i\le N)$ and $\tilde
D_n=\mathrm{diag}(\tilde d_j,\ 1\le j\le n)$. When no confusion can
occur, we shall often drop subscripts and superscripts $n$ for
readability. Recall also that the asymptotic regime of interest 
is: 
$$
n\rightarrow \infty \quad \textrm{and}\quad 
0\ <\ \liminf \frac Nn \ \le\ \limsup \frac Nn \ <\infty\ ,
$$
and will be simply denoted by $n\rightarrow \infty$ in the sequel. We can assume without loss 
of generality that there exist nonnegative real numbers $\lminus$ and $\lplus$
such that:
\begin{equation}\label{eq:asymptotic}
0\ <\ \lminus \ \le\ \frac Nn \ \le\ \lplus \ <\ \infty\quad\textrm{as}\quad n\rightarrow \infty\ .
\end{equation}

\begin{assump}
\label{ass:X}
The random variables $(X_{ij}^n\ ;\ 1\le i\le N,\,1\le j\le n\,,\, n\ge1)$ 
are complex, independent and identically
distributed. They satisfy 
$$
\E X_{11}^n =0, \quad
\E|X_{11}^n|^2=1 \quad \mathrm{and} \quad 
\E|X_{11}^n|^{16}<\infty \ . 
$$
\end{assump}

\begin{rem} (Gaussian distributions) If $X_{11}$ is a standard
  complex or real Gaussian random variable, then $\kappa=0$. More
  precisely, in the complex case, $\re(X_{11})$ and
  $\im(X_{11})$ are independent real Gaussian random
  variables, then $\vartheta=\kappa=0$; in the real case, then
  $\vartheta=1$ while $\kappa=0$.
\end{rem}

\begin{assump}
\label{ass:A} The family of deterministic $N\times n$ complex matrices 
$(A_n,n\ge 1)$ is bounded for the spectral norm:
$$
\amax= \sup_{n\ge 1} \|A_n\| <\infty\ .
$$
\end{assump}

\begin{assump}\label{ass:D} The families of real deterministic $N\times N$ and $n\times n$ 
  matrices $(D_n)$ and $(\tilde D_n)$ are diagonal with non-negative
  diagonal elements, and are bounded for the spectral norm as
  $n\rightarrow\infty$:
$$
\dmax = \sup_{n\ge 1} \|D_n\| <\infty\quad\textrm{and}\quad
\dtmax = \sup_{n\ge 1} \|\tilde D_n\| <\infty\ .
$$  
Moreover, 
$$
\dmin  = \inf_{n} \frac 1n \tr D_n >0\quad \textrm{and}\quad 
\dtmin = \inf_{n} \frac 1n \tr \tilde D_n >0\ .
$$
\end{assump}

\begin{theo}[First order results - \cite{HLN07,Dumont10}]
\label{th:first-order}  
  Consider the $N\times n$ matrix $\Sigma_n$ given by
  \eqref{eq:def-Sigma} and assume that {\bf A-\ref{ass:X}}, {\bf
    A-\ref{ass:A}} and {\bf A-\ref{ass:D}} hold true. Then, the system
  \eqref{eq:fundamental} admits a unique solution $(\delta_n, \tilde
  \delta_n)$ in the class of Stieltjes transforms of nonnegative
  measures. Moreover, 
\[
\frac 1n \mathrm{Tr}\, \left( Q_n(z) -T_n(z)\right) 
\xrightarrow[n\rightarrow \infty]{a.s.} 0 \quad \text{and} \quad 
\frac 1n \mathrm{Tr}\, \left( \tilde Q_n(z) - \tilde T_n(z)\right) 
\xrightarrow[n\rightarrow \infty]{a.s.} 0 \quad \text{for any } 
z \in \C - \R^+ .  
\]
\end{theo}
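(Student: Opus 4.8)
The statement collects results of \cite{HLN07,Dumont10}; I indicate how the argument would run.

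\emph{Existence and uniqueness.} The plan is a fixed-point scheme. Let $\Psi=(\Psi_1,\Psi_2)$ be the map sending a pair of functions $(\delta,\tilde\delta)$ holomorphic on $\C-\R^+$ to the right-hand sides of \eqref{eq:fundamental}. Using {\bf A-\ref{ass:A}}--{\bf A-\ref{ass:D}} I would first check that $\Psi$ stabilizes the class of pairs of Stieltjes transforms of nonnegative measures with respective masses $n^{-1}\tr D_n$ and $n^{-1}\tr\tilde D_n$: the key point is that whenever $\im z\,\im\delta\ge 0$ and $\im z\,\im\tilde\delta\ge 0$ the matrix $-z(I_N+\tilde\delta D_n)+A_n(I_n+\delta\tilde D_n)^{-1}A_n^*$ is invertible with a norm-controlled inverse, so that the imaginary-part computation on $\Psi_1,\Psi_2$ carries the correct sign and the behaviour as $z\to\infty$ is the right one. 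On a domain $\{\re(-z)\ge c_0\}$ with $c_0$ large, the resolvent identity $B^{-1}-C^{-1}=B^{-1}(C-B)C^{-1}$ turns $\Psi$ into a contraction for $\|(\delta,\tilde\delta)\|=\max(|\delta|,|\tilde\delta|)$, whence a unique solution there, which I would extend analytically to $\C-\R^+$ by a normal-family (Montel) argument, the iterates being locally uniformly bounded. For uniqueness on all of $\C-\R^+$, subtracting the equations satisfied by two solutions and applying the resolvent identity produces a homogeneous linear system $(I_2-M_n(z))(\delta-\delta',\tilde\delta-\tilde\delta')^{T}=0$, with a $2\times 2$ matrix $M_n(z)$ whose entries are normalized traces of products of the corresponding resolvents with $D_n$ or $\tilde D_n$; a Cauchy--Schwarz bound gives $M_n(z)$ spectral radius $<1$ for $\re(-z)$ large, forcing the difference to vanish there, hence everywhere by analyticity.

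\emph{Approximation of the resolvent.} Write $\Sigma_n=[y_1,\dots,y_n]$ with $y_j=n^{-1/2}\tilde d_j^{1/2}D_n^{1/2}\xi_j+a_j$, where $\xi_j$ is the $j$-th column of $X_n$ and $a_j$ that of $A_n$. Starting from the identity $Q_n(\Sigma_n\Sigma_n^*-zI_N)=I_N$ together with $\Sigma_n\Sigma_n^*=\sum_{j}y_jy_j^*$, the Sherman--Morrison rank-one formula expresses $Q_n$ through the quantities $y_jy_j^*Q_n^{(j)}/(1+y_j^*Q_n^{(j)}y_j)$, where $Q_n^{(j)}$ is the resolvent with the $j$-th column removed. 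I would then expand $y_j^*Q_n^{(j)}y_j$ and invoke: (i) the trace lemma---finite fourth moment, amply provided by {\bf A-\ref{ass:X}}, suffices---to replace $n^{-1}\tilde d_j\,\xi_j^*D_n^{1/2}Q_n^{(j)}D_n^{1/2}\xi_j$ by $\tilde d_j\,n^{-1}\tr D_nQ_n^{(j)}$; (ii) the fact that the cross terms $n^{-1/2}\tilde d_j^{1/2}(\xi_j^*D_n^{1/2}Q_n^{(j)}a_j+a_j^*Q_n^{(j)}D_n^{1/2}\xi_j)$ are conditionally centered and negligible; (iii) the rank-one bound $|\tr((Q_n-Q_n^{(j)})B)|\le\|B\|/|\im z|$; and (iv) the bilinear-form convergence \eqref{eq:conv-bilinear-forms} to control the deterministic diagonal term $a_j^*Q_n^{(j)}a_j$, while {\bf A-\ref{ass:A}} keeps $(I_n+\delta_n\tilde D_n)^{-1}$ and the relevant Schur complements bounded. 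This yields a self-consistent approximation of $Q_n$, in normalized trace, by the matrix obtained on plugging the random quantities $\hat\delta_n=n^{-1}\tr D_nQ_n$ and $\hat{\tilde\delta}_n=n^{-1}\tr\tilde D_n\tilde Q_n$ into \eqref{eq:def-T}. An $L^4$ martingale-difference estimate along the filtration generated by the columns of $X_n$, together with Borel--Cantelli, upgrades the convergence to almost sure, and the stability of \eqref{eq:fundamental}---the same $2\times 2$ linear-system bound, now with a vanishing perturbation---shows $(\hat\delta_n,\hat{\tilde\delta}_n)$ is a.s.\ close to $(\delta_n,\tilde\delta_n)$. Hence $n^{-1}\tr(Q_n-T_n)\to 0$ a.s.; the statement for $\tilde Q_n$ is symmetric.

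\emph{Main obstacle.} The step genuinely new with respect to the centered model is the presence of the deterministic column $a_j$ in the Sherman--Morrison recursion: the diagonal term $a_j^*Q_n^{(j)}a_j$ escapes the trace lemma and has to be handled through \eqref{eq:conv-bilinear-forms}, all while keeping the inverses $(I_n+\delta_n\tilde D_n)^{-1}$ and their Schur complements bounded---precisely where the spectral-norm control $\|A_n\|\le\amax$ of {\bf A-\ref{ass:A}} is essential.
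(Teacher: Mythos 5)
The paper does not prove this theorem --- it simply cites \cite{HLN07,Dumont10} --- so there is no internal proof to compare against. Your reconstruction correctly identifies the standard machinery: a Stieltjes-transform fixed point (contraction for $\re(-z)$ large, normal-family extension, uniqueness through analyticity and a $2\times 2$ perturbed linear system), a Sherman--Morrison leave-one-out expansion, the trace lemma, the rank-one perturbation bound, and a martingale/Borel--Cantelli upgrade to almost-sure convergence.

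The step I would flag as a genuine gap is your item (iv), where you invoke the bilinear-form convergence \eqref{eq:conv-bilinear-forms} to control $a_j^* Q_n^{(j)} a_j$. This is anachronistic and risks circularity: \eqref{eq:conv-bilinear-forms} is exactly the refinement the paper attributes to the later work \cite{HLNV10pre}, whose argument builds on the very first-order trace convergence you are trying to establish. Independently of that, \eqref{eq:conv-bilinear-forms} states a.s.\ convergence for a single fixed bounded sequence $(u_n),(v_n)$; the telescoping trace sum needs instead an estimate summable across the $n$ columns $a_1,\dots,a_n$, of the type $\sum_{j} \E\bigl|a_j^* (Q_n^{(j)}-{\mathcal T}_j) a_j\bigr|^2 \le K$, not a qualitative a.s.\ statement for one fixed pair. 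In the cited argument one controls $a_j^* Q_n^{(j)} a_j$ directly by matching it against $a_j^* {\mathcal T}_j a_j$ (the intermediate deterministic matrix of \eqref{eq:T-cal-j}) through the fixed-point equations, the rank-one bound of Lemma \ref{lem:rank1-perturbation}, and moment estimates of Lemma \ref{lm-approx-quadra} type; these ingredients are logically prior to the bilinear-form result and supply the required uniformity over $j$. Replacing step (iv) with that direct route removes the gap; the rest of your sketch is sound.
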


\subsection{The Central Limit Theorem} 
In this section, we state the CLT then provide the asymptotic bias in some 
particular cases.

\begin{theo}[The CLT]\label{th:CLT}
Consider the $N\times n$ matrix $\Sigma_n$ given by \eqref{eq:def-Sigma}
and assume that {\bf A-\ref{ass:X}}, {\bf A-\ref{ass:A}} and 
{\bf A-\ref{ass:D}} hold true. Recall the definitions of $\delta$ and 
$\tilde{\delta}$ given by \eqref{eq:fundamental}, $T$ and
$\tilde T$ given by \eqref{eq:def-T}, $\gamma$, $\tilde \gamma$, 
$\underline{\gamma}$ and $\underline{\tilde \gamma}$ given by 
\eqref{eq:def-gamma}. Let $\rho>0$. All the considered quantities are 
evaluated at $z=-\rho$. Define $\Delta_n$ and $\underline\Delta_n$ as 
\[
\Delta_n = 
\left( 1- \frac 1n \tr D^{\frac 12} T A(I + \delta \tilde D)^{-2} 
\tilde D A^* T D^{\frac 12}
\right)^2 - \rho^2 \gamma \tilde \gamma 
\]
and 
\[
\underline\Delta_n = 
\left| 1- \vartheta 
\frac 1n \mathrm{Tr}\, D^{\frac 12} \bar{T} \bar{A}(I +\delta \tilde D)^{-2}
\tilde D A^* T D^{\frac 12}
\right|^2
-|\vartheta|^2\rho^2 \underline{\gamma}\, \underline{\tilde \gamma} . 
\]
Then the real numbers 
\begin{equation}\label{eq:def-variance}
\Theta_n= -\log \Delta_n -\log\underline\Delta_n 
+ \kappa \, 
\frac{\rho^2}{n^2} \sum_{i=1}^N d_i^2 t_{ii}^2 
\sum_{j=1}^n \tilde d_j^2 \tilde t_{jj}^2
\end{equation}
are well-defined and satisfy:
\begin{equation}\label{eq:estimate-variance}
0\quad  < \quad \liminf_n \Theta_n \quad \leq\quad  \limsup_n 
\Theta_n\quad 
<\quad  \infty
\end{equation}
as $n \to \infty$. Let 
$${\mathcal I}_n(\rho) = \frac 1N \log \det\left(\Sigma_n \Sigma_n^* + \rho
  I_N \right)\ ,$$ then the following convergence holds true:
$$
\frac N{\sqrt{\Theta_n}}
\left( {\mathcal I}_n(\rho) -\mathbb{E}{\mathcal I}_n(\rho)
\right) \xrightarrow[n\rightarrow \infty]{\mathcal D} {\mathcal
  N}(0,1)\ .
$$
\end{theo}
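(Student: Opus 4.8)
The plan is to follow the by-now classical martingale-CLT strategy for linear statistics of large random matrices, as in \cite{BaiSil04,HLN08}, but with the extra bookkeeping forced by the non-centered term $A_n$. Using the integral representation ${\mathcal I}_n(\rho) = \log\rho + \int_\rho^\infty (w^{-1} - f_n(-w))\,{\bf d}w$, where $f_n(z)=N^{-1}\tr Q_n(z)$, it suffices to control the fluctuations of $\tr Q_n(-w)$ uniformly in $w\ge\rho$ and then integrate. Write $\tr Q_n(z) - \E\tr Q_n(z) = \sum_{k=1}^n (\E_{k-1} - \E_k)\tr Q_n(z)$, where $\E_k$ is the conditional expectation given the columns $k+1,\dots,n$ of $X_n$ (equivalently, the columns $\eta_k$ of the noise part). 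Each summand is a martingale difference, so the convergence will come from the martingale CLT of \cite{BaiSil04} once we verify (i) the Lindeberg condition, for which the finite $16$th moment and the rank-one perturbation bounds $|\tr(Q - Q_{(k)})\,|\le K/|\im z|$ suffice, and (ii) convergence of the sum of conditional variances $\sum_k \E_{k-1}\bigl|(\E_{k-1}-\E_k)\tr Q_n\bigr|^2$ to a deterministic limit, which is the crux.

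For step (ii), the first reduction is to replace, inside each martingale increment, the resolvent $Q_n$ by $Q_{(k)}$ (the resolvent with column $k$ removed) using the standard rank-one perturbation identities, so that $(\E_{k-1}-\E_k)\tr Q_n$ becomes, up to negligible terms, $-(\E_{k-1}-\E_k)\bigl(\beta_k\, \eta_k^* Q_{(k)}^2 \eta_k\bigr)$ for suitable scalars $\beta_k$, where $\eta_k = n^{-1/2}D^{1/2}X_n e_k \tilde d_k^{1/2} + A_n e_k$ is the $k$th column of $\Sigma_n$. Here is where the non-centered structure bites: $\eta_k$ splits into a random part $y_k=n^{-1/2}\tilde d_k^{1/2} D^{1/2} X_n e_k$ and a \emph{deterministic} part $a_k = A_n e_k$, so the quadratic form $\eta_k^* M \eta_k$ expands into $y_k^* M y_k + a_k^* M a_k + 2\re(a_k^* M y_k)$. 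The first term concentrates around $n^{-1}\tilde d_k\tr(D M)$ by the usual trace lemma; the cross term $a_k^* M y_k$ is a \emph{bilinear} form with one deterministic vector, and must be handled by the bilinear-form convergence \eqref{eq:conv-bilinear-forms} — this is exactly the ``specific issue'' flagged in the introduction. The deterministic term $a_k^* M a_k$ carries the dependence on $A_n$ and, after summation over $k$, produces the traces involving $A_n(I+\delta\tilde D)^{-1}A_n^*$ that appear in $\Delta_n$ and $\underline\Delta_n$. Assembling all contributions, expanding the square $|(\E_{k-1}-\E_k)(\cdot)|^2$, and keeping track of the three moment-dependent channels — the $\vartheta$-free ``modulus-squared'' part, the $\vartheta$-twisted ``$\re$ of square'' part coming from $\E(X_{ij}^2)\neq 0$, and the diagonal $\kappa$-part coming from the fourth cumulant — one matches the sum of conditional variances to $\Theta_n$ after an interchange-of-limits argument. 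The emergence of the $-\log$ structure is the familiar solution of a linear fixed-point equation for $\sum_k (\cdot)^2$: a geometric-type resummation $\sum_j \gamma^j = (1-\gamma)^{-1}$ followed by integration in $\rho$ turning $(1-\gamma)^{-1}$ into $-\log(1-\gamma)$.

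Two auxiliary points must be dispatched along the way. First, one needs the lower and upper bounds \eqref{eq:estimate-variance} on $\Theta_n$; the upper bound follows from the spectral-norm assumptions {\bf A-\ref{ass:A}}, {\bf A-\ref{ass:D}} and standard estimates on $T_n,\tilde T_n$ near the real axis, while the lower bound $\liminf\Theta_n>0$ requires showing $\Delta_n$ and $\underline\Delta_n$ are bounded away from $0$ — a Cauchy--Schwarz / positivity argument exploiting $\rho^2\gamma\tilde\gamma<1$ (which is itself a consequence of the fixed-point equations, analogous to the ``$\underline\gamma<1$'' type estimates in \cite{BaiSil04,HLN08}) together with $\dmin,\dtmin>0$. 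Second, all the resolvent estimates and the trace/bilinear-form approximations must be shown uniform for $z=-w$ with $w\in[\rho,\infty)$, with enough decay in $w$ to justify the integral representation and the dominated-convergence passage; this is routine given that $\|Q_n(-w)\|\le w^{-1}$ and $\|Q_n(-w)\|\le (\lambda_{\min}+w)^{-1}$ but needs to be stated carefully. The main obstacle, as anticipated in the introduction, is the systematic control of the bilinear forms $a_k^* Q_{(k)}^2 a_k$ and $a_k^* Q_{(k)}^2 y_k$ and their fluctuations: unlike the centered case, these do not reduce to normalized traces, and one must track how the bound $\amax=\sup_n\|A_n\|<\infty$ propagates through the rank-one surgery and the martingale-difference expansion so that the error terms are genuinely $o(1)$ in the sum over $k$. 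Once that machinery is in place — essentially an enhanced version of the estimates in \cite{HLNV10pre} adapted to $Q_n^2$ — the rest is the standard, if lengthy, matching computation.
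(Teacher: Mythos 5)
Your proposal is correct in spirit but takes a genuinely different route from the paper, and it blurs a few points that matter.

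\textbf{Different decomposition.} You propose to go through the Stieltjes transform: write ${\mathcal I}_n(\rho) = \log\rho + \int_\rho^\infty (w^{-1}-f_n(-w))\,{\bf d}w$, establish fluctuations of $\tr Q_n(-w)$ for each $w$, then integrate. The paper instead decomposes the log-determinant \emph{directly} as a telescoping sum of martingale increments, $ {\mathcal I}_n-\E{\mathcal I}_n=\sum_{j}(\E_j-\E_{j-1})\log(1+\Gamma_j)$, following \cite{HLN08}; the integral representation in $w$ is reserved only for the bias term in Proposition~\ref{prop:bias}. Your route (essentially the Bai--Silverstein \cite{BaiSil04} / \cite{HLN08Ieee} strategy) is viable in principle but is more demanding: to pass the CLT through the integral you need the covariance structure $\mathrm{Cov}(\tr Q_n(-w_1),\tr Q_n(-w_2))$ as a two-parameter object, plus tightness, plus enough decay in $w$ to justify the interchange of integration with the limit. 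The paper's direct decomposition sidesteps all of this and lands the $-\log$ by a purely algebraic telescoping, not by integration.

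\textbf{Where the $-\log$ actually comes from.} You write that the $-\log$ arises from ``a geometric-type resummation $\sum_j\gamma^j=(1-\gamma)^{-1}$ followed by integration in $\rho$.'' This is the mechanism in the Gaussian-tools/contour approaches, but it is not what happens here. In the paper, after reducing the sum of conditional variances (via the covariance identity \eqref{eq:identite-fondamentale}) to four families of terms $\sum_j\chi_{\ell j}$, $\ell=1,\dots,4$, the $-\log\Delta_n$ and $-\log\underline\Delta_n$ emerge from a $2\times2$ \emph{perturbed linear system} \eqref{eq:system-2x2} in auxiliary quantities $(\psi_j,\varphi_j)$. Solving it produces a telescoping sum $\sum_j ({\bs\Delta}_{j-1}-{\bs\Delta}_j)/{\bs\Delta}_j = \sum_j\log({\bs\Delta}_{j-1}/{\bs\Delta}_j)+{\mathcal O}(n^{-1/2}) = -\log\Delta_n+{\mathcal O}(n^{-1/2})$. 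Nothing is integrated in $\rho$. Anticipating this intermediate linear-system structure — Steps 1--4 of Section~\ref{sec:proof-clt-2} — is really the heart of the identification of the variance, and it is the piece your proposal does not foresee.

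\textbf{What you do get right.} Your diagnosis of the ``specific issues'' is on target: the splitting $\eta_j=y_j+a_j$ produces cross terms $a_j^*Q_jy_j$ requiring the bilinear-form bounds of Theorem~\ref{theo:quadra}, the covariance identity has exactly the three moment channels you describe (modulus-squared, $\vartheta$-twisted, and diagonal $\kappa$), and the lower bound on $\Theta_n$ does come from the chain $\underline{\bs\Delta}_j\ge{\bs\Delta}_j$ together with Lemmas~\ref{lm:estimates} and \ref{lm:var-bounds}. But if you actually pursued the Stieltjes-transform-plus-integration route you would have to rebuild all the uniform-in-$w$ estimates that the direct log-det decomposition renders unnecessary, so the paper's approach is the leaner of the two.
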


\begin{rem}\label{rem:simpler-forms}(Simpler forms for the variance) We consider here special cases where the 
  variance $\Theta_n$ takes a simpler form.
\begin{enumerate}
\item The standard complex Gaussian case. Assume that the $X_{ij}$'s
  are standard complex Gaussian random variables, i.e. that both the
  real and imaginary parts of $X_{ij}$ are independent real Gaussian
  random variables, each with variance 1/2. In this case,
  $\vartheta=\kappa=0$ and $\Theta_n$ is equal to $-\log\Delta_n$, and we in 
  particular recover the variance formula given in \cite{Tar08}.
\item The standard real case. Assume that the $X_{ij}$'s are standard
  real random variables, assume also that $A$ has real entries. Then
  $\Delta_n$ and $\underline\Delta_n$ are equal. 
\item The 'signal plus noise' model. In this case, $D_n=I_N$ and $\tilde
  D_n=I_n$, which already yields simplifications in the variance expression. 
  In the case where $\vartheta=0$, the variance is:
\[
\quad\quad\quad  
\Theta_n= - \log \left( \left( 1 - 
n^{-1} (1+\delta)^{-2} \tr T A A^* T \right)^2 - \rho^2 \gamma \tilde \gamma
\right) + \frac{\kappa \rho^2}{n^2} \sum_i d^2_i t^2_{ii} \sum_j
\tilde d^2_j \tilde t^2_{jj}\ .
\] 

As one may easily check, the first term of the variance only depends
upon the spectrum of $A A^*$. The second term however also depends
on the eigenvectors of $AA^*$ (see for instance \cite{HKKN10}).
\end{enumerate}

\end{rem}

A full study of the asymptotic bias turns out to be extremely involved
and would have substantially increased the volume of this paper. In
the following proposition, we restrict our study to two important particular 
cases: 
\begin{prop}[The bias - particular cases]
\label{prop:bias} 
Assume that the setting of Theorem \ref{th:CLT} holds true. 
\begin{enumerate}
\item[(i)] If the random variables $(X_{ij}^n; i,j,n)$ are complex 
  with $\re(X_{ij}^n)$ and $\im(X_{ij}^n)$
  independent, both with distribution ${\mathcal N}(0,1/2)$, then:
$$
N\left(\mathbb{E} {\mathcal I}_n(\rho) - V_n(\rho) \right) 
= {\mathcal O}\left( \frac 1N\right)\ .
$$
\item[(ii)] If $A_n=  0$, let the quantities $\gamma$ and $\tilde\gamma$
be evaluated at $z=-\rho$ and consider
\begin{equation}
\label{def:bias}
{\mathcal B}_n = - \frac{\kappa}{2} \rho^2 \gamma \tilde\gamma 
+ \frac 12 \log( 1 - |\vartheta |^2 \rho^2 \gamma \tilde\gamma )  . 
\end{equation}
Then 
$$
N \left( \E \mathcal{I}_n(\rho) - V_n(\rho)\right) - {\mathcal B}_n 
\xrightarrow [n \rightarrow \infty]{} 0.
$$
\end{enumerate}
\end{prop}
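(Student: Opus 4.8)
\emph{Reduction to a second-order resolvent estimate.}
The plan is to handle both statements through the Stieltjes-transform representation of the mutual information recalled in the introduction,
\[
\mathcal{I}_n(\rho) = \log\rho + \int_\rho^\infty \Bigl( \frac1w - \frac1N \tr Q_n(-w) \Bigr)\, \mathrm{d}w .
\]
First I would derive the analogous identity for the deterministic equivalent: differentiating \eqref{eq:equivalent} in $\rho$ and using the fundamental system \eqref{eq:fundamental} to cancel the contributions of $\delta_n'(-\rho)$ and $\tilde\delta_n'(-\rho)$ (the standard computation of \cite{HLN07}), one obtains $V_n'(\rho) = N^{-1}\tr T_n(-\rho)$; together with $V_n(\rho) - \log\rho \to 0$ as $\rho \to \infty$ — which follows from $\delta_n(-w), \tilde\delta_n(-w) = \mathcal{O}(w^{-1})$ and the explicit form of \eqref{eq:equivalent} — this yields $V_n(\rho) = \log\rho + \int_\rho^\infty ( w^{-1} - N^{-1}\tr T_n(-w) )\, \mathrm{d}w$. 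Subtracting the two representations,
\[
N\bigl( \E\mathcal{I}_n(\rho) - V_n(\rho) \bigr) = - \int_\rho^\infty \E\tr\bigl( Q_n(-w) - T_n(-w) \bigr)\, \mathrm{d}w ,
\]
so everything reduces to a second-order analysis of $\E\tr(Q_n(z) - T_n(z))$ at $z = -w$, $w \ge \rho$, with remainders decaying in $w$ at rate $w^{-2}$ so that the tail of the integral is controlled; the rough bounds $\|Q_n(-w)\| \le w^{-1}$ and $\|T_n(-w)\| \le w^{-1}$ provide this decay.

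\emph{Part (i).}
For standard complex Gaussian entries one has $\vartheta = \kappa = 0$, and the refined first-order analysis of the Rician model performed in \cite{Dumont10} gives $\E\tr(Q_n(-w) - T_n(-w)) = \mathcal{O}(w^{-2} n^{-2})$ uniformly in $w \ge \rho$, equivalently $\E\mathcal{I}_n(\rho) - V_n(\rho) = \mathcal{O}(n^{-2})$. Since $n$ and $N$ are of the same order by \eqref{eq:asymptotic}, multiplication by $N$ gives the claimed $\mathcal{O}(1/N)$. (One may alternatively reproduce the short Gaussian argument: a resolvent identity combined with the Gaussian integration-by-parts formula produces a perturbed version of \eqref{eq:fundamental} for $N^{-1}\E\tr D_n Q_n$ and $n^{-1}\E\tr\tilde D_n \tilde Q_n$, whose perturbation terms split into variance terms bounded by $n^{-2}$ via the Poincar\'e--Nash inequality and into terms carrying $\vartheta$ or $\kappa$, which vanish in the present case.)

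\emph{Part (ii).}
When $A_n = 0$ the matrix $\Sigma_n = n^{-1/2} D_n^{1/2} X_n \tilde D_n^{1/2}$ is a centered Gram matrix with a separable variance profile; here $T_n(-\rho) = \rho^{-1}(I_N + \tilde\delta_n D_n)^{-1}$ and $\tilde T_n(-\rho) = \rho^{-1}(I_n + \delta_n \tilde D_n)^{-1}$ are real. Following the lines of \cite{HLN08,BaiSil04} I would expand $\E\tr Q_n(z)$ around $\tr T_n(z)$ using the integration-by-parts (cumulant expansion) formula, now keeping the $\mathcal{O}(1/n)$ contributions and discarding only the $\mathcal{O}(1/n^2)$ terms and the variance terms, the latter bounded by the Poincar\'e-type inequalities already used for Theorem \ref{th:CLT}. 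Two $\mathcal{O}(1)$ contributions survive: one proportional to the fourth cumulant $\kappa$, and one proportional to $|\vartheta|^2$ issued from the cross terms $\E X_{ij}^2$ in the resolvent identity (these last enter together with the elementwise conjugates $\bar T_n$, $\bar{\tilde T}_n$ and are new relative to all previously treated cases). Using \eqref{eq:fundamental} and the definitions \eqref{eq:def-gamma} of $\gamma$, $\tilde\gamma$ to recombine the surviving terms, one shows
\[
\E\tr\bigl( Q_n(-w) - T_n(-w) \bigr) = \psi_n'(w) + o(1), \qquad \psi_n(w) = -\frac{\kappa}{2}\, w^2 \gamma\tilde\gamma + \frac12 \log\bigl( 1 - |\vartheta|^2 w^2 \gamma\tilde\gamma \bigr),
\]
with $\gamma, \tilde\gamma$ evaluated at $z = -w$, the $o(1)$ being integrable over $[\rho, \infty)$ and $\psi_n(w) \to 0$ as $w \to \infty$ (since $\gamma, \tilde\gamma = \mathcal{O}(w^{-2})$ there). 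Integrating between $\rho$ and $\infty$ and comparing with \eqref{def:bias} then gives $N(\E\mathcal{I}_n(\rho) - V_n(\rho)) \to \psi_n(\rho) = \mathcal{B}_n$.

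\emph{Main obstacle.}
The delicate point is entirely in Part (ii): the expansion of $\E\tr Q_n$ must be carried to the order at which every $\mathcal{O}(1/n)$ correction term — those coming from $\E|X_{ij}|^4$, from $\E X_{ij}^2$, and the diagonal self-interaction terms — either cancels or recombines into exactly the two terms defining $\psi_n$, and the algebraic identities needed for this recombination (through \eqref{eq:fundamental} and \eqref{eq:def-gamma}) are intricate; the $\vartheta$-dependent terms in particular have no counterpart in the Gaussian or $\vartheta = 0$ situations. A secondary difficulty is uniformity in $w$: every estimate has to be run with remainders decaying like $w^{-2}$ so that the passage to the limit under $\int_\rho^\infty$ is legitimate, which forces one to track the $w$-dependence of all constants throughout the cumulant expansion.
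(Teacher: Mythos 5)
Your scaffolding coincides with the paper's: part (i) is disposed of by citing \cite{Dumont10}, and for part (ii) the paper likewise starts from $N\left(\E\mathcal I_n(\rho)-V_n(\rho)\right)=\int_\rho^\infty \tr\left(T(-\omega)-\E Q(-\omega)\right){\bf d}\omega$ and proves exactly the intermediate identity you conjecture, namely that $\tr\left(\E Q(-\omega)-T(-\omega)\right)$ is asymptotically the total $\omega$-derivative of $\psi_n$ (this is \eqref{ident:bias} combined with Step 2, where $R_n(\omega)=\tfrac 12\frac{{\bf d}}{{\bf d}\omega}\left(\omega^2\gamma\tilde\gamma\right)$), with the uniform bounds $|\tr(T-\E Q)|\le K/\omega^2$ and $|\beta_n(\omega)|\le K/\omega^3$ legitimizing the passage under the integral. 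So the reduction and the guessed target are correct.

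The gap is that the whole analytical content of part (ii) is asserted rather than derived, and the tools you propose to derive it with would not run under the paper's hypotheses. Assumption {\bf A-\ref{ass:X}} only gives i.i.d.\ entries with a finite $16^{\mathrm{th}}$ moment: there is no Gaussian integration-by-parts formula for such laws, and Poincar\'e--Nash inequalities are not available (nor are they ``already used for Theorem \ref{th:CLT}'' --- that proof rests on martingale decompositions, Lemma \ref{lm-approx-quadra} and the covariance identity \eqref{eq:identite-fondamentale}); replacing them by a generalized cumulant expansion with remainder control and martingale-type variance bounds is possible in principle, but that is essentially the work being skipped. Concretely, what is missing is the proof that the surviving $\mathcal O(1)$ corrections recombine into precisely $\psi_n'$: the paper gets this by introducing the intermediate matrices $C,\tilde C$ built from $\alpha=n^{-1}\tr D\,\E Q$, proving Lemma \ref{lm:tr(EQ-C)} for $\tr U(C-\E Q)$ (this is where the $\kappa$-term and the $\vartheta$-term with its denominator $1-\omega^2|\vartheta|^2\gamma\tilde\gamma$ appear, the latter through a fixed-point equation for $n^{-1}\tr\E\,QDCU\bar QD$), solving the $2\times 2$ system \eqref{sys-alpha} in $n(\alpha-\delta)$, $n(\tilde\alpha-\tilde\delta)$ whose determinant produces the factor $1-\omega^2\gamma\tilde\gamma$, and only then verifying the derivative identities \eqref{eq:tgamma'}--\eqref{eq:DT2} that turn $\beta_n(\omega)$ into an exact total derivative. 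None of this recombination --- which you yourself flag as the main obstacle --- is carried out or even sketched, so as it stands the proposal is a correct plan in which the crucial statement $\E\tr(Q-T)=\psi_n'+o(1)$, i.e.\ the substance of Proposition \ref{prop:bias}(ii), remains unproved.
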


\begin{rem}
Observe that $T(z) = [ -z(I + \tilde\delta(z) D ) ]^{-1}$ and 
$\tilde T(z) = [ -z(I + \delta(z) \tilde D ) ]^{-1}$ when $A = 0$. 
It is interesting to notice that ${\mathcal B}_n$ coincides in that case with 
$- 0.5 \ \times$ the sum of the two last terms at the right hand side 
(r.h.s.) of \eqref{eq:def-variance}. 
\end{rem} 

Proof of Proposition \ref{prop:bias} is deferred to Section 
\ref{sec:proof-bias}.

\section{Notations and classical results}\label{sec:notations}

\subsection{Further notations}
We denote by $Y$ the $N\times n$ matrix $n^{-1/2} D^{1/2} X \tilde D^{1/2}$; 
by $(\eta_j)$, $(a_j)$ and $(y_j)$ the columns of matrices $\Sigma$, $A$ and
$Y$. Denote by $\Sigma_j$, $A_j$, and $Y_j$ the matrices $\Sigma$, $A$, and
$Y$ where column $j$ has been removed. The associated resolvent 
is $Q_j(z)=(\Sigma_j \Sigma_j^* - z I_N)^{-1}$. We shall often write
$Q$, $Q_j$, $T$ for $Q(z)$, $Q_j(z)$, $T(z)$, etc.  
We denote by $\tilde D_j$ matrix $\tilde D$ where row and column $j$ have
been removed. We also denote by $A_{1:j}$ and $\Sigma_{1:j}$ the $N \times j$ 
matrices
$A_{1:j} = [a_1, \cdots, a_j ]$ and $\Sigma_{1:j} = [\eta_1, \cdots, \eta_j ]$.
Denote by $\mathbb{E}_j$
the conditional expectation with respect to the $\sigma$-field
${\mathcal F}_j$ generated by the vectors $(y_\ell,\, 1\le \ell\le j)$.
By convention, $\mathbb{E}_0=\mathbb{E}$.

We introduce here intermediate quantities of constant use in the rest of the 
paper. For $1\le j\le n$, let:
\begin{eqnarray}
  \tilde b_j(z) &
=&  \frac {-1}{z\left( 1+a_j^* Q_j(z) a_j 
+\frac{\tilde d_j}{n} \tr D Q_j(z)\right)}\ , \nonumber \\
e_j(z) &=& \eta_j^* Q_j(z) \eta_j - 
\Bigl( \frac{\tilde d_j}{n} \tr D Q_j(z) + a_j^* Q_j(z) a_j\Bigr) 
\nonumber\\
&=& y_j^* Q_j(z) y_j -\frac{\tilde d_j}{n} \tr D Q_j(z)   
+ a_j^* Q_j(z) y_j  + y_j^* Q_j(z) a_j . 
\label{eq:def-e} 
\end{eqnarray}

\subsection{Important identities}

Recall the following classical identity for the inverse of a perturbed
matrix (see \cite[Section 0.7.4]{HorJoh90}):
\begin{equation}
\label{eq:mx-inversion}
\left( A + XRY\right)^{-1} = A^{-1} -A^{-1} X\left( R^{-1} +YA^{-1} X\right)^{-1} Y A^{-1}\ .
\end{equation}

\subsubsection*{Identities involving the resolvents} 

The following identity expresses the diagonal elements 
$\tilde q_{jj}(z) = [ \tilde Q(z) ]_{jj}$ of the co-resolvent; 
the two following ones are obtained from \eqref{eq:mx-inversion}.  
\begin{align} 
  \tilde q_{jj}(z) &= \frac{-1}{z(1+\eta_j^* Q_j(z) \eta_j)}\ , 
\label{eq:diag-coresolvent}\\
  Q(z) &= Q_j(z) - \frac{Q_j(z) \eta_j \eta_j^* Q_j(z)}
{1+\eta_j^* Q_j(z) \eta_j} \nonumber \\
&= Q_j(z) +z \tilde q_{jj}(z) Q_j(z) \eta_j \eta_j^* Q_j(z) \, 
\label{eq:woodbury} \\
  Q_j(z) &= Q(z) + \frac{Q(z) \eta_j \eta_j^* Q(z)}{1-\eta_j^* Q(z) \eta_j} 
\label{eq:Qj=f(Q,eta)} \\
1+ \eta_j^* Q_j(z) \eta_j &= \frac{1}{1 - \eta_j^* Q(z) \eta_j} 
\label{eq:1+etaQeta} 
\end{align} 
Notice that
\begin{equation}\label{eq:diff-q-b}
\tilde q_{jj}(z) = \tilde b_j(z) + z \tilde q_{jj}(z) \tilde b_j(z) e_j(z)\ .
\end{equation}
and that $0 < \tilde b_j(-\rho), \tilde q_{jj}(-\rho) < \rho^{-1}$. These 
facts will be repeatedly used in the remainder. 
A useful consequence of \eqref{eq:woodbury} is:
\begin{equation}
\eta_j^* Q(z) = \frac{\eta_j^* Q_j(z)}{1+\eta_j^* Q_j(z) \eta_j} 
= -z \tilde q_{jj}(z) \eta_j^* Q_j(z) \ .
\label{eq:woodbury2}
\end{equation}

\subsubsection*{Identities involving the deterministic equivalents $T$ and 
$\tilde T$} 
Define the $N\times N$ matrix ${\mathcal T}_j(z)$ as
\begin{equation}
\label{eq:T-cal-j}
{\mathcal T}_j(z) = \left(-z(I_N + \tilde \delta(z) D ) 
+ A_j (I_{n-1} + \delta(z) \tilde D_j)^{-1} A_j^* \right)^{-1}\ ,
\end{equation}
where $\delta$ and $\tilde \delta$ are defined in \eqref{eq:fundamental}. 
Notice that matrix ${\mathcal T}_j$ is not obtained in general by
solving the analogue of system \eqref{eq:def-T} where $A$ is replaced
with $A_j$ and when $\tilde D$ is truncated accordingly. This matrix naturally 
pops up when expressing the diagonal elements $\tilde t_{jj}$ of $\tilde T$. 
Indeed, we obtain (see Appendix \ref{expr-T_j}): 

\begin{equation}
\label{eq:t-tilde-jj}
{\tilde t}_{jj}(z) = \frac {-1}
{z\left( 1+a_j^* {\mathcal T}_j(z) a_j + \tilde d_j \delta(z) \right)} \ .
\end{equation}

Let $b$ be a given $N \times 1$ vector. The following identity is also shown
in Appendix \ref{expr-T_j}:
\begin{equation}
\label{eq:T-cal(T)_j}
-z  \tilde{t}_{\ell\ell}(z) a^*_{\ell}{\mathcal T}_{\ell}(z) b 
= \frac{a^*_{\ell} T(z) b}{1 + \tilde d_\ell \delta(z)}   . 
\end{equation}

Thanks to \eqref{eq:mx-inversion}, we also have 
\begin{equation}
\label{eq:Ttilde=f(T)}  
\tilde T(z) = - z^{-1} ( I + \delta(z) \tilde D)^{-1} 
+ z^{-1} ( I + \delta(z) \tilde D)^{-1} A^* T(z) A 
( I + \delta(z) \tilde D)^{-1} . 
\end{equation} 

\subsection{Important estimates}
We gather in this section matrix estimates which will be of constant use in 
the sequel. In all the remainder, $z$ will belong to 
the open negative real axis, and will be fixed to $z = - \rho$ until Section 
\ref{sec:proof-bias}. 

Let $A$ and $B$ be two square matrices. Then 
\begin{equation}
\label{ineq:trace}
\left| \tr (AB) \right| \leq \sqrt{\tr(AA^* )} \sqrt{\tr(BB^* )} 
\end{equation} 
When $B$ is Hermitian non negative, then a consequence of Von Neumann's
trace theorem is 
\begin{equation}
\label{eq:von-neumann-lite}
| \tr (AB) | \quad \le\quad  \|A\|\, \tr B\ .
\end{equation}

The following lemma gives an estimate for a rank-one perturbation of the 
resolvent (\cite[Lemma 6.3]{HLN08} and \cite[Lemma 2.6]{SilBai95}): 
\begin{lemma}
\label{lem:rank1-perturbation} 
The resolvents $Q$ and the perturbed resolvent $Q_j$ satisfy for $z=-\rho$: 
$$
\left|\tr \, A \left(Q - Q_j\right)\right| \le \frac{\|A\|}{\rho}
$$
for any $N\times N$ matrix $A$. 
\end{lemma}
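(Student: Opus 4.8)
The plan is to exploit the rank-one structure $\Sigma\Sigma^* - \Sigma_j\Sigma_j^* = \eta_j\eta_j^*$ via the Woodbury identity \eqref{eq:woodbury}, which already gives
\[
Q - Q_j = -\,\frac{Q_j\eta_j\eta_j^* Q_j}{1+\eta_j^*Q_j\eta_j}\ ,
\]
a matrix of rank at most one. Taking the trace against $A$ and using the cyclic identity $\tr(A\,Q_j\eta_j\eta_j^* Q_j) = \eta_j^* Q_j A Q_j \eta_j$ (valid since $Q_j$ is Hermitian), the claim reduces to the scalar bound
\[
\frac{\bigl|\eta_j^* Q_j A Q_j \eta_j\bigr|}{1 + \eta_j^* Q_j \eta_j}\ \le\ \frac{\|A\|}{\rho}\ .
\]

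Next I would bound the numerator by Cauchy--Schwarz: $\bigl|\eta_j^* Q_j A Q_j \eta_j\bigr| = \bigl|\langle Q_j\eta_j,\, A Q_j\eta_j\rangle\bigr| \le \|A\|\,\|Q_j\eta_j\|^2 = \|A\|\,\eta_j^* Q_j^2 \eta_j$, again using $Q_j=Q_j^*$. For the denominator, since $z=-\rho$ with $\rho>0$ and $\Sigma_j\Sigma_j^*\ge 0$, the matrix $Q_j = (\Sigma_j\Sigma_j^* + \rho I_N)^{-1}$ is Hermitian with all eigenvalues in $(0,\rho^{-1}]$; hence $1+\eta_j^*Q_j\eta_j\ge 1$, and by the spectral mapping ($0<q\le\rho^{-1}$ implies $q^2\le\rho^{-1}q$) one has $Q_j^2\le \rho^{-1}Q_j$ in the Hermitian order. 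Combining these facts yields $\eta_j^* Q_j^2\eta_j \le \rho^{-1}\eta_j^* Q_j\eta_j$ and therefore
\[
\frac{\eta_j^* Q_j^2\eta_j}{1+\eta_j^*Q_j\eta_j}\ \le\ \frac{1}{\rho}\cdot\frac{\eta_j^* Q_j\eta_j}{1 + \eta_j^* Q_j\eta_j}\ \le\ \frac{1}{\rho}\ ,
\]
which gives the lemma.

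There is essentially no serious obstacle here: the estimate is a soft consequence of positivity, and the only points needing a line of justification are the operator inequality $Q_j^2\le\rho^{-1}Q_j$ and the fact that the denominator is bounded below by $1$ (not merely positive). I would also remark that the argument is specific to $z$ on the negative real axis; for a general $z\in\mathbb{C}\setminus\mathbb{R}^+$ one would instead control $\|Q_j(z)\|$ and the relevant denominator through the distance from $z$ to $\mathbb{R}^+$, which is why the statement is formulated only for $z=-\rho$.
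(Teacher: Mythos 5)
Your proof is correct and is essentially the same argument as the one the paper relies on (it cites \cite[Lemma 2.6]{SilBai95} and \cite[Lemma 6.3]{HLN08} rather than reproving): the rank-one update \eqref{eq:woodbury} reduces everything to the quadratic form $\eta_j^* Q_j A Q_j \eta_j/(1+\eta_j^*Q_j\eta_j)$, which is bounded by $\|A\|/\rho$ exactly as you do, using $0< Q_j \le \rho^{-1}I$ at $z=-\rho$. One cosmetic point: the identity $\tr(AQ_j\eta_j\eta_j^*Q_j)=\eta_j^*Q_jAQ_j\eta_j$ needs only cyclicity of the trace; Hermitianity of $Q_j$ is what you actually use in the Cauchy--Schwarz step ($Q_j^*Q_j=Q_j^2$) and in the operator inequality $Q_j^2\le\rho^{-1}Q_j$.
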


The following results describe the asymptotic behaviour of quadratic forms 
based on the resolvent.

\begin{lemma}[Bai and Silverstein, Lemma 2.7 in \cite{BaiSil98}] 
\label{lm-approx-quadra}
Let $\bs{x}=(x_1,\cdots, x_n)$ be a $n \times 1$ vector where the $x_i$
are centered i.i.d.~complex random variables with unit variance. 
Let $M$ be a $n\times n$ deterministic complex matrix. Then for any $p\geq 2$,
there exists a constant $K_p$ for which 
\[
\mathbb{E}|\bs{x}^* M \bs{x} -\tr M|^p
\le K_p \left( \left( \mathbb{E}|x_1|^4 \tr M M^* \right)^{p/2} 
+ \mathbb{E} |x_1|^{2p} \tr (M M^*)^{p/2}
\right)
\]
\end{lemma}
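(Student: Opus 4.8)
The plan is to write the centered quadratic form as a diagonal sum of independent terms plus an off-diagonal bilinear (martingale) term, to estimate the first by Rosenthal's inequality and the second by Burkholder's inequality, and to close the estimate by an induction on $p$. Set
\[
\bs{x}^*M\bs{x}-\tr M \;=\; S_1+S_2 , \qquad
S_1=\sum_i m_{ii}\bigl(|x_i|^2-1\bigr) , \qquad
S_2=\sum_{i\neq j}\bar x_i\, m_{ij}\, x_j ,
\]
and bound $S_1$ and $S_2$ separately, recombining through $|S_1+S_2|^p\le 2^{p-1}(|S_1|^p+|S_2|^p)$. The estimates will repeatedly be translated into the quantities $\tr MM^*$ and $\tr(MM^*)^{p/2}$ by means of the Schatten-type inequalities $\sum_i|m_{ii}|^2\le\tr MM^*$, $\sum_i|m_{ii}|^p\le\tr(MM^*)^{p/2}$, $\sum_{i,j}|m_{ij}|^p\le\tr(MM^*)^{p/2}$ and $\sum_i\bigl((MM^*)_{ii}\bigr)^{p/2}\le\tr(MM^*)^{p/2}$ (valid for $p\ge2$; these are pinching bounds together with the elementary $\ell^p$-versus-Schatten-$p$ inequality), and of the fact that $\E|x_1|^{2q}\ge1$ for every $q\ge1$, a consequence of $\E|x_1|^2=1$.

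Since $S_1$ is a sum of independent centered variables, Rosenthal's inequality gives $\E|S_1|^p\le K_p\bigl[\bigl(\sum_i|m_{ii}|^2\,\E\bigl||x_1|^2-1\bigr|^2\bigr)^{p/2}+\sum_i|m_{ii}|^p\,\E\bigl||x_1|^2-1\bigr|^p\bigr]$, and with $\E\bigl||x_1|^2-1\bigr|^2\le\E|x_1|^4$, $\E\bigl||x_1|^2-1\bigr|^p\le K_p\,\E|x_1|^{2p}$ and the trace bounds above, $\E|S_1|^p$ falls under the claimed right-hand side. For $S_2$ I would use the martingale difference decomposition $S_2=\sum_k\gamma_k$ with respect to $\mathcal F_k=\sigma(x_1,\dots,x_k)$, where $\gamma_k=\bar x_k u_k+x_k w_k$, $u_k=\sum_{j<k}m_{kj}x_j$ and $w_k=\sum_{i<k}m_{ik}\bar x_i$, and apply Burkholder's inequality $\E|S_2|^p\le K_p\bigl[\E\bigl(\sum_k\E_{k-1}|\gamma_k|^2\bigr)^{p/2}+\sum_k\E|\gamma_k|^p\bigr]$. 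The term $\sum_k\E|\gamma_k|^p$ is handled by expanding $\E|\gamma_k|^p$ through the independence of $x_k$ from $(u_k,w_k)$, applying Rosenthal to $\E|u_k|^p$ and $\E|w_k|^p$, and summing with the Schatten-type bounds; it is $\le K_p\,\E|x_1|^{2p}\tr(MM^*)^{p/2}$. For the predictable quadratic variation, a short computation gives $\E_{k-1}|\gamma_k|^2=|u_k|^2+|w_k|^2+2\re(\bar\vartheta\, u_k\bar w_k)\le 2(|u_k|^2+|w_k|^2)$, with $\vartheta=\E x_1^2$ and $|\vartheta|\le1$, so $\sum_k\E_{k-1}|\gamma_k|^2\le 2\bigl(\bs{x}^*L^*L\bs{x}+\|U^{T}\bar{\bs{x}}\|^2\bigr)$, where $L$ and $U$ are the strictly lower- and upper-triangular parts of $M$; each of these is a nonnegative quadratic form in i.i.d.\ centered unit-variance entries with expectation at most $\tr MM^*$. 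Writing $\E(\bs{x}^*L^*L\bs{x})^{p/2}\le K_p\bigl[(\tr L^*L)^{p/2}+\E\bigl|\bs{x}^*L^*L\bs{x}-\tr L^*L\bigr|^{p/2}\bigr]$, the first term is at most $(\E|x_1|^4\tr MM^*)^{p/2}$, while the second — for $p\ge4$ — is estimated by the induction hypothesis (the lemma at exponent $p/2$ applied to $\bs{x}$ and $L^*L$), and likewise for the $U$-term.

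The induction is run dyadically. The base range $2\le p\le4$ is treated directly: the case $p=2$ by expanding $\E|\bs{x}^*M\bs{x}-\tr M|^2$ and bounding it by $K\,\E|x_1|^4\tr MM^*$, and the range $2<p<4$ by Jensen's inequality, which reduces the fluctuation term above (of exponent $p/2<2$) to the $p=2$ case; for $p\in(2^k,2^{k+1}]$ with $k\ge2$, the estimate at exponent $p/2\in(2^{k-1},2^k]$ is then available. The step I expect to be the main obstacle is this quadratic-variation term for large $p$: applying the induction hypothesis to $L^*L$ yields a summand $\E|x_1|^p\,\tr(L^*L)^{p/2}$, and absorbing it into $\E|x_1|^{2p}\,\tr(MM^*)^{p/2}$ requires $\tr(L^*L)^{p/2}\le K_p\,\tr(MM^*)^{p/2}$, i.e.\ control of the Schatten-$p$ norm of the triangular part $L$ in terms of that of $M$. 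Unlike diagonal extraction, triangular truncation is not a pinching and is not Schatten-contractive; it is, however, bounded on the Schatten ideal for $1<p<\infty$ (a classical fact, with a $p$-dependent constant that is harmlessly absorbed into $K_p$), and it is precisely this refined inequality that produces the sharp term $\tr(MM^*)^{p/2}$ in the statement — the coarser bound $\tr(L^*L)^{p/2}\le(\tr L^*L)^{p/2}\le(\tr MM^*)^{p/2}$ leads only to $\E|x_1|^p(\tr MM^*)^{p/2}$, which does not fit inside the right-hand side once $x_1$ is heavy-tailed. An alternative route that sidesteps the induction: when $p$ is an even integer one may expand $\E|\bs{x}^*M\bs{x}-\tr M|^p$ combinatorially and use that a monomial in the $x_i$ and $\bar x_i$ has nonzero expectation only if every index occurs at least twice, the $\tr(MM^*)^{p/2}$ term arising directly from the fully self-paired monomials; the martingale argument above is then needed only to reach general real $p$ with the sharp moment $\E|x_1|^{2p}$.
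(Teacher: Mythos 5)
The paper itself does not prove this lemma: it is quoted from Bai and Silverstein \cite{BaiSil98} (Lemma 2.7; see also Lemma B.26 of their book), so there is no internal proof to compare with, and your write-up is essentially a reconstruction of the classical argument behind that citation. Your skeleton is the standard one and is correct as you set it up: split off the diagonal sum $S_1$ and treat it by Rosenthal; write the off-diagonal part as martingale differences $\gamma_k=\bar x_k u_k+x_k w_k$ and apply the Burkholder--Rosenthal inequality; control $\sum_k\E|\gamma_k|^p$ via Rosenthal together with $\sum_k\bigl((MM^*)_{kk}\bigr)^{p/2}\le\tr(MM^*)^{p/2}$ and $\sum_{i,j}|m_{ij}|^p\le\tr(MM^*)^{p/2}$ (both valid for $p\ge2$); compute $\E_{k-1}|\gamma_k|^2=|u_k|^2+|w_k|^2+2\re(\bar\vartheta u_k\bar w_k)\le 2(|u_k|^2+|w_k|^2)$ using $|\vartheta|\le1$; and recognize the predictable quadratic variation as quadratic forms with matrices built from the triangular parts of $M$, to be treated by a dyadic induction on $p$ with the explicit $p=2$ computation and Jensen for $2<p<4$ as base. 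The moment bookkeeping ($\E|x_1|^{2q}\ge1$, $\E|x_1|^p\le\E|x_1|^{2p}$, $\E\bigl||x_1|^2-1\bigr|^p\le K_p\E|x_1|^{2p}$) is also accurate.

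The point you single out is indeed the only delicate one, and your diagnosis is exactly right: closing the induction in this formulation needs $\tr(L^*L)^{p/2}\le K_p\tr(MM^*)^{p/2}$, i.e.\ boundedness of triangular truncation on the Schatten class $S_p$ for $1<p<\infty$, which is not a pinching; the crude bound $\tr(L^*L)^{p/2}\le(\tr L^*L)^{p/2}\le(\tr MM^*)^{p/2}$ only delivers a second term of the form $\E|x_1|^{2p}(\tr MM^*)^{p/2}$, which is strictly weaker than the stated $\E|x_1|^{2p}\tr(MM^*)^{p/2}$ and would be useless in the heavy-tailed/truncation regime where the sharp form matters. Since the Gohberg--Krein/Macaev theorem you invoke is classical with a constant depending only on $p$, your proof is complete modulo that citation (the companion term $(\E|x_1|^4\tr(L^*L)^2)^{p/4}$ is absorbed by the elementary bound $\tr(L^*L)^2\le(\tr MM^*)^2$, as you implicitly use). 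Your alternative for even integer $p$ --- the combinatorial expansion in which only monomials with every index repeated survive, the fully self-paired ones producing the $\tr(MM^*)^{p/2}$ term --- is also sound and is the more elementary way to see the origin of the second term; what each route buys is clear: the expansion avoids operator-theoretic input but is restricted to even $p$, while the martingale/induction route reaches all real $p\ge2$ at the price of the triangular-truncation norm bound (or some equivalent device).
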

\begin{rem}
There are some important consequences of the previous lemma. Let $(M_n)$
be a sequence of $n\times n$ deterministic matrices with bounded spectral
norm and $({\bs x}_n)$ be a sequence of random vectors as in the statement
of Lemma \ref{lm-approx-quadra}. Then for any $p \in [2; 8]$, 
\begin{equation} 
\label{eq:BS-lemma-2} 
\max\left( 
\mathbb{E} \left| \frac{\bs{x}_n^* M_n \bs{x}_n}{n} - 
\frac{\tr M_n}{n} \right|^p, 
\E |e_j|^p \right) 
\leq \frac{K}{n^{p/2}} 
\end{equation}
where $e_j$ is given by \eqref{eq:def-e} (the estimate 
$\mathbb{E}|e_j|^p ={\mathcal O}(n^{-p/2})$ is proven in 
Appendix \ref{bound-e}). 
\end{rem}
\begin{rem}
By replacing $\E|x_1|^{4}$ with $\max_i \E|x_i|^{4}$ and 
$\E|x_1|^{2p}$ with $\max_i \E|x_i|^{2p}$, Lemma \ref{lm-approx-quadra} can 
be extended to the case where elements of vector $\bs{x}$ are independent but 
not necessarily identically distributed \cite[Lemma B.26]{bai-sil-book}. 
Accordingly, the results of this paper remain true when the $X_{ij}$ are 
independent but not necessarily identically distributed, provided 
$\E|X_{ij}|^2 = 1$,  $\E X_{ij}^2 = \vartheta$, and 
$\E|X_{ij}|^4 - 2 - |\vartheta|^2 = \kappa$ for all $i,j$, and 
$\sup_n \max_{i,j} \E|X_{ij}|^{16} < \infty$. 
\end{rem} 

The following theorem is proven is Appendix \ref{anx:th-quadra}: 
\begin{theo}
\label{theo:quadra} 
Assume that the setting of Theorem \ref{th:CLT} holds true. Let $(u_n)$ and 
$(v_n)$ be two sequences of deterministic complex $N\times 1$ vectors bounded 
in the Euclidean norm:
$$
\sup_{n\ge 1} \max\left( \|u_n\|,\|v_n\|\right) <\infty , 
$$
and let $(U_n)$ be a sequence of deterministic $N\times N$ matrices with 
bounded spectral norms: 
$$
\sup_{n\ge 1} \| U_n \| <\infty. 
$$
Then,
\begin{enumerate}
\item
\label{quadra-sum-bounded} 
There exists a constant $K$ for which 
$$ 
\sum_{j=1}^n \mathbb{E} |u_n^* Q_j a_j|^2\ \le\ K . 
$$
\item 
\label{speed(T-Q)} 
The following holds true: 
$$
\left| \frac 1n \tr U( T - \E Q)\right| \le \frac{K}{n}\ .
$$
\item 
\label{conv-quadra} For every $p \in [ 1, 2]$, there exists a constant 
$K_p$ such that:
$$
\mathbb{E}\left| u_n^* (Q - T) v_n \right|^{2p} \le \frac{K_p}{n^p}\ .
$$
\item 
\label{eq-u(Qj-Tj)v} 
For every $p \in [1,2]$, there exists a constant $K_p$ such that:
\[
\E\left| u_n^* (Q_j - {\mathcal T}_j) v_n \right|^{2p} 
\le \frac{K_p}{n^p}\ .
\] 
\item 
\label{var(trUQ)} 
There exists a constant $K$ such that 
\[
\E\left| \tr U(Q - \E Q) \right|^2 < K .
\]
\end{enumerate}
\end{theo}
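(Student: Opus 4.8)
The plan is to prove the five assertions essentially in the stated order, using three ingredients: the martingale decomposition along the filtration $({\mathcal F}_j)$, the rank-one perturbation identities \eqref{eq:woodbury}--\eqref{eq:woodbury2} together with Lemma \ref{lem:rank1-perturbation}, and the Bai--Silverstein quadratic-form estimate \eqref{eq:BS-lemma-2}. Throughout, one should keep in mind that $z=-\rho$ with $\rho>0$, so that $\|Q\|\le\rho^{-1}$, $\|Q_j\|\le\rho^{-1}$, $0<\tilde b_j,\tilde q_{jj}<\rho^{-1}$, and $\|T\|,\|\tilde T\|$ are bounded (this last fact coming from the first-order theory of \cite{HLN07}); all these bounds are deterministic and uniform in $n$.

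\emph{Item \eqref{quadra-sum-bounded}.} Write $\Sigma_{1:j}=[\eta_1,\dots,\eta_j]$ and note that $Q_j$ is independent of $\eta_j=a_j+y_j$. Conditioning on $Q_j$, one has $\E|u_n^*Q_j a_j|^2 = |u_n^*Q_j a_j|^2$ deterministically in $a_j$, while the total contribution is controlled by observing that $\sum_j a_j a_j^* = A A^* $, whose norm is bounded by $\amax^2$ (Assumption A-\ref{ass:A}). More carefully, one inserts $\eta_j = a_j+y_j$ and uses that $\sum_{j}\eta_j\eta_j^* = \Sigma\Sigma^*$; combining this with the resolvent identity $\Sigma\Sigma^* Q = I + zQ$ and with the crude bound on $y_j^*Q_j y_j$ from \eqref{eq:BS-lemma-2}, the sum telescopes up to an $O(1)$ remainder. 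This is a standard ``$\ell^2$-summability of bilinear forms against the columns'' estimate; the only place where non-centeredness enters is that one must split $\eta_j$ and treat the $a_j$-part via $\|AA^*\|\le\amax^2$ rather than via a concentration argument.

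\emph{Items \eqref{speed(T-Q)} and \eqref{conv-quadra}.} These are the heart of the matter and I would prove them together. For \eqref{conv-quadra}, write $u_n^*(Q-\E Q)v_n$ as a sum of martingale differences $\gamma_j := (\E_j-\E_{j-1})u_n^*Qv_n = (\E_j-\E_{j-1})u_n^*(Q-Q_j)v_n$, using \eqref{eq:woodbury} to get $u_n^*(Q-Q_j)v_n = z\tilde q_{jj}\,(u_n^*Q_j\eta_j)(\eta_j^*Q_j v_n)$. Burkholder's inequality then reduces the $2p$-th moment to $\sum_j \E|\gamma_j|^{2p}$, and each $|\gamma_j|$ is bounded by $O(|u_n^*Q_j\eta_j|\,|\eta_j^*Q_j v_n|)$; splitting $\eta_j=a_j+y_j$, the $y_j$-terms are handled by \eqref{eq:BS-lemma-2} and the $a_j$-terms by item \eqref{quadra-sum-bounded}, yielding $\sum_j\E|\gamma_j|^{2p} = O(n^{-p})$ after using $p\le 2$. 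This gives the concentration $\E|u_n^*(Q-\E Q)v_n|^{2p}=O(n^{-p})$. To pass from $\E Q$ to $T$, one writes the difference $u_n^*(\E Q - T)v_n$ and uses the standard ``guessed equation'' technique: express $\E Q$ through $\E[\sum_j Q\eta_j\eta_j^*]$-type identities, compare with the fixed-point equations \eqref{eq:fundamental}--\eqref{eq:def-T} defining $T$, and show the discrepancy is $O(1/n)$ in Euclidean-bilinear-form sense. Item \eqref{speed(T-Q)} is then the trace version, obtained by choosing $U=\sum$ of rank-one pieces or, more efficiently, by the same martingale-plus-fixed-point scheme applied directly to $n^{-1}\tr U Q$; here one crucially uses \eqref{eq:von-neumann-lite} and Lemma \ref{lem:rank1-perturbation} to keep all matrix factors under control, and \eqref{eq:conv-bilinear-forms} from \cite{HLNV10pre} can be invoked as the $o(1)$ starting point that is then upgraded to the rate $O(1/n)$.

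\emph{Items \eqref{eq-u(Qj-Tj)v} and \eqref{var(trUQ)}.} Item \eqref{eq-u(Qj-Tj)v} follows from \eqref{conv-quadra} by a perturbation argument: $u_n^*(Q_j-{\mathcal T}_j)v_n = u_n^*(Q_j-Q)v_n + u_n^*(Q-T)v_n + u_n^*(T-{\mathcal T}_j)v_n$; the first term is $O(1/n)$ in $2p$-th mean by \eqref{eq:Qj=f(Q,eta)} together with the bounds already obtained, the second is \eqref{conv-quadra}, and the third is a deterministic $O(1/n)$ estimate comparing ${\mathcal T}_j$ (defined in \eqref{eq:T-cal-j}) with $T$ — this uses \eqref{eq:mx-inversion} on the rank-one-in-the-$A$-block difference between $A_jA_j^*$-type terms and $AA^*$, plus $|\delta-\delta|$-continuity, and is where the spectral-norm bound on $A_n$ is again essential. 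Finally item \eqref{var(trUQ)} is the $p=1$ trace analogue of the concentration step in \eqref{conv-quadra}: martingale-difference decomposition of $\tr U(Q-\E Q)$, with each difference bounded via Lemma \ref{lem:rank1-perturbation} by $O(1)$ and, after the quadratic-form fluctuation is extracted, by $O(n^{-1/2})$, so that $\sum_j\E|\cdot|^2 = O(1)$.

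The main obstacle I anticipate is the passage from $\E Q$ to $T$ in bilinear form (item \eqref{speed(T-Q)} / the second half of \eqref{conv-quadra}): unlike the centered case, one cannot simply symmetrize, and the bilinear forms $u_n^*Q_j a_j$ involving the deterministic columns of $A_n$ must be controlled so that the bound depends only on $\amax$, not on any structure of $A_n$; threading the fixed-point equations \eqref{eq:fundamental} and the auxiliary matrices ${\mathcal T}_j$, \eqref{eq:t-tilde-jj}, \eqref{eq:T-cal(T)_j} through this comparison — while keeping every error at the $O(1/n)$ level — is the delicate part, and it is precisely the difficulty flagged in the introduction as specific to the non-centered model.
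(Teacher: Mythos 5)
Your outline of Items~\eqref{quadra-sum-bounded}--\eqref{conv-quadra} is reasonable in spirit, but note that the paper itself does not prove them here: it defers them entirely to \cite{HLNV10pre}, so there is no argument in this paper to compare your sketch against. Your Item~\eqref{var(trUQ)} sketch (martingale decomposition of $\tr U(Q-\E Q)$, each increment $O(n^{-1/2})$ in $L^2$ via Lemma~\ref{lem:rank1-perturbation} and the quadratic-form concentration) does match the paper's proof.

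There is, however, a genuine gap in your Item~\eqref{eq-u(Qj-Tj)v}. You propose the decomposition
\[
u_n^*(Q_j - {\mathcal T}_j)v_n = u_n^*(Q_j - Q)v_n + u_n^*(Q-T)v_n + u_n^*(T-{\mathcal T}_j)v_n ,
\]
and claim the first term is $O(1/n)$ in $2p$-th mean. This fails. By \eqref{eq:Qj=f(Q,eta)} and \eqref{eq:woodbury2}, $u^*(Q_j-Q)v = \rho\tilde q_{jj}\,(u^*Q_j\eta_j)(\eta_j^*Q_jv)$, and after the split $\eta_j = a_j + y_j$ the purely deterministic (given $Q_j$) contribution $\rho\tilde q_{jj}(u^*Q_j a_j)(a_j^*Q_jv)$ is in general $O(1)$, not $O(n^{-1/2})$. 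Item~\eqref{quadra-sum-bounded} only controls $\sum_j\E|u^*Q_ja_j|^2$, which leaves each individual summand possibly of order one (e.g.\ if $A$ has a single nonzero column the term $j=1$ is not small at all). So for a fixed $j$ your decomposition does not deliver $\E|u_n^*(Q_j-{\mathcal T}_j)v_n|^{2p}\leq K_p n^{-p}$.

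The paper avoids this by \emph{not} passing through $Q$. It introduces the solution $(\delta_j,\tilde\delta_j)$ of the fixed-point system \eqref{eq:fundamental} with $(A,\tilde D)$ replaced by $(A_j,\tilde D_j)$, and lets $T_j$ be the associated matrix from \eqref{eq:def-T}. Item~\eqref{conv-quadra} applied to the reduced model $\Sigma_j$ then directly gives $\E|u^*(Q_j - T_j)v|^{2p}\leq K_p n^{-p}$; the remaining step is the purely deterministic estimate $|u^*({\mathcal T}_j - T_j)v|\leq K/n$, obtained from $|\delta-\delta_j|=O(1/n)$ and $|\tilde\delta-\tilde\delta_j|=O(1/n)$ (the latter via Lemma~\ref{lem:rank1-perturbation}, Item~\eqref{speed(T-Q)}, and a partitioned-inverse argument for $\tilde Q$ versus $\tilde Q_j$). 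In short: the right comparison for $Q_j$ is its own deterministic equivalent $T_j$, and ${\mathcal T}_j$ is then shown to be an $O(1/n)$ perturbation of $T_j$; comparing $Q_j$ with $Q$ loses a full order because of the deterministic column $a_j$.
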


The following results stem from Lemma \ref{lm-approx-quadra} and Theorem 
\ref{theo:quadra} and will be of constant use in the sequel. 
Recalling \eqref{eq:diff-q-b} and \eqref{eq:BS-lemma-2} along with the bounds
on $\tilde q_{jj}$ and $\tilde b_j$, we have for any $p \in [2,8]$ 
\begin{equation}
\label{eq-(q-b)}
\E \left| \tilde q_{jj} - \tilde b_j \right|^p \leq \frac{K}{n^{p/2}} . 
\end{equation} 
Of course, the 
counterpart of Theorem \ref{theo:quadra} for the co-resolvent $\tilde Q$ and 
matrix $\tilde T$ holds true. In particular, taking the vectors $u_n$ and 
$v_n$ as the $j$th canonical vector of $\C^n$ yields the following estimate
for any $p \in [2,4]$: 
\begin{equation}
\label{eq:estimate-t-tilde}
\E \left| \tilde q_{jj} - \tilde t_{jj} \right|^p \leq \frac{K}{n^{p/2}} .
\end{equation}

The following two lemmas, proven in Appendices \ref{app:estimates-proof} and 
\ref{app:bounds-proof}, provide some important bounds:

\begin{lemma}
\label{lm:estimates} 
Assume that the setting of Theorem \ref{th:CLT} holds true. 
Then, the following quantities satisfy:
\begin{gather*}
 {\textstyle \frac{\dmin}{\rho + \dmax \dtmax + \amax^2}} \le 
  \delta_n \le  {\textstyle \frac{\lplus \dmax}{\rho }}\ , \quad 
{\textstyle \frac{\dtmin}{\rho + \lplus \dmax \dtmax  + \amax^2}}
\le \tilde{\delta}_n \le {\textstyle \frac{\dtmax}{\rho}}\ ,\\
{\textstyle \frac{\dmin}{(\rho + \dmax \dtmax + \amax^2)^2}} \le 
  \frac 1n \tr D T^2 \le  
{\textstyle \frac{\lplus \dmax}{\rho^2}}\ , \quad 
{\textstyle \frac{\dtmin}{(\rho + \lplus \dmax \dtmax  + \amax^2)^2}}
\le \frac 1n \tr \tilde D \tilde T^2 \le 
{\textstyle \frac{\dtmax}{\rho^2}}\ ,\\
{\textstyle \frac{\dmin^2}{\lplus (\rho +\dmax \dtmax +\amax^2)^2}} 
\le \gamma_n  \le 
{\textstyle \frac{\lplus \dmax^2}{\rho^2}}\ , \quad 
{\textstyle \frac{\dtmin^2}
           {\left(\rho +\lplus \dmax \dtmax  + \amax^2 \right)^2 }} 
\le \tilde{\gamma}_n \le
{\textstyle \frac{\dtmax^2}{\rho^2}} \\ 
{\textstyle \frac{\dmin^2}{\lplus (\rho +\dmax \dtmax +\amax^2)^2}} 
\le \frac 1n \sum_{i=1}^N d_i^2 t_{ii}^2  \le 
{\textstyle \frac{\lplus \dmax^2}{\rho^2}}, \quad 
{\textstyle \frac{\dtmin^2}
        {\left(\rho +\lplus \dmax \dtmax  + \amax^2 \right)^2 }} 
\le \frac 1n \sum_{j=1}^n \tilde d_j^2 \tilde t_{jj}^2 \le 
{\textstyle \frac{\dtmax^2}{\rho^2}} \ .
\end{gather*}
\end{lemma}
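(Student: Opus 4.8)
The plan is to deduce all eight estimates from two-sided operator (Loewner) bounds on the matrices $T = T(-\rho)$ and $\tilde T = \tilde T(-\rho)$, which themselves follow at once from the explicit expressions \eqref{eq:def-T}. Fix $z = -\rho$ with $\rho>0$; since the solution of \eqref{eq:fundamental} is a pair of Stieltjes transforms of nonnegative measures, $\delta$ and $\tilde\delta$ are real and nonnegative and $T,\tilde T$ are Hermitian positive definite. As $\delta,\tilde\delta\ge0$ and $D,\tilde D\ge0$, we have $I+\delta\tilde D\ge I$, hence $\|(I+\delta\tilde D)^{-1}\|\le1$ and $0\le A(I+\delta\tilde D)^{-1}A^*\le\amax^2 I$. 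Inserting this into $T^{-1}=\rho(I+\tilde\delta D)+A(I+\delta\tilde D)^{-1}A^*$ gives
$$
\rho I\ \le\ T^{-1}\ \le\ \bigl(\rho(1+\tilde\delta\,\dmax)+\amax^2\bigr)I ,
$$
so that $\|T\|\le\rho^{-1}$ and $T\ge\bigl(\rho(1+\tilde\delta\,\dmax)+\amax^2\bigr)^{-1}I$; symmetrically $\|\tilde T\|\le\rho^{-1}$ and $\tilde T\ge\bigl(\rho(1+\delta\,\dtmax)+\amax^2\bigr)^{-1}I$.

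Next I would establish the bounds on $\delta$ and $\tilde\delta$, doing the upper bounds first. Using $\delta=n^{-1}\tr(DT)$ (the first line of \eqref{eq:fundamental} at $z=-\rho$) together with \eqref{eq:von-neumann-lite}, $\|T\|\le\rho^{-1}$, $\tr D\le N\dmax$, and $N/n\le\lplus$, one gets $\delta\le\lplus\dmax/\rho$; likewise $\tilde\delta\le\dtmax/\rho$ (now $\tr\tilde D\le n\dtmax$, so no $\lplus$ appears). Feeding these two inequalities into the Loewner lower bounds just obtained removes the dependence on $\delta,\tilde\delta$ and yields $T\ge(\rho+\dmax\dtmax+\amax^2)^{-1}I$ and $\tilde T\ge(\rho+\lplus\dmax\dtmax+\amax^2)^{-1}I$. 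The lower bounds on $\delta,\tilde\delta$ then come from $\delta=n^{-1}\tr(DT)\ge\lambda_{\min}(T)\,n^{-1}\tr D$ and $n^{-1}\tr D\ge\dmin$ (Assumption~A-\ref{ass:D}), and symmetrically for $\tilde\delta$.

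For the remaining six quantities I would use the identities $\tr(DT)=n\delta$, $\tr(\tilde D\tilde T)=n\tilde\delta$, the Loewner bounds above, the inequality \eqref{eq:von-neumann-lite}, and the elementary Cauchy--Schwarz bound $(\tr M)^2\le(\dim M)\,\tr(M^2)$ for Hermitian $M\ge0$. For $n^{-1}\tr(DT^2)$: writing $\tr(DT^2)=\tr\bigl((T^{1/2}DT^{1/2})\,T\bigr)$ one sandwiches $\lambda_{\min}(T)\,\tr(DT)\le\tr(DT^2)\le\|T\|\,\tr(DT)$, then substitutes $\tr(DT)=n\delta$ and the bounds on $\delta,\|T\|,\lambda_{\min}(T)$. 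For $\gamma=n^{-1}\tr(DTDT)=n^{-1}\|D^{1/2}TD^{1/2}\|_{\mathrm{F}}^2$: the upper bound is $\|D^{1/2}TD^{1/2}\|_{\mathrm{F}}^2\le\|D^{1/2}TD^{1/2}\|\,\tr(D^{1/2}TD^{1/2})\le(\dmax/\rho)\,n\delta$, and the lower bound is $\|D^{1/2}TD^{1/2}\|_{\mathrm{F}}^2\ge N^{-1}(\tr(DT))^2=N^{-1}(n\delta)^2$, combined with $n/N\ge\lplus^{-1}$ and the lower bound on $\delta$. For $n^{-1}\sum_i d_i^2 t_{ii}^2$: the upper bound uses $0<t_{ii}\le\|T\|\le\rho^{-1}$ and $\sum_i d_i^2\le\dmax\tr D\le N\dmax^2$, the lower bound uses $\bigl(\sum_i d_i t_{ii}\bigr)^2\le N\sum_i d_i^2 t_{ii}^2$ with $\sum_i d_i t_{ii}=\tr(DT)=n\delta$. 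The three tilded quantities $n^{-1}\tr(\tilde D\tilde T^2)$, $\tilde\gamma$, $n^{-1}\sum_j\tilde d_j^2\tilde t_{jj}^2$ are handled verbatim with $n,\tilde D,\tilde T,\tilde\delta$ in place of $N,D,T,\delta$; replacing $N$ by $n$ in the Cauchy--Schwarz step is exactly why the $\lplus$ factor drops out of $\tilde\gamma$ and $n^{-1}\sum_j\tilde d_j^2\tilde t_{jj}^2$. There is no genuine obstacle here; the only two points needing a little care are the bootstrapping order (the upper bounds on $\delta,\tilde\delta$ must precede the Loewner lower bounds on $T,\tilde T$ because the two fixed-point variables are coupled) and checking that each trace manipulation is applied to a bona fide Hermitian nonnegative matrix, so that \eqref{eq:von-neumann-lite} and $(\tr M)^2\le(\dim M)\,\tr(M^2)$ apply.
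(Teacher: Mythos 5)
Your proof is correct and follows essentially the same route as the paper: both arguments reduce to the two‑sided spectral bounds $\rho I\le T^{-1}\le(\rho(1+\tilde\delta\dmax)+\amax^2)I$ (and its tilded analogue), the upper bounds $\delta\le\lplus\dmax/\rho$, $\tilde\delta\le\dtmax/\rho$, and then von Neumann/Cauchy--Schwarz trace manipulations, applied in exactly the bootstrapping order you identify. The only cosmetic difference is that you state the Loewner bounds on $T,\tilde T$ up front and read everything off them, whereas the paper writes each trace estimate directly in the form $\tr D\le\tr(DT)\|T^{-1}\|$, $(N^{-1}\tr D)^2\le N^{-1}\tr(TDTD)\|T^{-1}\|^2$, etc.; the content is the same.
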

\begin{lemma}
\label{lm:var-bounds} 
Assume that the setting of Theorem \ref{th:CLT} holds true. Then 
\[
\sup_n \frac 1n \tr D^{1/2} T A(I + \delta \tilde D)^{-2} 
\tilde D A^* T D^{\frac 12} \, < \, 1 .
\]
Moreover, the sequence $(\Delta_n)$ as defined in Theorem \ref{th:CLT} 
satisfies 
\[
\Delta_n \geq 
\frac{\rho}{n\delta} \tr D T^2
\frac{\rho}{n\tilde \delta} \tr \tilde D \tilde T^2 
\] 
and 
$$
\liminf_n \Delta_n > 0\ .
$$
\end{lemma}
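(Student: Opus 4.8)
\textbf{Proof proposal for Lemma \ref{lm:var-bounds}.}

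The plan is to prove the three claims in sequence, reducing each to the spectral-norm bounds on $A$, $D$, $\tilde D$ collected in Assumptions \textbf{A-\ref{ass:A}}--\textbf{A-\ref{ass:D}} and to the estimates of Lemma \ref{lm:estimates}. For the first bound, write $B = D^{1/2} T A (I + \delta \tilde D)^{-1}$, so that the quantity of interest equals $n^{-1} \tr B (I + \delta \tilde D)^{-1} \tilde D (I + \delta \tilde D)^{-1} B^*$. Since $(I+\delta\tilde D)^{-1}\tilde D(I+\delta\tilde D)^{-1} \le \tilde D (I+\delta \tilde D)^{-1} $ as Hermitian matrices (using $\delta\tilde D \ge 0$), it suffices to bound $n^{-1}\tr B \tilde D (I+\delta\tilde D)^{-1} B^* = n^{-1}\tr (I+\delta\tilde D)^{-1}\tilde D A^* T D T A$. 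The key algebraic observation is the identity $A(I+\delta\tilde D)^{-1}A^* = z(I+\tilde\delta D) + T^{-1}$ coming from \eqref{eq:def-T} with $z = -\rho$; this lets one express $A(I+\delta\tilde D)^{-1}\tilde D(I+\delta\tilde D)^{-1}A^*$, which is $-\partial_\delta$ of $A(I+\delta\tilde D)^{-1}A^*$, and after multiplying by $T$ on both sides one recognizes the combination as $\rho \tilde\delta D$-type terms plus $I - (\text{something positive})$; the strict inequality $<1$ then follows because the subtracted piece is bounded below by a positive constant uniformly in $n$, by the lower bounds of Lemma \ref{lm:estimates} on $\delta$, $\tilde\delta$ and on $n^{-1}\tr DT^2$.

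For the lower bound on $\Delta_n$, recall $\Delta_n = (1 - c_n)^2 - \rho^2\gamma\tilde\gamma$ where $c_n = n^{-1}\tr D^{1/2} T A(I+\delta\tilde D)^{-2}\tilde D A^* T D^{1/2}$ is the quantity just shown to be $<1$ (and $\ge 0$ since it is a trace of a Hermitian nonnegative matrix). The strategy is to show $1 - c_n = \rho^2 \gamma\tilde\gamma + \rho n^{-1}\delta^{-1}\tr DT^2 \cdot (\text{stuff})$ — more precisely, to establish the exact identity $1 - c_n = \frac{\rho}{n\delta}\tr DT^2 \cdot \big( \text{term} \big)$ together with $\text{term} \ge \frac{\rho}{n\tilde\delta}\tr\tilde D\tilde T^2$ — which then gives $(1-c_n)^2 - \rho^2\gamma\tilde\gamma \ge \big(\frac{\rho}{n\delta}\tr DT^2\big)\big(\frac{\rho}{n\tilde\delta}\tr\tilde D\tilde T^2\big)$ after one checks that $\rho^2\gamma\tilde\gamma$ is exactly the cross term one is left with. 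Concretely I would manipulate $1 - c_n$ using \eqref{eq:def-T}: substituting $T^{-1} = -z(I+\tilde\delta D) + A(I+\delta\tilde D)^{-1}A^*$ into $\tr T A(I+\delta\tilde D)^{-2}\tilde D A^* T$ and using the definition of $\delta$ as $n^{-1}\tr DT$ to telescope the expression. The definitions of $\gamma$ and $\tilde\gamma$ in \eqref{eq:def-gamma} and of $\tilde T$ via \eqref{eq:Ttilde=f(T)} should make the cross-terms line up. Then $\liminf_n \Delta_n > 0$ is immediate from this lower bound combined with the uniform lower bounds $\frac 1n\tr DT^2 \ge \dmin(\rho+\dmax\dtmax+\amax^2)^{-2}$, $\frac 1n\tr\tilde D\tilde T^2 \ge \dtmin(\rho+\lplus\dmax\dtmax+\amax^2)^{-2}$, and the upper bounds $\delta \le \lplus\dmax/\rho$, $\tilde\delta\le\dtmax/\rho$ from Lemma \ref{lm:estimates}, all of which are uniform in $n$.

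The main obstacle I anticipate is the bookkeeping in the second step: verifying that $1 - c_n$ decomposes \emph{exactly} into the announced product-plus-cross-term form, rather than merely being bounded below by it, requires carefully expanding $A(I+\delta\tilde D)^{-2}\tilde D A^*$ in terms of $T^{-1}$ and tracking which pieces contribute $\rho^2\gamma\tilde\gamma$ versus which contribute the leftover product $\frac\rho{n\delta}\tr DT^2 \cdot \frac\rho{n\tilde\delta}\tr\tilde D\tilde T^2$. The co-resolvent identity \eqref{eq:Ttilde=f(T)}, namely $\tilde T = -z^{-1}(I+\delta\tilde D)^{-1} + z^{-1}(I+\delta\tilde D)^{-1}A^*TA(I+\delta\tilde D)^{-1}$, is what converts traces involving $A^*TA$ back into traces of $\tilde D\tilde T$ and $\tilde D\tilde T^2$, and getting the $z = -\rho$ sign conventions right throughout will be the delicate part. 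Everything else — nonnegativity of the relevant traces, the uniform bounds — is routine given Lemma \ref{lm:estimates}.
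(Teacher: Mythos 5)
Your overall plan correctly identifies the two key ingredients (the algebraic identity $A(I+\delta\tilde D)^{-1}A^* = T^{-1} - \rho(I+\tilde\delta D)$ from \eqref{eq:def-T}, and the uniform bounds of Lemma \ref{lm:estimates}), but the specific algebraic route you sketch would not close either half of the argument.

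For the first claim, the inequality you choose, $(I+\delta\tilde D)^{-1}\tilde D(I+\delta\tilde D)^{-1} \le \tilde D(I+\delta\tilde D)^{-1}$, leaves you with $\frac 1n\tr DTA\tilde D(I+\delta\tilde D)^{-1}A^*T$, in which the stray $\tilde D$ prevents a clean application of the $T^{-1}$ identity (one gets an uncontrolled term $\frac{1}{n\delta}\tr DTAA^*T$ upon expanding). The paper instead uses $\tilde D(I+\delta\tilde D)^{-1} = \delta^{-1}\bigl(I - (I+\delta\tilde D)^{-1}\bigr) \le \delta^{-1}I$, i.e.\ $\tilde D(I+\delta\tilde D)^{-2} \le \delta^{-1}(I+\delta\tilde D)^{-1}$, so that $c_n \le \frac{1}{n\delta}\tr DTA(I+\delta\tilde D)^{-1}A^*T = 1 - \frac{\rho}{n\delta}\tr DT^2 - \rho\frac{\tilde\delta}{\delta}\gamma$, and the last two terms are uniformly positive by Lemma \ref{lm:estimates}. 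Your derivative heuristic ($-\partial_\delta$ of $A(I+\delta\tilde D)^{-1}A^*$) does not substitute for this: it produces no clean cancellation. (Also a notational slip: for your displayed trace to equal $c_n$ you need $B = D^{1/2}TA$, not $B = D^{1/2}TA(I+\delta\tilde D)^{-1}$.)

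For the lower bound on $\Delta_n$, you propose an exact factorization $1 - c_n = \frac{\rho}{n\delta}\tr DT^2\cdot(\text{term})$; that is not what happens, and aiming for it would fail. The crucial missing idea is the \emph{symmetry} $c_n = \frac 1n\tr \tilde D^{1/2}\tilde TA^*(I+\tilde\delta D)^{-2}DA\tilde T\tilde D^{1/2}$, which the paper proves using the push-through identity $(I+UV)^{-1}U = U(I+VU)^{-1}$ to deduce $TA(I+\delta\tilde D)^{-1} = (I+\tilde\delta D)^{-1}A\tilde T$. This yields \emph{two} lower bounds
\[
1 - c_n \ge \frac{\rho}{n\delta}\tr DT^2 + \rho\frac{\tilde\delta}{\delta}\gamma,
\qquad
1 - c_n \ge \frac{\rho}{n\tilde\delta}\tr \tilde D\tilde T^2 + \rho\frac{\delta}{\tilde\delta}\tilde\gamma,
\]
and multiplying them, the product of the cross-terms is exactly $\rho^2\gamma\tilde\gamma$, which cancels against the subtrahend in $\Delta_n$ leaving (at least) $\frac{\rho}{n\delta}\tr DT^2\cdot\frac{\rho}{n\tilde\delta}\tr\tilde D\tilde T^2$. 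Without the $c_n$/$\tilde c_n$ symmetry you have only one of the two factors, and \eqref{eq:Ttilde=f(T)} alone does not generate the second.
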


\subsection{Other important results}

The main result we shall rely on to establish the Central Limit Theorem
is the following CLT for martingales:
\begin{theo}[CLT for martingales, Th. 35.12 in \cite{Bil95}] 
\label{th:clt-martingales}
Let $\gamma^{(n)}_1, \gamma^{(n)}_{2}, \ldots, \gamma^{(n)}_n$ be a 
martingale difference sequence with respect to the increasing filtration 
${\mathcal F}^{(n)}_1, \ldots, {\mathcal F}^{(n)}_n$. 
Assume that there exists a sequence of real positive numbers 
${\Upsilon}_n^2$ such that 
\begin{equation}
\label{eq:clt-martingale} 
\frac{1}{\Upsilon_n^2}
\sum_{j=1}^n \E_{j-1} {\gamma_j^{(n)}}^2 \cvgP{n\to\infty} 1 \ .
\end{equation} 
Assume further that the Lyapounov condition (\cite[Section 27]{Bil95})
holds true:
$$
\exists \delta > 0, \quad
\frac{1}{\Upsilon_n^{2(1+\delta)}}
\sum_{j=1}^n \E\left| {\gamma_j^{(n)}} \right|^{2+\delta} 
\xrightarrow[n\rightarrow \infty]{} 0\ .
$$
Then $\Upsilon_n^{-1} \sum_{j=1}^n \gamma^{(n)}_j $ converges in 
distribution to 
${\mathcal N}(0,1)$. 
\end{theo}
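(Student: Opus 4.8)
\emph{Plan.} This is the martingale central limit theorem of Billingsley (\cite{Bil95}, Th.~35.12), and I would prove it by the classical characteristic-function argument in the form due to McLeish. Fix $t\in\R$ and write $\ii=\sqrt{-1}$. Since in the intended application $\Upsilon_n^2=\Theta_n$ obeys \eqref{eq:estimate-variance}, I would first reduce to the case where $(\Upsilon_n^2)$ is bounded and bounded away from $0$, then pass to the normalized array $X_{nj}:=\gamma^{(n)}_j/\Upsilon_n$, $S_n:=\sum_{j=1}^nX_{nj}$: this is still a martingale difference sequence for $({\mathcal F}^{(n)}_j)$, with $W_n:=\sum_{j=1}^n\E_{j-1}X_{nj}^2\cvgP{n}1$, and the Lyapounov hypothesis becomes $L_n:=\sum_{j=1}^n\E|X_{nj}|^{2+\delta}\to0$ (here $\sup_n\Upsilon_n^2<\infty$ is used). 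By the L\'evy continuity theorem it suffices to prove $\E e^{\ii tS_n}\to e^{-t^2/2}$ for every $t\in\R$. Note that $L_n\to0$ already gives $\max_{j\le n}|X_{nj}|\cvgP{n}0$ via Markov's inequality.

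\emph{Step 1: truncation.} I would pick $\varepsilon_n\downarrow0$ slowly enough that $\varepsilon_n^{-(2+\delta)}L_n\to0$ and replace $X_{nj}$ by the recentered truncation $\check X_{nj}:=X_{nj}\mathbf 1_{\{|X_{nj}|\le\varepsilon_n\}}-\E_{j-1}[X_{nj}\mathbf 1_{\{|X_{nj}|\le\varepsilon_n\}}]$, which is still a martingale difference sequence and satisfies $|\check X_{nj}|\le2\varepsilon_n$. Using $\E_{j-1}X_{nj}=0$ together with the Lindeberg-type bound $\sum_j\E[X_{nj}^2\mathbf 1_{\{|X_{nj}|>\varepsilon_n\}}]\le\varepsilon_n^{-\delta}L_n\to0$ (Markov), one gets $\sum_j\E|X_{nj}-\check X_{nj}|^2\to0$ — so $S_n$ and $\sum_j\check X_{nj}$ have the same weak limit — and $\sum_j\E_{j-1}\check X_{nj}^2\cvgP{n}1$. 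Since $\check X_{nj}^2-\E_{j-1}\check X_{nj}^2$ is a martingale difference whose conditional variances sum to at most $(2\varepsilon_n)^2\sum_j\E_{j-1}\check X_{nj}^2\cvgP{n}0$, a standard localization (of the same type as in Step 2) upgrades this to $\sum_j\check X_{nj}^2\cvgP{n}1$. From here on I drop the checks: $|X_{nj}|\le2\varepsilon_n$, $W_n\cvgP{n}1$, $\sum_jX_{nj}^2\cvgP{n}1$.

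\emph{Step 2: McLeish factorization, localization, conclusion.} With the elementary identity $e^{\ii x}=(1+\ii x)e^{r(x)}$, valid with $|r(x)|\le x^2$, $\re r(x)=-\tfrac12\log(1+x^2)\le0$ and $|\im r(x)|\le|x|^3$ for $|x|\le\tfrac12$, I would write (for $n$ large, so that $2|t|\varepsilon_n\le\tfrac12$)
\[
e^{\ii tS_n}=P_n\,e^{R_n},\qquad P_n:=\prod_{j=1}^n(1+\ii tX_{nj}),\qquad R_n:=\sum_{j=1}^n r(tX_{nj}).
\]
The key structural fact, replacing independence, is that $\bigl(\prod_{j\le k}(1+\ii tX_{nj})\bigr)_k$ is a (complex) martingale, since $\E_{j-1}(1+\ii tX_{nj})=1$. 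From $\re R_n=-\tfrac12\sum_j\log(1+t^2X_{nj}^2)$, the Taylor expansion of $\log(1+\cdot)$ together with $\max_jt^2X_{nj}^2\le4t^2\varepsilon_n^2\to0$ and $\sum_jt^2X_{nj}^2\cvgP{n}t^2$ gives $\re R_n\cvgP{n}-t^2/2$, and $|\im R_n|\le\bigl(\max_j|tX_{nj}|\bigr)\sum_jt^2X_{nj}^2\cvgP{n}0$; hence $e^{R_n}\cvgP{n}e^{-t^2/2}$ while $|e^{R_n}|\le1$. To handle $P_n$ I would localize: with $W_{n,k}:=\sum_{j\le k}\E_{j-1}X_{nj}^2$ and $\tau_n:=n\wedge\min\{k:W_{n,k}>2\}$, put $\widetilde P_n:=\prod_{j\le\tau_n}(1+\ii tX_{nj})$. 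Optional stopping at the bounded time $\tau_n$ applied to the above martingale gives $\E\widetilde P_n=1$; and since $\bigl(\prod_{j\le k}(1+t^2X_{nj}^2)e^{-t^2W_{n,k}}\bigr)_k$ is a nonnegative supermartingale ($1+y\le e^y$) of initial value $1$, while $W_{n,\tau_n}\le2+4\varepsilon_n^2$, optional stopping also gives $\E|\widetilde P_n|^2=\E\prod_{j\le\tau_n}(1+t^2X_{nj}^2)\le e^{t^2(2+4\varepsilon_n^2)}$, so $(\widetilde P_n)$ is bounded in $L^2$, hence uniformly integrable. Because $W_n\cvgP{n}1<2$, $\mathbb P(\tau_n<n)\to0$, so $P_n=\widetilde P_n$ — hence $e^{\ii tS_n}=\widetilde P_ne^{R_n}$ — on an event of probability $\to1$; with $|e^{\ii tS_n}|=1$ and Cauchy--Schwarz this gives $\E e^{\ii tS_n}=\E[\widetilde P_ne^{R_n}]+o(1)$. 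Finally
\[
\E[\widetilde P_ne^{R_n}]=e^{-t^2/2}\,\E\widetilde P_n+\E\bigl[\widetilde P_n\,(e^{R_n}-e^{-t^2/2})\bigr]=e^{-t^2/2}+o(1),
\]
the error term vanishing by Vitali's convergence theorem ($(\widetilde P_n)$ uniformly integrable, $e^{R_n}-e^{-t^2/2}\cvgP{n}0$ and bounded by $2$). Thus $\E e^{\ii tS_n}\to e^{-t^2/2}$ for all $t$, and L\'evy's continuity theorem finishes the proof.

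\emph{Main obstacle.} The only genuinely delicate point is the uniform integrability of the product $P_n$ in Step 2 — i.e.\ coping with the fact that $W_n$ is controlled only in probability, not in $L^1$ — which I would settle with the stopping-time localization above; the rest is routine bookkeeping, provided one is careful that the truncation of Step 1 keeps the martingale difference structure (hence the recentering by $-\E_{j-1}[\cdots]$) and the conditional-variance limit. It bears emphasizing that the martingale hypothesis enters at a single point, to ensure $\E\widetilde P_n=1$ (and $\E P_n=1$) \emph{exactly}, which is precisely what forces the product over $j$ to behave, asymptotically, as it would under independence.
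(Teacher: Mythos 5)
Your proof is correct and is essentially the classical McLeish-style characteristic-function argument behind Billingsley's Theorem 35.12, which the paper cites without reproducing: the recentered truncation, the factorization $e^{\ii t S_n}=\prod_j(1+\ii t X_{nj})e^{R_n}$ with the product being a mean-one martingale, and the stopping-time/supermartingale localization giving uniform integrability are exactly the standard ingredients. Your preliminary reduction to $\sup_n\Upsilon_n^2<\infty$ (needed because the stated Lyapounov exponent is $2(1+\delta)$ rather than $2+\delta$) is legitimate and harmless here, since in the application $\Upsilon_n^2=\Theta_n$ satisfies \eqref{eq:estimate-variance}.
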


\begin{rem} Note that if moreover $\liminf_n \Upsilon_n^2>0$, it is sufficient to prove: 
\begin{equation}\label{eq:clt-martingale-bis}
\sum_{j=1}^n \E_{j-1} {\gamma_j^{(n)}}^2 - \Upsilon_n^2 \cvgP{n\to\infty} 0 \ ,
\end{equation}
instead of \eqref{eq:clt-martingale}.
\end{rem}

We now state a covariance identity (the proof of which is
straightforward and therefore omitted) for quadratic forms based on
non-centered vectors. This identity explains to some extent the
various terms obtained in the variance.

Let $\bs{x}=(x_1,\cdots, x_N)^T$ be a $N \times 1$ vector where the
$x_i$ are centered i.i.d.~complex random variables with unit
variance. Let $\bs {y}=N^{-1/2} D^{1/2} \bs{x}$ where $D$ is a $N \times N$ 
diagonal nonnegative deterministic matrix. Let $M=(m_{ij})$ and
$P=(p_{ij})$ be $N\times N$ deterministic complex matrices and let 
$\bs{u}$ be a $N\times 1$ deterministic vector.

If $M$ is an $N\times N$ matrix, $\vdg(M)$ stands for the $N\times 1$ 
vector $[M_{11},\cdots, M_{NN}]^T$. 

Denote by $\Upsilon(M)$ the random variable:
$$
\Upsilon(M)= (\bs{y}+\bs{u})^* M (\bs{y}+\bs{u})\ .
$$
Then $ \mathbb{E}\Upsilon(M) = \frac 1N \tr DM + \bs{u}^* M \bs{u}\ $
and the covariance between $\Upsilon(M)$ and $\Upsilon(P)$ is:
\begin{eqnarray}\label{eq:identite-fondamentale}
  \lefteqn{\mathbb{E}\left[ \left(\Upsilon(M) - \mathbb{E} \Upsilon(M) \right) 
      \left(\Upsilon(P) - \mathbb{E} \Upsilon(P) \right)\right]} \nonumber \\
  &=& \frac 1{N^2}  \tr  (MDPD) 
+ \frac 1N \left(\bs{u}^* M D P \bs{u} + \bs{u}^*P D M \bs{u}\right)\nonumber \\
  && \quad + 
\frac{|\mathbb{E}[x_1^2]|^2}{N^2} \tr(M D P^T D) 
+ \frac {\mathbb{E}[x_1^2]}N \bs{u}^* P D M^T \bar{\bs{u}} 
  + \frac{\E[\bar{x}_1^2]}N \bs{u}^T M^T D P \bs{u}\nonumber \\
  && \quad \quad + \frac{\E[|x_1|^2x_1]}{N^{3/2}}
\left(\bs{u}^* P D^{3/2} \vdg(M) + \bs{u}^* M D^{3/2} \vdg(P)\right)\nonumber \\
  && \quad\quad\quad + \frac{\E[|x_1|^2\bar{x}_1]}{N^{3/2}}\left(\vdg(P)^T D^{3/2} M \bs{u} + \vdg(M)^T D^{3/2} P \bs{u}\right)\nonumber \\   
  && \quad\quad\quad\quad + \frac{\kappa}{N^2} 
\sum^{N}_{i=1} d^2_{ii} m_{ii} p_{ii}\ ,
\end{eqnarray}
where $\kappa = \mathbb{E}|x_1|^4 - 2 - |\E x^2_1|^2$.

\begin{rem} Identity \eqref{eq:identite-fondamentale} is the
  cornerstone for the proof of the CLT; it is the counterpart of
  Identity (1.15) in \cite{BaiSil04}. The complexity of Identity
  \eqref{eq:identite-fondamentale} with respect to \cite[Identity
  (1.15)]{BaiSil04} lies in 8 extra terms and stems from two elements:
\begin{enumerate}
\item The fact that matrix $\Sigma$ is non-centered.
\item The fact that the random variables $X_{ij}$'s are either real
  and complex with no particular assumption on their second moment
 (in particular, $\mathbb{E} X_{ij}^2$ can be non zero in the complex case).
\end{enumerate}
It is this identity which induces to a large extent all the
computations in the present article.
\end{rem}

\section{Proof of Theorem \ref{th:CLT} (part I)}\label{sec:proof-clt}
\begin{center} {\em Decomposition of
    ${\mathcal I}_n - \mathbb{E}{\mathcal I}_n$, Cumulant and cross-moments 
terms in the variance}
\end{center}

\subsection{Decomposition of ${\mathcal I}_n - \mathbb{E}{\mathcal I}_n$ 
as a sum of martingale differences}
Denote by 
$$\Gamma_j = 
\frac{\eta_j^* Q_j \eta_j - \left( \frac{\tilde d_j}{n} \tr D Q_j + a_j^* Q_j a_j\right)}{1 + \frac{\tilde d_j}{n} \tr D Q_j + a_j^* Q_j a_j}\ .
$$
With this notation at hand, the decomposition of 
${\mathcal I}_n - \mathbb{E}{\mathcal I}_n$ as 
\begin{equation}\label{eq:split-martingale}
{\mathcal I}_n - \mathbb{E}{\mathcal I}_n =  
\sum_{j=1}^n (\E_j -\E_{j-1}) \log (1+\Gamma_j)
\end{equation}
follows verbatim from \cite[Section 6.2]{HLN08}. Moreover, it is a matter of 
bookkeeping to establish the following (cf. \cite[Section 6.4]{HLN08}):
\begin{equation}\label{eq:split-martingale-bis}
\sum_{j=1}^n \E_{j-1} \left( (\E_j -\E_{j-1}) \log (1+\Gamma_j) \right)^2
- \sum_{j=1}^n \E_{j-1} (\E_j \Gamma_j)^2 \cvgP{n\to\infty} 0 \ . 
\end{equation}
Hence, the details are omitted. 
In view of 
Theorem \ref{th:clt-martingales}, 
Eq. \eqref{eq:clt-martingale-bis}, \eqref{eq:split-martingale} and
\eqref{eq:split-martingale-bis}, the CLT will be established if one
proves the following 3 results:
\begin{enumerate}
\item (Lyapounov condition)
$$
\exists \delta > 0, \quad \sum_{j=1}^n \E\left| \E_j \Gamma_j  \right|^{2+\delta} 
\xrightarrow[n\rightarrow \infty]{} 0\ ,
$$
\item (Martingale increments and variance)
$$
\sum_{j=1}^n \E_{j-1} (\E_j \Gamma_j)^2 - \Theta_n \cvgP{n\to\infty} 0 \ .
$$
\item (estimates over the variance)
$$
0<\liminf_n \Theta_n \le \limsup_n \Theta_n <\infty
$$
\end{enumerate}

It is straightforward (and hence omitted) to verify Lyapounov
condition. The convergence toward the variance is the cornerstone of
the proof of the CLT: The rest of this section together with much of
Section \ref{sec:proof-clt-2} are devoted to establish it. The
estimates over the variance $\Theta_n$, also central to apply Theorem
\ref{th:clt-martingales}, are established in Section
\ref{sec:bound-theta}.

Notice that 
$\E_{j-1} (\E_j \Gamma_j)^2= \E_{j-1} (\E_j \rho\tilde b_j e_j)^2$. 
We prove hereafter that 
\begin{equation}\label{eq:master-convergence-2}
\sum_{j=1}^n \E_{j-1} (\E_j \rho \tilde b_j e_j)^2 
- \sum_{j=1}^n \rho^2 \tilde t^2_{jj} 
\E_{j-1} (\E_j e_j)^2 \cvgP{n\to\infty} 0 \ . 
\end{equation}
The inequality $\E|\tilde b_j - \tilde t_{jj}|^2 \leq 2 
\E|\tilde b_j - \tilde q_{jj}|^2 + 2 \E|\tilde q_{jj} - \tilde t_{jj}|^2$ 
in conjunction with Estimates \eqref{eq-(q-b)} and
\eqref{eq:estimate-t-tilde} yield 
$\E|\tilde b_j - \tilde t_{jj}|^2 = {\mathcal O}(n^{-1})$. Moreover, 
\begin{multline*}
\E\left| \E_{j-1} (\E_j \rho \tilde b_j e_j)^2 -  
  \E_{j-1} (\rho \tilde t_{jj} \E_j e_j)^2\right| \leq 
\E\left| \left( \E_j \rho \tilde b_j e_j \right)^2 - 
 \left( \E_j \rho \tilde t_{jj} e_j \right)^2 \right| \\
= 
\E\left| 
\left( \E_j ( \rho \tilde b_j - \rho \tilde t_{jj} ) e_j \right)  
\left( \E_j ( \rho \tilde b_j + \rho \tilde t_{jj} ) e_j \right)  
\right|  
\leq \E\left| 
( \rho \tilde b_j - \rho \tilde t_{jj} ) e_j   
\left( \E_j ( \rho \tilde b_j + \rho \tilde t_{jj} ) e_j \right)  
\right|  \\ 
\leq K n^{-3/2} 
\end{multline*} 
using Cauchy-Schwarz inequality and \eqref{eq:BS-lemma-2}. This implies 
\eqref{eq:master-convergence-2}. Let 
$\varsigma = \E (|X_{11}^2| X_{11})$. 
Using Identity \eqref{eq:identite-fondamentale}, we develop the quantity 
$\E_{j-1} (\E_j e_j)^2$:
 
\begin{eqnarray}
  \lefteqn{  \sum_{j=1}^n \rho^2 \tilde t^2_{jj}
    \E_{j-1} (\E_j e_j)^2 }\nonumber \\
&=& \frac{\kappa}{n^2} \sum_{j=1}^n \rho^2 \tilde d_j^2 \tilde t_{jj}^2 
\sum_{i=1}^N d_{i}^2  [ \E_j Q_j ]_{ii}^2 \nonumber \\ 
&\phantom{=}& 
+ \frac{4}{n} \sum_{j=1}^n \rho^2 \tilde d_j^{3/2} \tilde t_{jj}^2 
\re\left( \varsigma \frac{a_j^* (\E_j Q_j) D^{3/2} \vdg( \E_j Q_j)}
{\sqrt{n}} \right)\nonumber  \\
&\phantom{=}& 
+ \frac{1}{n} \sum_{j=1}^n \rho^2 \tilde t_{jj}^2 
\left( 
\frac{\tilde d_j^2}{n}  \tr (\E_j Q_j) D (\E_j Q_j) D + 
2 \tilde d_j a_j^* (\E_j Q_j) D (\E_j Q_j) a_j 
\right) \nonumber \\
&\phantom{=}& 
+ \frac{1}{n} \sum_{j=1}^n \rho^2 \tilde t_{jj}^2 
\left( 
|\vartheta|^2 \frac{\tilde d_j^2}{n} \tr (\E_j Q_j) D (\E_j \bar Q_j) D 
+ 2 \re \left( 
  \vartheta \tilde d_j a_j^* (\E_j Q_j) D (\E_j \bar Q_j) \bar a_j \right)
\right) \nonumber \\
&\stackrel{\triangle}{=}&  
 \sum_{j=1}^n \chi_{1j} + \sum_{j=1}^n \chi_{2j} + \sum_{j=1}^n \chi_{3j} 
+ \sum_{j=1}^n \chi_{4j} \ . \nonumber 
\end{eqnarray} 

\subsection{Key lemmas for the identification of the variance}
The remainder of the proof of Theorem \ref{th:CLT} is devoted to find
deterministic equivalents for the terms $\sum_{j=1}^n \chi_{\ell j}$ 
for $\ell=1,2,3,4$. 

\begin{lemma}
\label{lemma:chi1} 
Assume that the setting of Theorem \ref{th:CLT} holds true, then:
$$
\sum_{j=1}^n \chi_{1j} 
- \frac{\kappa \rho^2}{n^2} \sum_{i=1}^N \sum_{j=1}^n 
d_i^2 t_{ii}^2  \tilde d_j^2 \tilde t_{jj}^2 \cvgP{n\to\infty} 0\ .
$$
\end{lemma}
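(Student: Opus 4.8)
The plan is to establish the stronger statement that the left-hand side tends to $0$ in $L^1$, which in particular yields the stated convergence in probability. The starting point is the definition $\chi_{1j} = \frac{\kappa \rho^2}{n^2}\,\tilde d_j^2 \tilde t_{jj}^2 \sum_{i=1}^N d_i^2 [\E_j Q_j]_{ii}^2$, which gives
\[
\sum_{j=1}^n \chi_{1j} - \frac{\kappa \rho^2}{n^2}\sum_{i=1}^N\sum_{j=1}^n d_i^2 t_{ii}^2 \tilde d_j^2 \tilde t_{jj}^2 = \frac{\kappa \rho^2}{n^2}\sum_{j=1}^n \tilde d_j^2 \tilde t_{jj}^2 \sum_{i=1}^N d_i^2\bigl([\E_j Q_j]_{ii}^2 - t_{ii}^2\bigr).
\]
Using $|\kappa| < \infty$ together with the uniform bounds $d_i \le \dmax$ and $\tilde d_j^2 \tilde t_{jj}^2 \le \dtmax^2 \rho^{-2}$, it is enough to show that $n^{-2}\sum_{j=1}^n \E\bigl|\sum_{i=1}^N d_i^2([\E_j Q_j]_{ii}^2 - t_{ii}^2)\bigr| \to 0$.

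To do so I would factor $[\E_j Q_j]_{ii}^2 - t_{ii}^2 = ([\E_j Q_j]_{ii} - t_{ii})([\E_j Q_j]_{ii} + t_{ii})$, bound the second factor by $\|Q_j\| + \|T\| \le 2\rho^{-1}$, and thereby reduce the problem to controlling $\sum_{i=1}^N |[\E_j Q_j]_{ii} - t_{ii}|$. I would then split $[\E_j Q_j]_{ii} - t_{ii} = \E_j([Q_j]_{ii} - [Q]_{ii}) + \E_j([Q]_{ii} - t_{ii})$. For the first piece, I would use that $Q_j - Q = (\rho \tilde q_{jj})^{-1} Q\eta_j \eta_j^* Q$ is a nonnegative rank-one matrix (by \eqref{eq:Qj=f(Q,eta)}, \eqref{eq:1+etaQeta} and \eqref{eq:diag-coresolvent}), so that, by Jensen's inequality and Lemma \ref{lem:rank1-perturbation} applied to the identity matrix,
\[
\sum_{i=1}^N \bigl|\E_j\bigl([Q_j]_{ii} - [Q]_{ii}\bigr)\bigr| \le \E_j \sum_{i=1}^N \bigl[Q_j - Q\bigr]_{ii} = \E_j\,\tr\bigl(Q_j - Q\bigr) \le \rho^{-1},
\]
a deterministic $O(1)$ bound. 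For the second piece, Jensen's inequality for the conditional expectation together with the entrywise estimate $\E|[Q]_{ii} - t_{ii}|^2 \le K/n$ — which follows from Theorem \ref{theo:quadra}(\ref{conv-quadra}) with $p = 1$ and $u_n = v_n$ equal to the $i$-th canonical basis vector of $\C^N$ — yield
\[
\E\sum_{i=1}^N \bigl|\E_j\bigl([Q]_{ii} - t_{ii}\bigr)\bigr| \le \sum_{i=1}^N \E\bigl|[Q]_{ii} - t_{ii}\bigr| \le \sum_{i=1}^N \bigl(\E|[Q]_{ii} - t_{ii}|^2\bigr)^{1/2} \le K\sqrt{n}.
\]
Combining the two, $\E\bigl|\sum_i d_i^2([\E_j Q_j]_{ii}^2 - t_{ii}^2)\bigr| \le K\sqrt n$ uniformly in $j$; dividing by $n^2$ and summing over the $n$ values of $j$ leaves $O(n^{-1/2}) \to 0$, which proves the lemma.

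The computations are routine; the one point that requires care — and is the main obstacle to a naive entry-by-entry argument — is that the individual entries $[Q_j]_{ii} - [Q]_{ii}$ are only of order $1$, not small, so the sum over $i$ must be estimated through trace-type identities (Lemma \ref{lem:rank1-perturbation} for the rank-one perturbation, and the $L^2$ entrywise control of $Q - T$ summed over $i$ for the rest) rather than term by term. This is why the argument delivers only the rate $n^{-1/2}$, which is nevertheless more than enough after the normalization by $n^{-2}$ and the summation of $n$ terms. An equally short variant would keep $\mathcal{T}_j$ in place of $Q$ throughout, relying on Theorem \ref{theo:quadra}(\ref{eq-u(Qj-Tj)v}) and the deterministic rank-one identity $\mathcal{T}_j - T = (1 + \delta \tilde d_j)^{-1} \mathcal{T}_j a_j a_j^* T$, which gives $\sum_i |[\mathcal{T}_j]_{ii} - t_{ii}| \le \|\mathcal{T}_j a_j\|\,\|T a_j\| \le \amax^2 \rho^{-2}$.
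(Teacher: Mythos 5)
Your proof is correct and rests on the same two ingredients as the paper's: Lemma~\ref{lem:rank1-perturbation} (exploited via the rank-one, positive-semidefinite structure of $Q_j - Q$) for the $Q_j$-versus-$Q$ piece, and the entrywise $L^2$ estimate from Theorem~\ref{theo:quadra}-\eqref{conv-quadra} for the $Q$-versus-$T$ piece. The only difference is bookkeeping: the paper replaces one factor $[\E_j Q_j]_{ii}$ by $t_{ii}$ at a time, keeping the quantity $n^{-1}\tr D^2 \diag(\E_j Q_j)(Q_j - Q)$ intact so that Lemma~\ref{lem:rank1-perturbation} applies directly, whereas you factor the difference of squares, bound the sum factor by $2/\rho$, and recover the trace bound from nonnegativity of $Q_j - Q$ --- both routes give the same $O(n^{-1/2})$ rate in $L^1$.
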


\begin{proof}
Write
\begin{multline*}
\frac 1n \sum_{i=1}^N d_{i}^2  [ \E_j Q_j ]_{ii}^2 - 
\frac 1n \sum_{i=1}^N d_{i}^2  [ \E_j Q_j ]_{ii} t_{ii} = \\ 
\frac 1n \sum_{i=1}^N d_i^2 [\E_{j} Q_j]_{ii} \E_{j}([Q_j]_{ii}- [Q]_{ii}) 
+
\frac 1n \sum_{i=1}^N d_i^2 [\E_{j} Q_j]_{ii} ( [ \E_{j}Q]_{ii}- t_{ii})
= \varepsilon_{1,j} + \varepsilon_{2,j} . 
\end{multline*}
The term 
$| \varepsilon_{1,j} | = n^{-1}| \E_j[\tr D^2 \diag(\E_{j} Q_j) (Q_j - Q)]|$ 
is of order ${\mathcal O}(n^{-1})$ thanks to Lemma 
\ref{lem:rank1-perturbation}. Moreover, 
$\E | \varepsilon_{2,j} | = {\mathcal O}(n^{-1/2})$ by the analogue of 
\eqref{eq:estimate-t-tilde} for the diagonal elements of the resolvent. Hence, 
$$
\sum_{j=1}^n \chi_{1j} - \frac{\kappa \rho^2}{n} 
\sum_{j=1}^n \tilde d_j^2 \tilde t_{jj}^2 \left( \frac 1n 
      \sum_{i=1}^N d_i^2 [ \E_{j} Q_j]_{ii} t_{ii} 
    \right)\cvgP{n\to\infty} 0\ .
$$
Iterating the same arguments, we can replace the remaining term
$\E_{j} [Q_j]_{ii}$ by $t_{ii}$ to obtain the desired result. 
\end{proof}

\begin{lemma}
\label{lemma:chi2} 
Assume that the setting of Theorem \ref{th:CLT} holds true. Then:
$$
\sum_{j=1}^n \chi_{2j} \cvgP{n\to\infty} 0\ .
$$
\end{lemma}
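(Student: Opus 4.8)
The plan is to prove that $\sum_{j=1}^n \chi_{2j} \to 0$ in $L^1$ (hence in probability). Set $M_j = \E_j Q_j$ and recall that $\varsigma = \E(|X_{11}^2| X_{11})$ is finite since $\E|X_{11}|^{16}<\infty$. From the definition of $\chi_{2j}$ one has
\[
\sum_{j=1}^n \chi_{2j} \;=\; \frac{4\rho^2}{n^{3/2}}\,\re\!\left(\varsigma\, S\right),
\qquad
S \;:=\; \sum_{j=1}^n \tilde d_j^{3/2}\tilde t_{jj}^2\, a_j^* M_j D^{3/2}\vdg(M_j),
\]
so everything reduces to showing $\E|S| = {\mathcal O}(n)$, which gives $\E\left|\sum_j\chi_{2j}\right| = {\mathcal O}(n^{-1/2})$. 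A brutal term-by-term bound, using $\|M_j\|\le\rho^{-1}$, $\|a_j\|\le\amax$ and $\|\vdg(M_j)\|\le\rho^{-1}\sqrt N$, only gives $\E|S|={\mathcal O}(n^{3/2})$; the missing factor $\sqrt n$ is recovered by exploiting that $M_j$ is close to the deterministic matrix $T$ (uniformly in $j$) and $Q_j$ to ${\mathcal T}_j$, which allows one to \emph{keep the sum over $j$ together} and rewrite its main contributions as bilinear forms $(A\bs{\beta})^* T w$ with $\bs{\beta}\in\C^n$, $\|\bs{\beta}\|={\mathcal O}(\sqrt n)$, and $w\in\C^N$ deterministic with $\|w\|={\mathcal O}(\sqrt n)$; such a quantity is ${\mathcal O}(n)$ since $|(A\bs{\beta})^* T w| \le \amax\|\bs{\beta}\|\,\rho^{-1}\|w\|$.

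Concretely I would use, with $w := D^{3/2}\vdg(T)$ (so $\|w\|\le\dmax^{3/2}\rho^{-1}\sqrt N$ and $N\le\lplus n$), the decomposition
\[
a_j^* M_j D^{3/2}\vdg(M_j) \;=\; a_j^* T w \;+\; a_j^*(M_j - T)\,w \;+\; a_j^* M_j D^{3/2}\vdg(M_j - T),
\]
and estimate the three corresponding partial sums of $S$. The first is deterministic and equals $(A\bs{\alpha})^* T w$ with $\bs{\alpha}=(\tilde d_j^{3/2}\tilde t_{jj}^2)_j$, $\|\bs{\alpha}\|={\mathcal O}(\sqrt n)$, hence is ${\mathcal O}(n)$. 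In the third partial sum I bound each term by $\|D^{3/2}M_j a_j\|\,\|\vdg(M_j-T)\|\le K\|\vdg(M_j-T)\|$ and use $\E\|\vdg(M_j-T)\| \le (\sum_i \E|[M_j]_{ii}-t_{ii}|^2)^{1/2}$; writing $[M_j]_{ii}-t_{ii}=\E_j([Q_j]_{ii}-[Q]_{ii})+\E_j([Q]_{ii}-t_{ii})$, bounding $[Q]_{ii}-[Q_j]_{ii}=z\tilde q_{jj}|e_i^* Q_j\eta_j|^2$ via \eqref{eq:woodbury} (with $e_i$ the $i$th canonical vector and $|z\tilde q_{jj}|\le1$), so that $\sum_i |[Q_j]_{ii}-[Q]_{ii}|^2 \le \|Q_j\eta_j\|^4 \le \rho^{-4}\|\eta_j\|^4$ with $\E\|\eta_j\|^4={\mathcal O}(1)$, and invoking item \eqref{conv-quadra} of Theorem \ref{theo:quadra} with $u_n=v_n=e_i$ for $\sum_i\E|[Q]_{ii}-t_{ii}|^2\le NK/n$, one gets $\sum_i\E|[M_j]_{ii}-t_{ii}|^2 = {\mathcal O}(1)$ uniformly in $j$, so this partial sum is ${\mathcal O}(n)$ as well.

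For the second partial sum I write $M_j - T = \E_j(Q_j-{\mathcal T}_j) + ({\mathcal T}_j - T)$. The contribution of $\E_j(Q_j-{\mathcal T}_j)$ is, after a conditional Jensen inequality and Cauchy--Schwarz, at most $\sum_j K\,(\E|a_j^*(Q_j-{\mathcal T}_j)w|^2)^{1/2}$, and item \eqref{eq-u(Qj-Tj)v} of Theorem \ref{theo:quadra} applied to the unit vector $w/\|w\|$ (uniformly in $j$, since $\|a_j\|\le\amax$) gives $\E|a_j^*(Q_j-{\mathcal T}_j)w|^2 \le K\|w\|^2/n = {\mathcal O}(1)$, so this is ${\mathcal O}(n)$. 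The contribution of the deterministic term $a_j^*({\mathcal T}_j-T)w$ is handled by identity \eqref{eq:T-cal(T)_j}, which yields $a_j^*{\mathcal T}_j w = (\rho\tilde t_{jj}(1+\tilde d_j\delta))^{-1}a_j^* T w$, hence $a_j^*({\mathcal T}_j-T)w = c_j\, a_j^* T w$ where, by \eqref{eq:t-tilde-jj}, $c_j = a_j^*{\mathcal T}_j a_j/(1+\tilde d_j\delta)$ is a deterministic scalar bounded by $\amax^2\rho^{-1}$; thus $\sum_j \tilde d_j^{3/2}\tilde t_{jj}^2 c_j\, a_j^* T w = (A\bs{\beta})^* T w$ with $\bs{\beta}=(\tilde d_j^{3/2}\tilde t_{jj}^2 c_j)_j$, $\|\bs{\beta}\|={\mathcal O}(\sqrt n)$, again ${\mathcal O}(n)$. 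Summing the three bounds yields $\E|S|={\mathcal O}(n)$, whence the lemma.

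I do not foresee a genuine obstacle, but two points deserve care. First, one must not apply the triangle inequality to the full product $a_j^* M_j D^{3/2}\vdg(M_j)$, which would irretrievably lose the decisive factor $\sqrt n$; the replacement of $M_j$ by $T$ has to be performed \emph{before} bounding, so that the sum over $j$ collapses into a single bilinear form. Second, the diagonal correction $\vdg(Q_j-Q)$ must be controlled in aggregate $\ell^2$-norm over the index $i$ using the explicit rank-one identity \eqref{eq:woodbury}: Lemma \ref{lem:rank1-perturbation} alone is not enough, because the specular part $a_j$ in $\eta_j = y_j + a_j$ prevents the individual entries $[Q_j]_{ii}-[Q]_{ii}$ from being of order $n^{-1}$.
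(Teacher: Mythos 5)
Your proof is correct, and it shares the overall strategy of the paper (decompose $\vdg(\E_j Q_j)$ around the deterministic $\vdg(T)$ and keep the sum over $j$ together so that the factor $\sqrt n$ is gained in aggregate rather than termwise), but it differs in how the leading contribution is controlled. The paper leaves the random factor $Q_j$ in the left slot, Cauchy--Schwarzes in $j$, and invokes Theorem \ref{theo:quadra}-\eqref{quadra-sum-bounded}, namely $\sum_j \E|u^* Q_j a_j|^2 \le K$; that single estimate kills the main term in one stroke. You instead replace $\E_j Q_j$ by $T$ in the left slot as well, turning the main contribution into the purely deterministic bilinear form $(A\bs{\alpha})^* T w$, which requires no probabilistic input at all; the price is that you must now also control $a_j^*(\E_j Q_j - T)w$, which you do via a further split $\E_j(Q_j - {\mathcal T}_j) + ({\mathcal T}_j - T)$, Theorem \ref{theo:quadra}-\eqref{eq-u(Qj-Tj)v}, and the algebraic identities \eqref{eq:t-tilde-jj}--\eqref{eq:T-cal(T)_j}. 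This is a valid alternative; it trades one probabilistic lemma for more explicit algebra involving ${\mathcal T}_j$, which the paper's proof of this particular lemma does not need.

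One remark in your closing paragraph is slightly misleading. You assert that Lemma \ref{lem:rank1-perturbation} ``alone is not enough'' for the $\vdg(Q_j-Q)$ correction and that one must use the explicit rank-one identity \eqref{eq:woodbury}. In fact the paper handles this correction precisely with Lemma \ref{lem:rank1-perturbation}: since $a_j^*(\E_j Q_j)D^{3/2}$ is a uniformly bounded row vector, one writes
$a_j^*(\E_j Q_j)D^{3/2}\,\vdg(Q_j-Q) = \tr\bigl(\diag(a_j^*(\E_j Q_j)D^{3/2})\,(Q_j-Q)\bigr)$,
and the lemma applied to the bounded diagonal matrix gives an a.s. bound of order $1$. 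Your route (bounding $\sum_i |[Q_j-Q]_{ii}|^2 \le (\rho\tilde q_{jj})^2\|Q_j\eta_j\|^4$ and taking expectations) is correct and yields the same order, but it is not the only way around the fact that individual diagonal entries of $Q_j-Q$ are $O(1)$ rather than $O(n^{-1})$. The key is simply not to separate $\|\vdg(Q_j-Q)\|$ by Cauchy--Schwarz before exploiting the rank-one structure, which both proofs respect.
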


\begin{proof} 
We have 
\begin{eqnarray}
\E \left| \sum_{j=1}^n \chi_{2j} \right| &\le& 
\frac Kn \sum_{j=1}^n 
\E\left| a_j^* (\E_j Q_j) D^{3/2} \frac{ \vdg (Q_j)}{\sqrt{n}} \right| 
\nonumber\\
&\le& \frac Kn \sum_{j=1}^n 
\E\left| a_j^* Q_j D^{3/2} \frac{\vdg (T)}{\sqrt{n}} \right| 
+ \frac Kn \sum_{j=1}^n 
\E\left| a_j^* (\E_j Q_j) D^{3/2} \frac{\vdg (Q-T)}{\sqrt{n}} \right|
\nonumber \\
&&\quad
+
\frac Kn \sum_{j=1}^n 
\E\left| a_j^* (\E_j Q_j) D^{3/2} \frac{\vdg (Q_j-Q)}{\sqrt{n}} \right|\ . 
\label{eq:chi2}
\end{eqnarray}
The first term satisfies 
$$
\sum_{j=1}^n  \E\left| a_j^* Q_j D^{3/2} \frac{\vdg (T)}{\sqrt{n}} \right| 
\le \sqrt{n} \left(  \sum_{j=1}^n 
\E\left| a_j^* Q_j D^{3/2} \frac{\vdg (T)}{\sqrt{n}} \right|^2  \right)^{1/2} .
$$
As $\| n^{-1/2} D^{3/2} \vdg(T)\| = 
(n^{-1} \sum_{i=1}^N d_{i}^3 t_{ii}^2)^{1/2} \leq K$, 
Theorem \ref{theo:quadra}-\eqref{quadra-sum-bounded} can be applied, and
the first term at the r.h.s.~of \eqref{eq:chi2} is of order 
$n^{-1/2}$. We now deal with the second term at the r.h.s.
\[
\E\left| a_j^* (\E_j Q_j) D^{3/2} \frac{\vdg (Q-T)}{\sqrt{n}} \right|
\le K \, \E \left\| \frac{\vdg (Q-T)}{\sqrt{n}} \right\| 
\le \left(\frac Kn \sum_{i=1}^N \E |q_{ii} - t_{ii}|^2 
\right)^{1/2} \leq \frac{K}{\sqrt{n}} 
\] 
by \eqref{eq:estimate-t-tilde}. We now consider the third term. Since 
$\| a_j^* (\E_j Q_j) D^{3/2} \|$ is uniformly bounded, 
\begin{multline*} 
\frac 1n \sum_{j=1}^n 
\E\left| a_j^* (\E_j Q_j) D^{3/2} \frac{\vdg (Q_j-Q)}{\sqrt{n}} \right| \\ 
= 
\frac{1}{n^{3/2}} \sum_{j=1}^n 
\E \left| \tr \left( \diag( a_j^* (\E_j Q_j) D^{3/2} ) (Q_j - Q) \right) 
\right|  
\leq \frac{K}{\sqrt{n}} 
\end{multline*} 
by Lemma \ref{lem:rank1-perturbation}. 
\end{proof}

\begin{lemma}
\label{lemma:chi3} 
Assume that the setting of Theorem \ref{th:CLT} holds true, then:
\[
\sum_{j=1}^n \chi_{3j} + 
\log \left(
\left( 1- \frac 1n \mathrm{Tr}\, D^{\frac 12} T A(I + \delta \tilde D)^{-2}
\tilde D A^* T D^{\frac 12} \right)^2 - \rho^2 \gamma \tilde \gamma \right)
\cvgP{n\to\infty} 0 . 
\]
\end{lemma}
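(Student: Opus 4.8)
\noindent\emph{Proof plan.}
Set $\hat Q_j := \E_j Q_j$. Since $Q_j$ does not involve the column $y_j$, the matrix $\hat Q_j$ is $\mathcal F_{j-1}$-measurable with $\|\hat Q_j\|\le\rho^{-1}$, and $\chi_{3j}=\chi_{3j}^{\mathrm{tr}}+\chi_{3j}^{\mathrm{bil}}$ with $\chi_{3j}^{\mathrm{tr}}=\frac{\rho^2\tilde t_{jj}^2\tilde d_j^2}{n^2}\tr\hat Q_jD\hat Q_jD$ and $\chi_{3j}^{\mathrm{bil}}=\frac{2\rho^2\tilde t_{jj}^2\tilde d_j}{n}a_j^*\hat Q_jD\hat Q_ja_j$. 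The plan is to split each of these into a \emph{mean part}, where both factors $\hat Q_j$ are replaced by the deterministic equivalent $\mathcal T_j$ of \eqref{eq:T-cal-j} (and then by $T$), and \emph{covariance parts}, in which at least one factor carries $\hat Q_j-\mathcal T_j$. Using Theorem~\ref{theo:quadra}\eqref{eq-u(Qj-Tj)v}, Lemma~\ref{lem:rank1-perturbation}, Theorem~\ref{theo:quadra}\eqref{speed(T-Q)}--\eqref{var(trUQ)} and their co-resolvent analogues, every term in which exactly one factor is $\hat Q_j-\mathcal T_j$, as well as the error from passing from $\mathcal T_j$ to $T$, is at most $\mathcal O(n^{-3/2})$ per index $j$ and so vanishes after summation. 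For the mean parts, applying the identity \eqref{eq:T-cal(T)_j} twice (first with $b=D\mathcal T_ja_j$, then with $b=DTa_j$) and using that $z=-\rho,\delta,\tilde t_{jj},T,\mathcal T_j$ are real gives $a_j^*\mathcal T_jD\mathcal T_ja_j=\dfrac{a_j^*TDTa_j}{\rho^2\tilde t_{jj}^2(1+\tilde d_j\delta)^2}$, whence, with $\sum_j(A^*MA)_{jj}w_j=\tr(A^*MA\,\diag(w_j))$ and $\diag(\tilde d_j(1+\tilde d_j\delta)^{-2})=\tilde D(I+\delta\tilde D)^{-2}$,
$$
\sum_{j=1}^n\chi_{3j}^{\mathrm{bil},\,\mathrm{mean}}=\frac2n\tr\bigl(D^{1/2}TA(I+\delta\tilde D)^{-2}\tilde DA^*TD^{1/2}\bigr)=2\Phi_n,
$$
where $\Phi_n$ is the $\frac1n\tr(\cdots)$ quantity appearing inside $\Delta_n$; since $\mathcal T_j-T$ is rank one and $\mathcal O(1)$ in operator norm by \eqref{eq:mx-inversion}, one also gets $\sum_j\chi_{3j}^{\mathrm{tr},\,\mathrm{mean}}=\rho^2\gamma\,\frac1n\sum_j\tilde d_j^2\tilde t_{jj}^2+\mathcal O(n^{-1})$, and \eqref{eq:Ttilde=f(T)} relates $\frac1n\sum_j\tilde d_j^2\tilde t_{jj}^2$ to $\tilde\gamma$ and $\Phi_n$. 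These mean parts account only for the lowest-order terms $2\Phi_n+\rho^2\gamma\tilde\gamma+\cdots$ of $-\log\Delta_n=-\log\bigl((1-\Phi_n)^2-\rho^2\gamma\tilde\gamma\bigr)$.

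The crux is the covariance parts, namely $\frac{\rho^2\tilde t_{jj}^2\tilde d_j^2}{n^2}\tr(\hat Q_j-\mathcal T_j)D(\hat Q_j-\mathcal T_j)D$ and $\frac{2\rho^2\tilde t_{jj}^2\tilde d_j}{n}a_j^*(\hat Q_j-\mathcal T_j)D(\hat Q_j-\mathcal T_j)a_j$, which do \emph{not} vanish: each is $\mathcal O(n^{-1})$ because $\frac1n\tr(\hat Q_j-\mathcal T_j)D(\hat Q_j-\mathcal T_j)D$ and $a_j^*(\hat Q_j-\mathcal T_j)D(\hat Q_j-\mathcal T_j)a_j$ are sums of $\sim n^2$ entrywise covariances of $\hat Q_j$, hence $\mathcal O(1)$, and they carry a genuine dependence on $j$ through the number $j-1$ of ``frozen'' columns $\eta_1,\dots,\eta_{j-1}$ on which $\hat Q_j$ depends. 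I would write $\hat Q_j-\mathcal T_j$ as the telescoping accumulation, over those frozen columns, of the rank-one resolvent perturbations \eqref{eq:woodbury}, and apply the covariance identity \eqref{eq:identite-fondamentale} column by column (only the universal, $\kappa$-, $\vartheta$-, $\varsigma$-free terms surviving here). This yields an approximate closed linear system for $\frac1n\tr D(\hat Q_j-\mathcal T_j)D(\hat Q_j-\mathcal T_j)$, $a_j^*(\hat Q_j-\mathcal T_j)D(\hat Q_j-\mathcal T_j)a_j$ and the auxiliary bilinear forms $u_n^*(\hat Q_j-\mathcal T_j)v_n$ with $u_n,v_n$ among the columns of $A$; Lemma~\ref{lm:var-bounds} (the bounds $\sup_n\frac1n\tr(\cdots)<1$ and $\liminf_n\Delta_n>0$) is precisely what makes the operator $I-\mathcal K$ governing this system boundedly invertible, uniformly in $j$. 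Solving it expresses the covariance quantities through $\gamma,\tilde\gamma,\Phi_n$ and a monotone ``partial index'' $s_j=\frac1n\sum_{k\le j}(\cdots)$ with increments $\mathcal O(n^{-1})$.

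To conclude, I would substitute the outcome of the previous steps into $\sum_j\chi_{3j}=\sum_j\frac{\rho^2\tilde t_{jj}^2\tilde d_j}{n}\bigl(\frac{\tilde d_j}{n}\tr D\hat Q_jD\hat Q_jD+2a_j^*\hat Q_jD\hat Q_ja_j\bigr)$, turning the sum into a telescoping Riemann sum asymptotically equal to an integral of the form $\int_0^1\frac{c_n\,\mathrm dt}{(1-\Phi_n)^2-\rho^2\gamma\tilde\gamma\,t}$ up to negligible error, whose primitive is $-\log\bigl((1-\Phi_n)^2-\rho^2\gamma\tilde\gamma\bigr)=-\log\Delta_n$; the constants are identified as $\gamma,\tilde\gamma,\Phi_n$ via \eqref{eq:T-cal(T)_j}, \eqref{eq:t-tilde-jj}, \eqref{eq:Ttilde=f(T)} and Lemmas~\ref{lm:estimates}--\ref{lm:var-bounds}, and the $o(1)$ error is controlled with \eqref{eq:BS-lemma-2}, \eqref{eq-(q-b)}, \eqref{eq:estimate-t-tilde} and Theorem~\ref{theo:quadra}. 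This proves the claimed convergence in probability.

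\noindent\emph{Main obstacle.} The difficult point is setting up and, above all, \emph{closing} the linear system for the covariance quantities uniformly in $j$. Unlike the centered model of \cite{HLN08} (and unlike \cite{BaiSil04}), here $\tilde T$ is not diagonal, so the system necessarily couples the traces $\frac1n\tr D\,\cdot\,D$ with bilinear forms $a_j^*\,\cdot\,a_j$ and $u_n^*\,\cdot\,v_n$ in the columns of $A$; controlling these bilinear forms — through Theorem~\ref{theo:quadra} and the spectral-norm bound $\amax<\infty$ — and checking that $(I-\mathcal K)^{-1}$ stays bounded, which is exactly where $\Delta_n$ and $1-\Phi_n$ enter via Lemma~\ref{lm:var-bounds}, is where the bulk of the work lies.
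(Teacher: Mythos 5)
You correctly identify the overall architecture of the argument — reduce to expectations, derive a closed linear system in $j$-indexed quantities, solve it using the fact that $\liminf_n\Delta_n>0$, and finish with a telescoping sum producing $-\log\Delta_n$ — and your mean-part computation $\sum_j\chi_{3j}^{\mathrm{bil,\,mean}}=2\Phi_n$ via the double application of \eqref{eq:T-cal(T)_j} is correct. But the route by which you propose to reach the linear system differs genuinely from the paper's, and your sketch is weakest precisely there. The paper's key device, which you do not invoke, is the tower-property identity
\[
\E\bigl[a_k^*\,(\E_j Q)\,D\,(\E_j Q)\,a_k\bigr]\;=\;\E\bigl[a_k^*\,(\E_j Q)\,D\,Q\,a_k\bigr],
\]
together with its trace analogue $\tfrac1n\tr\E[(\E_j Q)D(\E_j Q)D]=\tfrac1n\tr\E[(\E_j Q)DQD]$. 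This \emph{linearizes} the dependence on $\E_j Q$: one may then expand the right-hand factor as $Q = T + T\bigl(\rho\tilde\delta D + A(I+\delta\tilde D)^{-1}A^* - \Sigma\Sigma^*\bigr)Q$, insert $\Sigma\Sigma^*=\sum_\ell\eta_\ell\eta_\ell^*$, and the $j$-dependence enters cleanly through the split $\ell\le j$ (frozen column) versus $\ell>j$ (column averaged out by $\E_j$). This closes to a genuine $2\times 2$ system \eqref{eq:system-2x2} in $(\psi_j,\varphi_j)$ with determinant $\bs\Delta_j$.

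Your alternative — writing $\hat Q_j-\mathcal T_j$ as a martingale telescoping over the frozen columns and applying \eqref{eq:identite-fondamentale} column by column — is plausible in spirit, but the martingale increments $(\E_k-\E_{k-1})Q_j$ are merely orthogonal, not independent, and the rank-one perturbations generating them couple across $k$. Without the linearization, closing a bounded-dimensional system out of this accumulation is precisely the obstruction you flag at the end; the paper's trick is what removes it. Moreover, your assertion that only the $\kappa$-, $\vartheta$-, $\varsigma$-free terms of \eqref{eq:identite-fondamentale} survive under repeated column-by-column application is stated but not established; the paper sidesteps the issue entirely, since after linearization it only needs the $L^p$ quadratic-form estimates of Lemma~\ref{lm-approx-quadra} (which do not decompose into cumulant pieces) together with Theorem~\ref{theo:quadra}\eqref{eq-u(Qj-Tj)v} and Lemma~\ref{lm-E(aQa)}. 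Two smaller remarks: the reduction of $\sum_j\chi_{3j}$ to its expectation is what Lemma~\ref{lem:control_variances} delivers and should be cited explicitly; and the precise identity relating $\frac{\rho^2}{n}\sum_j\tilde d_j^2\tilde t_{jj}^2$ to $\tilde\gamma$ is $M_n+G_n=\rho^2\tilde\gamma$ from Lemma~\ref{lm-Deltaj}, which involves the off-diagonal sum $G_n$ and not just $\Phi_n$.
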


\begin{lemma}
\label{lemma:chi4} 
Assume that the setting of Theorem \ref{th:CLT} holds true, then:
\begin{eqnarray*}
\sum_{j=1}^n \chi_{4j} + 
\log \left( 
\left| 1- 
\vartheta \frac 1n \mathrm{Tr}\, D^{\frac 12} \bar{T} \bar{A}
(I +\delta \tilde D)^{-2} \tilde D A^* T D^{\frac 12} \right|^2
-|\vartheta|^2\rho^2 \underline{\gamma}\, \underline{\tilde \gamma} 
\right) 
\cvgP{n\to\infty} 0\ .
\end{eqnarray*}
\end{lemma}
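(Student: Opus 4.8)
The plan is to find a deterministic equivalent for the remaining term
$\sum_{j=1}^n \chi_{4j}$, where
\[
\chi_{4j} = \frac{\rho^2 \tilde t_{jj}^2}{n}
\left( |\vartheta|^2 \frac{\tilde d_j^2}{n} \tr (\E_j Q_j) D (\E_j \bar Q_j) D
+ 2 \re\left( \vartheta \tilde d_j a_j^* (\E_j Q_j) D (\E_j \bar Q_j) \bar a_j \right) \right).
\]
This is the conjugate-linear counterpart of $\sum_j \chi_{3j}$, so the first step is to carry out, mutatis mutandis, the same reduction scheme used for Lemma \ref{lemma:chi3}: replace $\E_j Q_j$ by $T$ (and $\E_j \bar Q_j$ by $\bar T$) wherever it appears, controlling the errors by Theorem \ref{theo:quadra}, Lemma \ref{lem:rank1-perturbation}, and the identity \eqref{eq:T-cal(T)_j} which lets us turn the deterministic vectors $a_j$ into expressions governed by $\|A_n\|$. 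Concretely, I would first show
\[
\sum_{j=1}^n \chi_{4j} - \frac{\rho^2}{n} \sum_{j=1}^n \tilde t_{jj}^2
\left( |\vartheta|^2 \frac{\tilde d_j^2}{n} \tr T D \bar T D
+ 2 \re\left( \vartheta \tilde d_j a_j^* T D \bar T \bar a_j \right) \right)
\cvgP{n\to\infty} 0 ,
\]
using that the differences $\E_j Q_j - T$, measured through bilinear forms against bounded vectors and through normalized traces against bounded-norm matrices, are ${\mathcal O}(n^{-1/2})$ in the appropriate $L^p$ sense, while the prefactors $n^{-1}\sum_j \tilde t_{jj}^2 (\cdots)$ are bounded by Lemma \ref{lm:estimates}.

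The second step is the algebraic identification of the limit. Using \eqref{eq:T-cal(T)_j} to rewrite $a_j^* T$-type quantities and then summing over $j$, the term $\frac{1}{n}\sum_j \tilde t_{jj}^2 \tilde d_j^2 \tr T D \bar T D$ should be recognized, via the definition \eqref{eq:def-gamma} of $\underline\gamma$ and the relation \eqref{eq:Ttilde=f(T)} between $\tilde T$ and $T$, as producing a factor of the form $\rho^2 \underline\gamma\, \underline{\tilde\gamma}$, while the cross term $\frac{1}{n}\sum_j \tilde t_{jj}^2 \tilde d_j a_j^* T D \bar T \bar a_j$ telescopes into the quantity
$\frac 1n \tr D^{1/2} \bar T \bar A (I + \delta \tilde D)^{-2} \tilde D A^* T D^{1/2}$
appearing inside $\underline\Delta_n$. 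At this point one is left with a scalar sum of the shape $\sum_j$ of a geometric-series-like expression; recognizing it as $-\log\underline\Delta_n = -\log(|1 - \vartheta c_n|^2 - |\vartheta|^2 \rho^2 \underline\gamma\,\underline{\tilde\gamma})$ with $c_n = \frac 1n \tr D^{1/2}\bar T \bar A(I+\delta\tilde D)^{-2}\tilde D A^* T D^{1/2}$ is exactly the same log-expansion manipulation performed in Lemma \ref{lemma:chi3}, except that the factor $\vartheta$ (which may be any complex number of modulus $\le 1$) is carried along and absolute values/real parts must be tracked throughout. Lemma \ref{lm:var-bounds}, together with $|\vartheta|\le 1$, guarantees $|1 - \vartheta c_n|^2 - |\vartheta|^2\rho^2\underline\gamma\,\underline{\tilde\gamma}$ stays bounded away from $0$, so the logarithm is well-defined and the expansion converges.

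The main obstacle I expect is \emph{keeping track of the conjugates and of $\vartheta$} in the fixed-point/telescoping argument: because $\E_j \bar Q_j$ is the resolvent of the conjugated matrix, the analogue of the self-consistent equation for $a_j^* T D \bar T \bar a_j$ does not close in terms of $\tilde T$ alone but couples $T$ with $\bar T$, and the presence of $\vartheta$ in front changes the fixed-point coefficient from $1$ to $\vartheta$, so the "resolvent identity" bookkeeping that produced $(1 - \text{something})^2 - \rho^2\gamma\tilde\gamma$ in Lemma \ref{lemma:chi3} must be redone with $\vartheta$-weights and yields instead the modulus-squared structure $|1-\vartheta(\cdots)|^2 - |\vartheta|^2\rho^2\underline\gamma\,\underline{\tilde\gamma}$. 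A secondary technical point is that the martingale structure forces us to work with $\E_j Q_j$ rather than $Q_j$; replacing one by the other in bilinear forms is harmless by Jensen, but one must be careful that the resulting error estimates are summable in $j$, which follows from Theorem \ref{theo:quadra}-\eqref{conv-quadra} and \eqref{quadra-sum-bounded} exactly as in the proof of Lemma \ref{lemma:chi2}.
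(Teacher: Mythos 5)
Your Step 1 contains a genuine gap. You propose to replace $\E_j Q_j$ by $T$ (and $\E_j \bar Q_j$ by $\bar T$) \emph{wherever it appears}, in particular in the quantity $\frac 1n \tr (\E_j Q_j) D (\E_j \bar Q_j) D$, claiming the error is ${\mathcal O}(n^{-1/2})$ uniformly in $j$. That replacement is false. The theorem \ref{theo:quadra} estimates you cite control $u^*(Q-T)v$ for \emph{fixed} bounded vectors and $\frac 1n \tr U(\E Q - T)$ for bounded $U$; they do not control the quadratic-in-the-resolvent trace $\frac 1n \tr (\E_j Q) D (\E_j \bar Q) D$, whose deterministic equivalent $\underline\psi_j$ genuinely depends on $j$. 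The paper shows (after setting up and solving the perturbed $2\times 2$ linear system in $(\underline\psi_j, \underline\varphi_j)$) that $\underline\psi_j \approx \underline\gamma / \underline{\bs\Delta}_j$, which runs from $\underline\gamma$ at $j = 0$ to $\underline\gamma/\underline\Delta_n$ at $j = n$; your substitution would collapse this to the constant $\underline\gamma$. With a $j$-independent integrand, the sum $\sum_j \chi_{4j}$ would converge to a \emph{quadratic} expression of the form $|\vartheta|^2\rho^2\underline\gamma\underline{\tilde\gamma} + 2\re(\vartheta \cdot \text{cross term})$ rather than to $-\log\underline\Delta_n$, and there would be no "geometric-series-like" structure left to recognize in your Step 2. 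The $\log$ arises precisely because $\underline\psi_j$ and $\underline\theta_{jj}$ vary with $j$ through $\underline{\bs\Delta}_j$, making the sum telescope as $\sum_j (\underline{\bs\Delta}_{j-1}-\underline{\bs\Delta}_j)/\underline{\bs\Delta}_j \approx -\log\underline\Delta_n$.

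The correct route, which the paper follows and which cannot be short-circuited, is: introduce the $j$-indexed quantities $\underline\psi_j$, $\underline\zeta_{kj}$, $\underline\theta_{kj}$, $\underline\varphi_j$; use the resolvent identity $Q = T + T(T^{-1}-Q^{-1})Q$ together with the cumulant/cross-moment identity \eqref{eq:identite-fondamentale} (or rather its conjugate variant via Lemma \ref{lm-approx-quadra-non-circular}) to derive a perturbed $2\times 2$ linear system for $(\underline\varphi_j, \underline\psi_j)$ with $\vartheta$-weighted coefficients $\underline F_j$, $\underline G_j$, $M_j$; solve it using $\underline{\bs\Delta}_j \geq \bs\Delta_j$ to control the inverse; and finally telescope. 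You do flag the fixed-point/coupling subtlety and the role of $\vartheta$ at the end of your write-up, but your Step 1 replacement would already have destroyed the $j$-dependence that these observations are about, so the two halves of your plan are incompatible.
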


The core of the paper is devoted to the proof of Lemma \ref{lemma:chi3}. 
This proof is provided in Section \ref{sec:proof-clt-2}. The proof of 
Lemma \ref{lemma:chi4} follows the same canvas with minor differences. 
Elements of this proof are given in Section \ref{sec:proof-clt-3}. 

\section{Proof of Theorem \ref{th:CLT} (part II)} 
\label{sec:proof-clt-2}
This section is devoted to the proof of Lemma \ref{lemma:chi3}.
We begin with the following lemma which implies that 
$\sum_{j=1}^n \chi_{3j}$ can be replaced by its expectation.

\begin{lemma}
\label{lem:control_variances}
For any $N \times 1$ vector $a$ with bounded Euclidean norm, we have,
$$
\max_j \var (a^*(\E_j Q)D(\E_j Q) a)\ =\ {\mathcal O}(n^{-1}) \quad \textrm{and}\quad 
\max_j \var\left(\tr \, (\E_j Q) D (\E_j Q) D \right) \ = \ {\mathcal O}(1) . 
$$
\end{lemma}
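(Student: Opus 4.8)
The plan is to control each variance by a martingale-difference decomposition with respect to the filtration $({\mathcal F}_k)$ generated by the columns $y_1,\dots,y_k$, exactly as in the classical Bai--Silverstein scheme, taking advantage of the fact that $\E_j Q$ is an integrated (conditional-expectation) object so that adding or removing one of the first $j$ columns perturbs it only mildly. For a generic random variable $Z$ measurable with respect to ${\mathcal F}_n$ one writes $Z - \E Z = \sum_{k=1}^n (\E_k - \E_{k-1}) Z$ and $\var(Z) = \sum_{k=1}^n \E |(\E_k-\E_{k-1})Z|^2$; since $\E_j Q$ depends on the columns only through $Q$, and $Q$ is measurable with respect to the $\sigma$-field generated by \emph{all} $n$ columns, each increment $(\E_k-\E_{k-1})Z$ can be rewritten using the column-removal identity $(\E_k - \E_{k-1})Z = \E_k(Z - Z^{(k)}) - \E_{k-1}(Z - Z^{(k)})$ where $Z^{(k)}$ is the analogue of $Z$ with column $k$ deleted (and hence ${\mathcal F}_{k-1}$-measurable after the relevant conditioning). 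This reduces everything to $L^2$ bounds on $Z - Z^{(k)}$.

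First I would treat the quadratic form $Z = a^*(\E_j Q) D (\E_j Q) a$. Using $\E_j Q = \E_j[Q]$ and the rank-one perturbation formula \eqref{eq:woodbury}, one gets $Q - Q^{(k)} = z \tilde q_{kk} Q^{(k)}\eta_k \eta_k^* Q^{(k)}$, a rank-one matrix with bounded operator norm (since $0 < \tilde q_{kk}(-\rho) < \rho^{-1}$ and $|z|=\rho$). Substituting into the bilinear-in-$\E_j Q$ expression and expanding, $Z - Z^{(k)}$ becomes a sum of a bounded number of terms, each of the form (bounded scalar)$\times (a^* (\E_j Q^{(k)}) D (\E_j Q^{(k)})\eta_k)(\eta_k^* (\E_j Q^{(k)}) a)$ or similar, i.e. bilinear forms $u^* (\E_j Q^{(k)}) v$ with at least one factor $\eta_k$. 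Pulling $\E_k$ or $\E_{k-1}$ outside and using Jensen, then applying Theorem \ref{theo:quadra}-\eqref{quadra-sum-bounded} (together with its $\tilde Q$ counterpart, and the uniform boundedness of $\|\E_j Q^{(k)}\|\le\rho^{-1}$), each such bilinear form has second moment ${\mathcal O}(n^{-1})$ summed over $k$ — note $\eta_k = a_k + y_k$, so the $a_k$ part is handled by \eqref{quadra-sum-bounded} and the $y_k$ part by Lemma \ref{lm-approx-quadra}/\eqref{eq:BS-lemma-2}. Hence $\sum_{k=1}^n \E|(\E_k-\E_{k-1})Z|^2 = {\mathcal O}(n^{-1})$, uniformly in $j$, which is the first claim. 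The only subtlety is that when $k \le j$, removing column $k$ also changes the conditioning set defining $\E_j$; this is harmless because $\E_j Q$ then just becomes the conditional expectation over one fewer variable and the perturbation between the two filtrations again costs a rank-one resolvent correction, absorbed into the same estimates.

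For the trace term $Z = \tr(\E_j Q) D (\E_j Q) D$ the same decomposition applies, but now $Z - Z^{(k)}$ expands into terms like (bounded scalar)$\times \tr[(\E_j Q^{(k)}) D (\E_j Q^{(k)}) \eta_k \eta_k^* (\E_j Q^{(k)}) D]$, which after the conditioning becomes a quadratic form $\eta_k^* M_k \eta_k$ with $\|M_k\|$ bounded by $\dmax^2 \rho^{-3}$; by Lemma \ref{lm-approx-quadra} (the $\eta_k$ version, splitting $\eta_k = a_k + y_k$ and using $\|a_k\|\le\amax$, $\tr M_k M_k^* = {\mathcal O}(n)$) each increment has second moment ${\mathcal O}(1)$ — not $o(1)$, because a trace of an $N\times N$ product scales like $n$, so each of the $n$ increments is ${\mathcal O}(1/n^0)\cdot$ wait, more precisely $\E|\eta_k^* M_k\eta_k - \tr M_k|^2 = {\mathcal O}(n)$ and there is no centering gain here, giving $\sum_k {\mathcal O}(n^{-?})$; the correct bookkeeping is $\E|(\E_k-\E_{k-1})Z|^2 = {\mathcal O}(1)$ per $k$ only after noting the martingale-difference structure forces a cancellation of the deterministic part, leaving $\sum_{k=1}^n {\mathcal O}(1) \cdot n^{-1}\cdot n = {\mathcal O}(1)$ overall after accounting for the $n^{-1}$ gain from centering each $\eta_k^* M_k \eta_k$ relative to $\tr M_k$ scaled by the $n^{-1/2}$ normalisation inside $y_k$. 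I expect the main obstacle to be precisely this variance-of-a-trace accounting: one must verify that the $n$ martingale increments each of squared size ${\mathcal O}(1)$ do \emph{not} sum to ${\mathcal O}(n)$ but to ${\mathcal O}(1)$, which requires carefully tracking that every factor of $\eta_k$ carries the $n^{-1/2}$ normalisation from $y_k$ (or a bounded-norm $a_k$ controlled by Theorem \ref{theo:quadra}-\eqref{quadra-sum-bounded}), so that each increment is in fact ${\mathcal O}(n^{-1/2})$ in $L^2$ and the sum of squares is ${\mathcal O}(1)$ as claimed. Everything else is routine application of the estimates already collected in Section \ref{sec:notations}.
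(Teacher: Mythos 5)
Your overall strategy is the same as the paper's: decompose the centered quantity as a sum of martingale differences $(\E_i - \E_{i-1})$, use the rank-one identity $Q = Q_i - \rho\tilde q_{ii}Q_i\eta_i\eta_i^*Q_i$ to isolate the $y_i$-dependence, and control the resulting quadratic/bilinear forms in $\eta_i$ via Lemma~\ref{lm-approx-quadra} and Theorem~\ref{theo:quadra}--\eqref{quadra-sum-bounded}. Two small remarks on the setup: since $\E_j Q$ is ${\mathcal F}_j$-measurable, the martingale sum runs only over $i\le j$, not $i\le n$; and the exact mechanism the paper exploits is that the pure $Q_i\times Q_i$ term $a^*(\E_j Q_i)D(\E_j Q_i)a$ is independent of $y_i$ and hence has zero $(\E_i-\E_{i-1})$ increment, so only the cross terms (one $\eta_i\eta_i^*$) and the double term (two $\eta_i\eta_i^*$) survive. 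Your $Z-Z^{(k)}$ formulation is a logically equivalent way to express this cancellation, and your sketch of the quadratic-form case is, modulo the schematic factorization you wrote, essentially the paper's argument.

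The genuine gap is the trace term, and you flag it yourself. Your bookkeeping there is self-contradictory: you first assert each increment has second moment ${\mathcal O}(1)$, then mid-sentence revise to ${\mathcal O}(n^{-1/2})$ in $L^2$, and conclude with ``I expect the main obstacle to be precisely this variance-of-a-trace accounting,'' which means you have not actually verified the claim. Here is the correct accounting, which you need to make explicit: in the quadratic-form case the matrix $M_i = Q_i a a^*(\E_j Q_i)DQ_i$ is rank one with bounded norm, so $\tr M_iM_i^* = {\mathcal O}(1)$ and each centered increment is ${\mathcal O}(n^{-1})$ in $L^2$, summing to ${\mathcal O}(n^{-1})$. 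In the trace case the analogous matrix $N_i = Q_i D (\E_j Q_i) D Q_i$ is full rank with $\tr N_iN_i^* = {\mathcal O}(n)$; Lemma~\ref{lm-approx-quadra} with the $n^{-1/2}$ normalisation already built into $y_i$ gives
$\E|y_i^* N_i y_i - \tilde d_i n^{-1}\tr D N_i|^2 = {\mathcal O}(n^{-2}\tr N_iN_i^*) = {\mathcal O}(n^{-1})$,
so each increment is ${\mathcal O}(n^{-1/2})$ in $L^2$ and the $j\le n$ increments sum to ${\mathcal O}(1)$. Without pinning this down (and without handling the $a_i$-cross terms via Theorem~\ref{theo:quadra}--\eqref{quadra-sum-bounded}, which controls $\sum_i \E|u^*Q_ia_i|^2$ and supplies the compensating factor of $n^{-1}$), the proposed proof is incomplete.
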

Proof of Lemma \ref{lem:control_variances} is postponed to Appendix 
\ref{app:control_variances}. Observe that: 
\begin{align*}
\sum_{i=1}^j \chi_{3j} &= 
\frac{1}{n} \sum_{j=1}^n \rho^2 \tilde t_{jj}^2 
\left( \frac{\tilde d_j^2}{n}  \tr (\E_j Q_j) D (\E_j Q_j) D + 
2 \tilde d_j a_j^* (\E_j Q_j) D (\E_j Q_j) a_j \right) \\ 
&= 
\frac{1}{n} \sum_{j=1}^n \rho^2 \tilde t_{jj}^2 
\left( \frac{\tilde d_j^2}{n}  \tr (\E_j Q) D (\E_j Q) D + 
2 \tilde d_j a_j^* (\E_j Q_j) D (\E_j Q_j) a_j \right) 
+ {\mathcal O}(n^{-1}) \ , 
\end{align*} 
due to Lemma \ref{lem:rank1-perturbation}. Consider the following notations: 
\begin{eqnarray*}
\psi_j &=& \frac 1n \tr \E\left[ (\E_j Q) D (\E_j Q) D \right] = 
\frac 1n \tr \E\left[ (\E_j Q) D Q D \right]\ ,  \\ 
\zeta_{kj} &=& \E\left[ a_k^* (\E_j Q) D (\E_j Q) a_k \right] =  
\E\left[ a_k^* (\E_j Q) D Q a_k \right]\ ,   \\
\theta_{kj} &=& \E\left[ a_k^* (\E_j Q_k) D (\E_j Q_k) a_k \right] =  
\E\left[ a_k^* (\E_j Q_k) D Q_k a_k \right]\ ,   \\
\varphi_j &=& 
\frac 1n \sum_{k=1}^j \rho^2 \tilde d_k \tilde t_{kk}^2 \theta_{kj} \ . 
\end{eqnarray*} 
Thanks to Lemma \ref{lem:control_variances}, we only need to show that 
\begin{equation}
\frac 1n \sum_{j=1}^n \left( \rho^2 \tilde t_{jj}^2 \tilde d_j^2 \psi_j 
+ 2 \rho^2 \tilde d_j \tilde t_{jj}^2  \theta_{jj} \right)
\quad + \quad \log \Delta_n \quad \xrightarrow[n\to\infty]{} 0\ . 
\label{eq-cvg-mean-chi3} 
\end{equation} 

There are structural links between the various
quantities $\psi_j$, $\zeta_{kj}$, $\theta_{kj}$ and $\varphi_j$. The
idea behind the proof is to establish the equations between these
quantities. Solving these equations will yield explicit expressions
which will enable to identify $\frac 1n \sum_{j=1}^n \left( \rho^2
  \tilde t_{jj}^2 \tilde d_j^2 \psi_j + 2 \rho^2 \tilde d_j \tilde
  t_{jj}^2 \theta_{jj} \right)$ as the deterministic quantity 
$-\log \Delta_n$ up to a vanishing error term. 

Proof of \eqref{eq-cvg-mean-chi3} is broken down into four steps. In
the first step, we establish an equation between $\zeta_{kj}$,
$\psi_j$ and $\varphi_j$ (up to ${\mathcal O}(n^{-1/2})$):
Eq. \eqref{eq:zeta=f(psi,phi)}. In the second step, we establish an
equation between $\psi_j$ and $\varphi_j$:
Eq. \eqref{eq:psi-final}. In the third step, we establish an equation
between $\zeta_{kj}$, $\psi_j$ and $\theta_{kj}$:
Eq. \eqref{eq:zeta=f(psi,theta)}. Gathering these results, we obtain a
$2\times 2$ linear system \eqref{eq:system-2x2} whose solutions are
$\psi_j$ and $\varphi_j$. In the fourth step, we solve this system and
finally establish \eqref{eq-cvg-mean-chi3}.

\subsection{Step 1: Expression of 
$\zeta_{kj} = \E [a_k^* (\E_j Q) D Q a_k]$} Writing 
\begin{equation} 
\label{eq-resolvent-identity} 
Q = T + T ( T^{-1} - Q^{-1} ) Q = 
T + T\left( \rho \tilde \delta D + A (I + \delta \tilde D)^{-1} A^* 
- \Sigma\Sigma^* \right) Q\ ,
\end{equation} 
we have: 
\begin{align} 
\zeta_{kj} &= \E \left[ a_k^* 
\E_j\left[ T + T\left( \rho \tilde \delta D + A (I + \delta \tilde D)^{-1} A^* 
- \Sigma\Sigma^* \right) Q \right] D Q a_k \right]\ ,  \nonumber \\ 
&= 
\E [ a_k^* T D Q a_k ]  
+ \rho \tilde\delta \, \E[ a_k^* T D (\E_j Q) D Q a_k ] \nonumber \\
&\phantom{=}  
+ \E[ a_k^* TA (I + \delta \tilde D)^{-1} A^* (\E_j Q) D Q a_k ] 
- \E[ a_k^* T (\E_j \Sigma\Sigma^* Q ) D Q a_k ] \ , \label{eq:zeta1} \\
&\stackrel{\triangle}= 
a_k^* T D T a_k + \rho \tilde\delta \, \E[ a_k^* T D (\E_j Q) D Q a_k ] 
+ X + Z + \varepsilon\ , 
\label{eq:zetakj} 
\end{align} 
where $X$ and $Z$ are the last two terms at the r.h.s.~of \eqref{eq:zeta1}
and where $|\varepsilon| ={\mathcal O}(n^{-1/2})$ by Theorem 
\ref{theo:quadra}-\eqref{conv-quadra}. Beginning with $X$, we have 
\begin{align*} 
X &= 
\sum_{\ell=1}^n \frac{\E[ a_k^* T a_\ell a_\ell^* (\E_j Q) D Q a_k]}  
{1+ \delta \tilde d_\ell} \\
&= 
\sum_{\ell=1}^n \frac{\E[ a_k^* T a_\ell a_\ell^* (\E_j Q_\ell) D Q a_k]}  
{1+ \delta \tilde d_\ell} 
- \sum_{\ell=1}^n \frac{\E[ \rho \tilde t_{\ell\ell}
a_k^* T a_\ell a_\ell^* (\E_j Q_\ell \eta_\ell \eta_\ell^* Q_\ell) D Q a_k]}  
{1+ \delta \tilde d_\ell} 
+ \varepsilon_1 \ ,\\
&= 
\sum_{\ell=1}^n \frac{\E[ a_k^* T a_\ell a_\ell^* (\E_j Q_\ell) D Q a_k]}  
{1+ \delta \tilde d_\ell} 
- \sum_{\ell=1}^n \frac{\E[ \rho \tilde t_{\ell\ell}
a_k^* T a_\ell a_\ell^* (\E_j Q_\ell a_\ell \eta_\ell^* Q_\ell) D Q a_k]}  
{1+ \delta \tilde d_\ell} 
+ \varepsilon_1 + \varepsilon_2\ , \\
&= \sum_{\ell=1}^n \frac{\E[ a_k^* T a_\ell a_\ell^* (\E_j Q_\ell) D Q a_k]}  
{1+ \delta \tilde d_\ell} 
- \sum_{\ell=1}^n \frac{\E[ \rho \tilde t_{\ell\ell}
a_k^* T a_\ell a_\ell^* {\mathcal T}_\ell a_\ell 
(\E_j \eta_\ell^* Q_\ell) D Q a_k]}{1+ \delta \tilde d_\ell} 
+ \varepsilon_1 + \varepsilon_2 + \varepsilon_3\ , \\
&\stackrel{\triangle}= X_1 + X_2 + \varepsilon_1 + \varepsilon_2 + \varepsilon_3 \ ,
\end{align*} 
where 
\begin{eqnarray}
\varepsilon_1 &=& 
- \sum_{\ell=1}^n \frac{\E[ 
a_k^* T a_\ell (\E_j \, ( \rho \tilde q_{\ell\ell} - \rho \tilde t_{\ell\ell} )
a_\ell^* Q_\ell \eta_\ell \eta_\ell^* Q_\ell) D Q a_k]}  
{1+ \delta \tilde d_\ell} \ , \label{eq:epsilon_1} \\  
\varepsilon_2 &=& 
- \sum_{\ell=1}^n \frac{\E[ \rho \tilde t_{\ell\ell}
a_k^* T a_\ell a_\ell^* (\E_j Q_\ell y_\ell \eta_\ell^* Q_\ell) D Q a_k]}  
{1+ \delta \tilde d_\ell}\ ,  \nonumber \\
\varepsilon_3 &=& 
- \sum_{\ell=1}^n \frac{\E[ \rho \tilde t_{\ell\ell}
a_k^* T a_\ell (\E_j a_\ell ( Q_\ell - {\mathcal T}_\ell) a_\ell \, 
\eta_\ell^* Q_\ell) D Q a_k]}{1+ \delta \tilde d_\ell} \ . \nonumber 
\end{eqnarray} 
Using \eqref{eq:diag-coresolvent} and \eqref{eq:woodbury2}, $\varepsilon_1$
can be written as: 
\[
\varepsilon_1 = \E \left[ \E_j \left( a_k^* T A \, 
\diag(\xi_\ell) \, (I+\delta \tilde D)^{-1} \Sigma^* Q \right) 
D Q a_k \right]\ , 
\]
where $\xi_\ell = \rho ( \tilde q_{\ell\ell} - \tilde t_{\ell\ell} )
(1 + \eta_\ell^* Q_\ell \eta_\ell) a_\ell^* Q_\ell
\eta_\ell$. Recalling that $\| \Sigma^* Q \|$ is bounded, we obtain
$|\varepsilon_1 | \leq K \E \| a_k^* T A \diag(\xi_\ell) \| \leq K (
\sum_{\ell=1}^n |[ a_k^* T A ]_\ell|^2 \E \xi_\ell^2 )^{1/2} \leq K /
\sqrt{n}$ by \eqref{eq:estimate-t-tilde} and the boundedness of 
$\E|X_{11}|^{16}$ (Assumption {\bf A\ref{ass:X}}). We show similarly that
$\varepsilon_2$ and $\varepsilon_3$ (with the help of Theorem
\ref{theo:quadra}-\eqref{eq-u(Qj-Tj)v}) are of order ${\mathcal
  O}(n^{-1/2})$. We now develop $X_2$ as:
\begin{align*} 
X_2 &=  
- \sum_{\ell=1}^n \frac{\E[ \rho \tilde t_{\ell\ell} \, 
a_k^* T a_\ell \, a_\ell^* {\mathcal T}_\ell a_\ell \, 
a_\ell^* (\E_j Q_\ell) D Q a_k]}{1+ \delta \tilde d_\ell} 
- \sum_{\ell=1}^j \frac{\E[ \rho \tilde t_{\ell\ell} \, 
a_k^* T a_\ell \, a_\ell^* {\mathcal T}_\ell a_\ell \, 
y_\ell^* (\E_j Q_\ell) D Q a_k]}{1+ \delta \tilde d_\ell} \ ,\\ 
&\stackrel{\triangle}= U_1 + U_2 \ . 
\end{align*}
The term $U_2$ can be expressed as: 
\begin{align*}
U_2 &= 
\sum_{\ell=1}^j \frac{\E[ \rho^2 \tilde t_{\ell\ell}\tilde q_{\ell\ell} \, 
a_k^* T a_\ell \, a_\ell^* {\mathcal T}_\ell a_\ell \, 
y_\ell^* (\E_j Q_\ell) D Q_\ell \eta_\ell \, \eta_\ell^* Q_\ell a_k]}
{1+ \delta \tilde d_\ell}\ , \\
&= 
\sum_{\ell=1}^j \frac{\E[ \rho^2 \tilde t^2_{\ell\ell} \, 
a_k^* T a_\ell \, a_\ell^* {\mathcal T}_\ell a_\ell \, 
y_\ell^* (\E_j Q_\ell) D Q_\ell \eta_\ell \, \eta_\ell^* Q_\ell a_k]}
{1+ \delta \tilde d_\ell} + {\mathcal O}(n^{-1/2}) \ . 
\end{align*} 
Write $\eta_\ell \eta_\ell^* 
= a_\ell a_\ell^* + a_\ell y_\ell^* + y_\ell y_\ell^* + y_\ell a_\ell^*$. 
The term in $a_\ell a_\ell^*$ is zero. Turning to the term in 
$a_\ell y_\ell^*$, we have 
$\E| y_\ell^* (\E_j Q_\ell) D Q_\ell a_\ell \, y_\ell^* Q_\ell a_k| 
= {\mathcal O}(n^{-1})$, hence 
\[
\sum_{\ell=1}^j \frac{\left| \E[ \rho^2 \tilde t^2_{\ell\ell} \, 
a_k^* T a_\ell \, a_\ell^* {\mathcal T}_\ell a_\ell \,  
y_\ell^* (\E_j Q_\ell) D Q_\ell a_\ell \, y_\ell^* Q_\ell a_k] \right|}
{1+ \delta \tilde d_\ell}  
\ \leq\  
\frac Kn \sum_{\ell=1}^j | a_k^* T a_\ell |\ \leq\ \frac{K}{\sqrt{n}} . 
\]
Moreover, 
\begin{eqnarray*} 
\lefteqn{\sum_{\ell=1}^j \frac{\left|\E[ \rho^2 \tilde t^2_{\ell\ell} \, 
a_k^* T a_\ell \, a_\ell^* {\mathcal T}_\ell a_\ell \, 
y_\ell^* (\E_j Q_\ell) D Q_\ell y_\ell \, y_\ell^* Q_\ell a_k]\right|}
{1+ \delta \tilde d_\ell}}  \\ 
&=& 
\sum_{\ell=1}^j \frac{\left|\E\left[ \rho^2 \tilde t^2_{\ell\ell} \, 
a_k^* T a_\ell \, a_\ell^* {\mathcal T}_\ell a_\ell \, 
\left( y_\ell^* (\E_j Q_\ell) D Q_\ell y_\ell - 
\tilde d_\ell n^{-1} \tr D (\E_j Q_\ell) D Q_\ell \right) 
y_\ell^* Q_\ell a_k\right]\right|}
{1+ \delta \tilde d_\ell}  \\ 
&\leq& \frac Kn \sum_{\ell=1}^j | a_k^* T a_\ell | \quad=\quad  {\mathcal O}(n^{-1/2})\ .
\end{eqnarray*} 
The term in $y_\ell a_\ell^*$ is written as 
\[
\sum_{\ell=1}^j \frac{\E[ \rho^2 \tilde t^2_{\ell\ell} \, 
a_k^* T a_\ell \, a_\ell^* {\mathcal T}_\ell a_\ell \, 
y_\ell^* (\E_j Q_\ell) D Q_\ell y_\ell \, a_\ell^* Q_\ell a_k]}
{1+ \delta \tilde d_\ell}
= 
\psi_j 
\sum_{\ell=1}^j \frac{\E[ \rho^2 \tilde d_\ell \tilde t^2_{\ell\ell} \, 
a_k^* T a_\ell \, a_\ell^* {\mathcal T}_\ell a_\ell \, a_\ell^* Q_\ell a_k]}
{1+ \delta \tilde d_\ell} + \varepsilon 
\]
where $\varepsilon ={\mathcal O}(n^{-1})$ by Lemmas \ref{lem:control_variances}
and \ref{lem:rank1-perturbation}. The remaining term in the r.h.s. can be 
handled by the following lemma which is proven in appendix 
\ref{anx-lemma-E(aQa)}: 
\begin{lemma}
\label{lm-E(aQa)}
Let $(u)=(u_n)_{n\in \mathbb{N}}$ be a sequence of vectors with bounded 
Euclidean norms. Let $(\alpha_\ell)_{1\le \ell\le n} = 
(\alpha_{\ell,n})_{1\le \ell\le n}$ be an array of bounded real numbers. 
Then:
\[
\sum_{\ell=1}^j \alpha_\ell \, u^* T a_\ell \, \E\left[ a_\ell Q_\ell u \right]
= 
\sum_{\ell=1}^j 
\frac{\alpha_\ell \, u^* T a_\ell \, a_\ell^* T u }
{\rho\tilde t_{\ell\ell} (1 + \tilde d_\ell \delta)} 
+ {\mathcal O}(n^{-1/2}) \ . 
\]
\end{lemma}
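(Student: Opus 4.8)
The plan is to rewrite the statement in terms of the matrices ${\mathcal T}_\ell$ of \eqref{eq:T-cal-j}. Applying \eqref{eq:T-cal(T)_j} with $b=u$ and $z=-\rho$ gives $a_\ell^* T u/(\rho\tilde t_{\ell\ell}(1+\tilde d_\ell\delta)) = a_\ell^*{\mathcal T}_\ell u$, so the right-hand side of the lemma is exactly $\sum_{\ell=1}^j \alpha_\ell (u^* T a_\ell)(a_\ell^*{\mathcal T}_\ell u)$, and it is enough to prove that
\[
L(\alpha) \ :=\ \sum_{\ell=1}^j \alpha_\ell\,(u^* T a_\ell)\bigl(\E[a_\ell^* Q_\ell u]-a_\ell^*{\mathcal T}_\ell u\bigr) \ =\ {\mathcal O}(n^{-1/2}) .
\]
The whole argument rests on the elementary estimate $\sum_{\ell=1}^n|u^* T a_\ell|^2 = \|A^* T^* u\|^2 \le \|A\|^2\|T\|^2\|u\|^2 = {\mathcal O}(1)$, which by Cauchy--Schwarz yields $\sum_{\ell=1}^j|u^* T a_\ell| = {\mathcal O}(\sqrt n)$ and $\sum_{\ell=1}^j|u^* T a_\ell|\,|c_\ell| \le K(\sum_{\ell\le j}|c_\ell|^2)^{1/2}$ for any array $(c_\ell)$. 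Hence an array of per-index errors of size ${\mathcal O}(n^{-1})$ sums to ${\mathcal O}(n^{-1/2})$, and so does any per-index error carrying an extra factor $a_\ell^* T u$ (of the same ``$\ell^2$-type'' as $u^* T a_\ell$) even when it is only ${\mathcal O}(n^{-1/2})$ per index; this second mechanism is indispensable, because $a_\ell^* Q_\ell u$ concentrates only at rate $n^{-1/2}$.

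The computation starts from the rank-one identity \eqref{eq:woodbury}, which at $z=-\rho$ reads $a_\ell^* Q_\ell u = a_\ell^* Q u + \rho\,\tilde q_{\ell\ell}(a_\ell^* Q_\ell\eta_\ell)(\eta_\ell^* Q_\ell u)$. Writing $\eta_\ell=a_\ell+y_\ell$ and taking expectations, I would: (i) replace $\tilde q_{\ell\ell}$ by $\tilde b_\ell$, which depends only on $(y_m)_{m\ne\ell}$, via $\tilde q_{\ell\ell}-\tilde b_\ell=-\rho\,\tilde q_{\ell\ell}\tilde b_\ell e_\ell$ from \eqref{eq:diff-q-b}; since $\E[e_\ell\mid(y_m)_{m\ne\ell}]=0$ (clear from \eqref{eq:def-e}) and the Bai--Silverstein bounds \eqref{eq:BS-lemma-2} apply, the cost of this is ${\mathcal O}(n^{-1})$ per index; (ii) observe that the two terms of $(a_\ell^* Q_\ell\eta_\ell)(\eta_\ell^* Q_\ell u)$ linear in the entries of $y_\ell$ have vanishing conditional expectation, hence contribute ${\mathcal O}(n^{-1})$, and that $(a_\ell^* Q_\ell y_\ell)(y_\ell^* Q_\ell u)$ has conditional expectation $\tfrac{\tilde d_\ell}{n}a_\ell^* Q_\ell D Q_\ell u$, again ${\mathcal O}(n^{-1})$; (iii) for the remaining term $(a_\ell^* Q_\ell a_\ell)(a_\ell^* Q_\ell u)$, replace $\tilde b_\ell$ by $\tilde t_{\ell\ell}$ (using \eqref{eq:estimate-t-tilde}) and use Theorem \ref{theo:quadra}-\eqref{eq-u(Qj-Tj)v} to write
\[
\E\bigl[(a_\ell^* Q_\ell a_\ell)(a_\ell^* Q_\ell u)\bigr] = (a_\ell^*{\mathcal T}_\ell a_\ell)\,\E[a_\ell^* Q_\ell u] + (a_\ell^*{\mathcal T}_\ell u)\bigl(\E[a_\ell^* Q_\ell a_\ell]-a_\ell^*{\mathcal T}_\ell a_\ell\bigr)+{\mathcal O}(n^{-1}),
\]
the covariance and the product of two biases being ${\mathcal O}(n^{-1})$. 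The residuals of the $\tilde b_\ell\to\tilde t_{\ell\ell}$ step and the term $(a_\ell^*{\mathcal T}_\ell u)(\E[a_\ell^* Q_\ell a_\ell]-a_\ell^*{\mathcal T}_\ell a_\ell)$ are only ${\mathcal O}(n^{-1/2})$ per index, but they carry the factor $a_\ell^*{\mathcal T}_\ell u$, proportional to $a_\ell^* T u$ by \eqref{eq:T-cal(T)_j}, so they are of $\ell^2$-type and their weighted sums are ${\mathcal O}(n^{-1/2})$. Finally the $a_\ell^* Q u$ contribution reassembles into a single bilinear form, $\sum_{\ell=1}^j\alpha_\ell(u^* T a_\ell)\E[a_\ell^* Q u] = \E\bigl[u^* T A_{1:j}\diag(\alpha_1,\dots,\alpha_j)A_{1:j}^* Q u\bigr]$, whose bias is ${\mathcal O}(n^{-1/2})$ by Theorem \ref{theo:quadra}-\eqref{conv-quadra} and whose $T$-value is $\sum_\ell\alpha_\ell(u^* T a_\ell)(a_\ell^* T u)$.

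Collecting these facts and using \eqref{eq:t-tilde-jj} in the form $\rho\tilde t_{\ell\ell}(1+\tilde d_\ell\delta)=1-\rho\tilde t_{\ell\ell}\,a_\ell^*{\mathcal T}_\ell a_\ell$ together with \eqref{eq:T-cal(T)_j}, the deterministic terms cancel and I arrive at the self-referential relation
\[
L(\alpha) = L(\alpha c) + {\mathcal O}\bigl(m_\alpha\,n^{-1/2}\bigr), \qquad c_\ell := \rho\,\tilde t_{\ell\ell}\,a_\ell^*{\mathcal T}_\ell a_\ell = \frac{a_\ell^*{\mathcal T}_\ell a_\ell}{1+a_\ell^*{\mathcal T}_\ell a_\ell+\tilde d_\ell\delta},
\]
where $(\alpha c)_\ell=\alpha_\ell c_\ell$, $m_\alpha:=\max_{1\le\ell\le j}|\alpha_\ell|$, and the implied constant is absolute — which is exactly what the $\ell^2$-bound on $(u^* T a_\ell)$ buys. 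Since $a_\ell^*{\mathcal T}_\ell a_\ell\le\|a_\ell\|^2\|{\mathcal T}_\ell\|\le\amax^2/\rho$ (as $\|{\mathcal T}_\ell\|\le\rho^{-1}$ by \eqref{eq:T-cal-j}), one has $0\le c_\ell\le\amax^2/(\rho+\amax^2)=:\theta<1$ uniformly in $\ell$ and $n$. Iterating $p$ times, $L(\alpha)=L(\alpha c^p)+{\mathcal O}(m_\alpha n^{-1/2})\sum_{k=0}^{p-1}\theta^k$, while $|L(\alpha c^p)|\le\theta^p m_\alpha\sum_{\ell\le j}|u^* T a_\ell|\,|\E[a_\ell^*(Q_\ell-{\mathcal T}_\ell)u]|\le\theta^p m_\alpha K\bigl(\sum_{\ell\le j}\E|a_\ell^*(Q_\ell-{\mathcal T}_\ell)u|^2\bigr)^{1/2}={\mathcal O}(\theta^p)$ by Theorem \ref{theo:quadra}-\eqref{eq-u(Qj-Tj)v}; letting $p\to\infty$ gives $L(\alpha)={\mathcal O}(n^{-1/2})$, hence the lemma.

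The main obstacle I foresee is the term-by-term bookkeeping in the second step, in particular the replacement $\tilde b_\ell\to\tilde t_{\ell\ell}$: a crude bound there yields ${\mathcal O}(n^{-1/2})$ per index and therefore ${\mathcal O}(1)$ after summation, so one must carefully isolate the part of that residual proportional to $(u^* T a_\ell)(a_\ell^* T u)=|u^* T a_\ell|^2$, and likewise handle the bilinear bias $\E[a_\ell^* Q_\ell a_\ell]-a_\ell^*{\mathcal T}_\ell a_\ell$ only in this weighted sense. The point that one single such ``self-improving'' estimate, iterated geometrically, closes the argument is the conceptual heart of the proof.
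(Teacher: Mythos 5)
Your proposal is correct and reaches the same conclusion, but by a genuinely different route from the paper's. The paper's proof is a two-line argument that divides the quantity $\E[a_\ell^* Q u]$ by $1-\rho\tilde t_{\ell\ell}a_\ell^*{\mathcal T}_\ell a_\ell = \rho\tilde t_{\ell\ell}(1+\tilde d_\ell\delta)$ (uniformly bounded below), thereby \emph{solving} the implicit fixed-point relation in one step; it then checks, via the rank-one identity \eqref{eq:woodbury}, that $\E[a_\ell^* Q u]/(1-c_\ell)$ agrees with $\E[a_\ell^* Q_\ell u]$ up to two error terms $\varepsilon_1,\varepsilon_2$, each handled by the weighted Cauchy--Schwarz inequality over $\ell$ using $\sum_\ell|u^*Ta_\ell|^2<\infty$ and Theorem~\ref{theo:quadra}-\eqref{quadra-sum-bounded}. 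You instead leave the recursion implicit and iterate it, establishing $L(\alpha)=L(\alpha c)+{\mathcal O}(m_\alpha n^{-1/2})$ with $c_\ell=\rho\tilde t_{\ell\ell}a_\ell^*{\mathcal T}_\ell a_\ell\le\theta<1$ uniformly, and letting $p\to\infty$. The two arguments use exactly the same key ingredients (the Woodbury identity, the lower bound on $1-c_\ell$, the $\ell^2$-bound on $u^*Ta_\ell$, and items \eqref{quadra-sum-bounded}, \eqref{conv-quadra}, \eqref{eq-u(Qj-Tj)v} of Theorem~\ref{theo:quadra}), so they are morally two presentations of the same linear-system observation; the paper's version is shorter because it performs the geometric sum analytically, whereas yours makes it explicit. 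One small imprecision: the cost of replacing $\tilde q_{\ell\ell}$ by $\tilde b_\ell$ is not ${\mathcal O}(n^{-1})$ per index as you assert (the extra factor $\tilde q_{\ell\ell}$ is not $(y_m)_{m\ne\ell}$-measurable, so conditioning on $e_\ell$ does not immediately kill the cross term); a crude bound gives ${\mathcal O}(n^{-1/2})$ per index, and one must use the ``$\ell^2$-type'' mechanism (or expand $\tilde q_{\ell\ell}$ once more via \eqref{eq:diff-q-b}) to make the weighted sum ${\mathcal O}(n^{-1/2})$. Since you already built this mechanism into your framework, this is a presentational slip rather than a gap.
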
 
Applying this lemma with 
$u= a_k$ and $\alpha_\ell = \rho^2 \tilde t_{\ell\ell}^2 \tilde d_\ell 
( 1 + \tilde d_\ell )^{-1} a_\ell {\mathcal T}_\ell a_\ell$, we obtain 
\[
U_2 = 
\psi_j 
\sum_{\ell=1}^j \frac{\rho \tilde d_\ell \tilde t_{\ell\ell} \, 
a_k^* T a_\ell \, a_\ell^* {\mathcal T}_\ell a_\ell \, a_\ell^* T a_k}
{(1+ \delta \tilde d_\ell)^2} + {\mathcal O}(n^{-1/2}) . 
\]
Gathering these results, and using the identity 
$(1 - \rho\tilde t_{\ell\ell} a_\ell^* {\mathcal T}_{\ell} a_\ell)
= \rho\tilde t_{\ell\ell} (1 + \tilde d_\ell \delta)$ 
(see \eqref{eq:t-tilde-jj}), we obtain 
\begin{equation}
X = \sum_{\ell=1}^n \rho \tilde t_{\ell\ell} \, a_k^* T a_\ell \, 
\E[ a_\ell^* (\E_j Q_\ell) D Q a_k ] 
+ \psi_j \sum_{\ell=1}^j \frac{\rho \tilde d_\ell \tilde t_{\ell\ell} \, 
a_k^* T a_\ell \, a_\ell^* {\mathcal T}_{\ell} a_\ell \, 
a_\ell^* T a_k}{(1 + \tilde d_\ell \delta)^2} 
+ {\mathcal O}(n^{-1/2}). 
\label{eq:X-in-zeta} 
\end{equation} 
We now turn to the term $Z$ in \eqref{eq:zetakj}. 
\[ 
Z = 
- \sum_{\ell=1}^n \E [ a_k^* T 
(\E_j \rho \tilde q_{\ell\ell} \eta_\ell\eta_\ell^* Q_\ell ) D Q a_k ]  
= 
- \sum_{\ell=1}^n \rho \tilde t_{\ell\ell} 
\E [ a_k^* T (\E_j \eta_\ell\eta_\ell^* Q_\ell ) D Q a_k ] 
+ \varepsilon \ ,
\]
where 
$$
\varepsilon = \sum_{\ell=1}^n \E[ a_k^* T \left( \E_j \,  
\rho (\tilde{q}_{\ell\ell} - \tilde{t}_{\ell\ell}) 
 \eta_\ell \eta_\ell^* Q_\ell \right) D Q a_k ]
$$ satisfies $\varepsilon = {\mathcal O}(n^{-1/2})$ 
(same arguments as for $\varepsilon_1$ in 
\eqref{eq:epsilon_1}). 
Writing $\eta_\ell \eta_\ell^* = a_\ell a_\ell^* + y_\ell y_\ell^* 
+ a_\ell y_\ell^* + y_\ell a_\ell^*$, we obtain:
\begin{align*} 
Z &= 
- \sum_{\ell=1}^n \rho \tilde t_{\ell\ell} \, a_k^* T a_\ell \, 
\E [a_\ell^* (\E_j Q_\ell ) D Q a_k ] \\
&\phantom{=} - \left( 
\sum_{\ell=1}^j \rho \tilde t_{\ell\ell} \, 
\E [ a_k^* T y_\ell \, y_\ell^* (\E_j Q_\ell ) D Q a_k ]  
+ \frac 1n \sum_{\ell=j+1}^n \rho \tilde t_{\ell\ell} \tilde d_\ell \, 
\E [ a_k^* T D (\E_j Q_\ell ) D Q a_k ] \right) \\ 
&\phantom{=} - \sum_{\ell=1}^j \rho \tilde t_{\ell\ell} \, a_k^* T a_\ell \, 
\E [ y_\ell^* (\E_j Q_\ell ) D Q a_k ]  
- \sum_{\ell=1}^j \rho \tilde t_{\ell\ell} \, 
\E [ a_k^* T y_\ell \, a_\ell^* (\E_j Q_\ell ) D Q a_k ] + 
{\mathcal O}(n^{-1/2}) \\
&\stackrel{\triangle}= Z_1 + Z_2 + Z_3 + Z_4 + {\mathcal O}(n^{-1/2}) . 
\end{align*} 
The term $Z_1$ cancels with the first term in the decomposition of $X$
(first term at the r.h.s.~of \eqref{eq:X-in-zeta}). The term $Z_2$
can be written as: 
\begin{eqnarray*} 
Z_2 &=& 
- \left( 
\sum_{\ell=1}^j \rho \tilde t_{\ell\ell} \, 
\E [ a_k^* T y_\ell \, y_\ell^* (\E_j Q_\ell ) D Q_\ell a_k ] 
+ \frac 1n 
\sum_{\ell=j+1}^n \rho \tilde t_{\ell\ell} \tilde d_\ell \, 
\E [ a_k^* T D (\E_j Q_\ell ) D Q a_k ] \right) \\ 
&& + \sum_{\ell=1}^j \rho^2 \tilde t_{\ell\ell}^2 \, 
\E [ a_k^* T y_\ell \, y_\ell^* 
(\E_j Q_\ell) D Q_\ell\eta_\ell \, \eta_\ell^* Q_\ell a_k ] + \varepsilon \\ 
&\stackrel{\triangle}=& W_1 + W_2 + \varepsilon \ ,
\end{eqnarray*} 
where $\varepsilon$ follows from the substitution of $\rho\tilde q_{\ell\ell}$
with $\rho\tilde t_{\ell\ell}$ and satisfies 
$\varepsilon = {\mathcal O}(n^{-1/2})$ as in \eqref{eq:epsilon_1}. 
Consider first $W_1$:
\[
W_1 = - 
\left( 
\frac 1n \sum_{\ell=1}^j \rho \tilde t_{\ell\ell} \tilde d_\ell 
\E [ a_k^* T D (\E_j Q_\ell ) D Q_\ell a_k ] 
+ 
\frac 1n \sum_{\ell=j+1}^n \rho \tilde t_{\ell\ell} \tilde d_\ell 
\E [ a_k^* T D (\E_j Q_\ell ) D Q a_k ] 
\right) . 
\]
Write: 
$$
(\E_j Q_\ell ) D Q_\ell - (\E_j Q) D Q = (\E_j Q_\ell) D (Q_\ell - Q) + 
(\E_j Q_\ell - \E_j Q) D Q\ .
$$ 
Using \eqref{eq:Qj=f(Q,eta)} and \eqref{eq:1+etaQeta},
\begin{eqnarray*} 
\frac 1n \sum_{\ell=1}^j \rho \tilde t_{\ell\ell} \tilde d_\ell 
\left| \E [ a_k^* T D (\E_j Q_\ell ) D (Q_\ell - Q) a_k ] \right|
&\leq& \frac Kn \sum_{\ell=1}^j 
\left( \E | (1 + \eta_\ell^* Q_\ell \eta_\ell) |^2 \right)^{1/2} 
\left( \E | \eta_\ell^* Q a_k |^2 \right)^{1/2} \\
&\leq& \frac{K}{\sqrt{n}} (\E a_k^* Q \Sigma_{1:j} \Sigma_{1:j}^* Q a_k)^{1/2}
\ =\ {\mathcal O}(n^{-1/2})\ ,
\end{eqnarray*} 
and the same arguments apply to the term $(\E_j Q_\ell - \E_j Q) D Q$. 
Hence,
\[
W_1 = 
- \rho \tilde\delta \, \E [ a_k^* T D (\E_j Q ) D Q a_k ] 
+ {\mathcal O}(n^{-1/2}) . 
\]
Turning to $W_2$, we have: 
\begin{multline*} 
\sum_{\ell=1}^j \rho^2 \tilde t_{\ell\ell}^2 \, 
\E [ a_k^* T y_\ell \, y_\ell^* (\E_j Q_\ell) D Q_\ell y_\ell \, 
a_\ell^* Q_\ell a_k ] \\ 
= 
\sum_{\ell=1}^j \rho^2 \tilde t_{\ell\ell}^2 \, 
\E \Bigl[ a_k^* T y_\ell \, 
\Bigl( y_\ell^* (\E_j Q_\ell) D Q_\ell y_\ell - 
\frac{\tilde d_\ell}{n} \tr D (\E_j Q_\ell) D Q_\ell \Bigr) 
a_\ell^* Q_\ell a_k \Bigr]  
\end{multline*} 
whose modulus is of order ${\mathcal O}(n^{-1/2})$. The term 
$$
\sum_{\ell=1}^j \rho^2 \tilde t_{\ell\ell}^2 
\E [ a_k^* T y_\ell y_\ell^* 
(\E_j Q_\ell) D Q_\ell a_\ell y_\ell^* Q_\ell a_k ]$$ 
can be handled similarly. 

The term 
\[
\sum_{\ell=1}^j \rho^2 \tilde t_{\ell\ell}^2 
\E [ a_k^* T y_\ell y_\ell^* 
(\E_j Q_\ell) D Q_\ell a_\ell a_\ell^* Q_\ell a_k ] 
= \frac 1n \sum_{\ell=1}^j \rho^2 \tilde t_{\ell\ell}^2 \tilde d_\ell 
\E [ a_k^* T D  (\E_j Q_\ell) D Q_\ell a_\ell a_\ell^* Q_\ell a_k ] 
\]
is bounded by 
$K n^{-1/2}$. Finally,
\begin{eqnarray*}
W_2 &=& 
\sum_{\ell=1}^j \rho^2 \tilde t_{\ell\ell}^2 
\E [ y_\ell^* Q_\ell a_k a_k^* T y_\ell \, 
y_\ell^* (\E_j Q_\ell) D Q_\ell y_\ell ] + {\mathcal O}(n^{-1/2}) \ , \\
&\stackrel{(a)}=& \psi_j \, a_k^* T D T a_k \, 
\frac 1n \sum_{\ell=1}^j \rho^2 \tilde t_{\ell\ell}^2 
\tilde d_\ell^2 + {\mathcal O}(n^{-1/2}) \ ,
\end{eqnarray*}
where $(a)$ follows by standard arguments as those already developed.

The term $Z_3$ satisfies 
\[
Z_3 = 
\sum_{\ell=1}^j \rho^2 \tilde t_{\ell\ell}^2 a_k^* T a_\ell 
\E [ y_\ell^* (\E_j Q_\ell ) D Q_\ell \eta_\ell \, \eta_\ell^* Q_\ell a_k ]  
+ {\mathcal O}(n^{-1/2}) . 
\]
Writing $\eta_\ell \eta_\ell^* = y_\ell y_\ell^* + a_\ell y_\ell^* +
a_\ell a_\ell^* + y_\ell a_\ell^*$ and relying arguments as those
already developed, one can check that the only non-negligible
contribution stems from the term containing $y_\ell a_\ell^*$. Hence,
\begin{eqnarray*}
Z_3 &=& 
\psi_j \sum_{\ell=1}^j \rho^2  \tilde t_{\ell\ell}^2 \tilde d_\ell
a_k^* T a_\ell \E [ a_\ell^* Q_\ell a_k ] + {\mathcal O}(n^{-1/2}) \ , \\
&=& 
\psi_j \sum_{\ell=1}^j \frac{\rho  \tilde t_{\ell\ell} \tilde d_\ell
a_k^* T a_\ell a_\ell^* T a_k }
{1 + \tilde d_\ell \delta} + {\mathcal O}(n^{-1/2}) \ ,
\end{eqnarray*}
by Lemma \eqref{lm-E(aQa)}. Similarly, 
\begin{eqnarray*}
Z_4&=&\sum_{\ell=1}^j \rho^2 \tilde t_{\ell\ell}^2  \, 
\E [ a_k^* T y_\ell \, 
a_\ell^* (\E_j Q_\ell ) D Q_\ell \eta_\ell \, 
\eta_\ell^* Q_\ell a_k ] + {\mathcal O}(n^{-1/2})\ ,\\
&=& a_k^* TDT a_k 
\frac 1n \sum_{\ell=1}^j \rho^2 \tilde d_\ell \tilde t_{\ell\ell}^2 
\E[ a_\ell^* (\E_j Q_\ell) D Q_\ell a_\ell ] + {\mathcal O}(n^{-1/2}) \ ,\\
&=& a_k^* TDT a_k \varphi_j + {\mathcal O}(n^{-1/2}) \ .
\end{eqnarray*}


Gathering these results, we obtain
\begin{multline*}
Z = 
- \sum_{\ell=1}^n \rho \tilde t_{\ell\ell} \, a_k^* T a_\ell \, 
\E [a_\ell^* (\E_j Q_\ell ) D Q a_k ] 
- \rho \tilde\delta \, \E [ a_k^* T D (\E_j Q ) D Q a_k ] \\ 
+ \psi_j \, a_k^* T D T a_k \, 
\frac 1n \sum_{\ell=1}^j \rho^2 \tilde t_{\ell\ell}^2 
\tilde d_\ell^2  
+ \psi_j \sum_{\ell=1}^j \frac{\rho  \tilde t_{\ell\ell} \tilde d_\ell
a_k^* T a_\ell a_\ell^* T a_k }
{1 + \tilde d_\ell \delta} 
+ a_k^* TDT a_k \, \varphi_j + {\mathcal O}(n^{-1/2}) . 
\end{multline*} 
Plugging this and Eq. \eqref{eq:X-in-zeta} into \eqref{eq:zetakj}, and 
noticing that 
$\rho\tilde t_{\ell\ell} ( a_\ell {\mathcal T}_\ell a_\ell 
(1 + \tilde d_\ell \delta)^{-1} + 1) = (1 + \tilde d_\ell \delta)^{-1}$, 
we obtain: 
\begin{multline}
\zeta_{kj} =  
a_k^* T D T a_k + \psi_j \left( 
\sum_{\ell=1}^j 
\frac{ a_k^* T a_\ell \, \tilde d_\ell \, a_\ell^* T a_k}
{(1 + \tilde d_\ell \delta)^2}  \ + \ 
a_k^* T D T a_k \, 
\frac 1n \sum_{\ell=1}^j \rho^2 \tilde t_{\ell\ell}^2 \tilde d_\ell^2  
\right) \\ 
+ a_k^* TDT a_k \, \varphi_j + {\mathcal O}(n^{-1/2}) \ . 
\label{eq:zeta=f(psi,phi)} 
\end{multline} 

\subsection{Step 2: Expression of 
$\psi_j = n^{-1} \tr \E [ (\E_j Q) D Q D ]$} 
Using Identity \eqref{eq-resolvent-identity}, we obtain:
\begin{align} 
\psi_j &= 
\frac{1}{n} \tr \E[ T DQD ] 
+ \frac{\rho \tilde\delta}{n} \tr \E[ TD (\E_j Q) D Q D ] \nonumber \\ 
&\phantom{=} 
+ \frac 1n \tr \E [ T A (I + \delta \tilde D)^{-1} A^* (\E_j Q) D Q D ] 
- \frac 1n \tr \E [ T (\E_j \Sigma\Sigma^* Q ) D Q D ] \ ,
\label{eq-terms-psi} \\
&= 
\frac{1}{n} \tr DTDT  
+ \frac{\rho \tilde\delta}{n} \tr \E[ TD (\E_j Q) D Q D ] 
+ X + Z + \varepsilon \ ,
\label{eq:psi} 
\end{align} 
where $X$ and $Z$ are the last two terms of the r.h.s.~of
\eqref{eq-terms-psi}, and where $\varepsilon ={\mathcal O}(n^{-1})$ by
Theorem \ref{theo:quadra}-\eqref{speed(T-Q)}. Due to the presence of
the multiplying factor $n^{-1}$, the treatment of $X$ and $Z$ is
simpler here than the treatment of their analogues for $\zeta_{kj}$.
We skip hereafter the details related to the bounds over the
$\varepsilon$'s. The term $X$ satisfies
\begin{align*} 
X &= 
\frac 1n \sum_{\ell=1}^n 
\frac{\E[ a_\ell^* (\E_j Q) D Q D  T a_\ell ]}{1 + \delta \tilde d_\ell} \ ,\\
&= 
\frac 1n \sum_{\ell=1}^n 
\frac{\E[ a_\ell^* (\E_j Q_\ell) D Q D  T a_\ell ]}{1 + \delta \tilde d_\ell} 
- 
\frac 1n \sum_{\ell=1}^n 
\frac{\E[ \rho \tilde t_{\ell\ell} \, 
( \E_j a_\ell^* Q_\ell \eta_\ell \, \eta_\ell^* Q_\ell ) D Q D T a_\ell ]}
{1 + \delta \tilde d_\ell} + \varepsilon \ ,\\
&= 
\frac 1n \sum_{\ell=1}^n 
\frac{\E[ a_\ell^* (\E_j Q_\ell) D Q D  T a_\ell ]}{1 + \delta \tilde d_\ell} 
- \frac 1n \sum_{\ell=1}^n \frac{\E[ \rho \tilde t_{\ell\ell} \, 
a_\ell^* {\mathcal T}_\ell a_\ell \, 
a_\ell^* ( \E_j Q_\ell ) D Q D T a_\ell ]}
{1 + \delta \tilde d_\ell} \\ 
&\phantom{=} 
- \frac 1n \sum_{\ell=1}^j \frac{\E[ \rho \tilde t_{\ell\ell} \, 
a_\ell^* {\mathcal T}_\ell a_\ell \, 
y_\ell^* ( \E_j Q_\ell ) D Q D T a_\ell ]}
{1 + \delta \tilde d_\ell} + \varepsilon' \ ,
\end{align*} 
where $\max(|\varepsilon|, |\varepsilon'| ) = {\mathcal O}(n^{-1/2})$. As 
$1 - \rho \tilde t_{\ell\ell} a_\ell^* {\mathcal T}_\ell a_\ell = 
\rho \tilde t_{\ell\ell} (1 + \tilde d_\ell \delta)$, 
\begin{align} 
X &= 
\frac 1n \sum_{\ell=1}^n 
\E[ \rho \tilde t_{\ell\ell} \, 
a_\ell^* (\E_j Q_\ell) D Q D  T a_\ell ] 
- \frac 1n \sum_{\ell=1}^j \frac{\E[ \rho \tilde t_{\ell\ell} \, 
a_\ell^* {\mathcal T}_\ell a_\ell \, 
y_\ell^* ( \E_j Q_\ell ) D Q D T a_\ell ]}
{1 + \delta \tilde d_\ell} + {\mathcal O}(n^{-1/2}) \ ,\nonumber \\ 
&= 
\frac 1n \sum_{\ell=1}^n 
\E[ \rho \tilde t_{\ell\ell} \, 
a_\ell^* (\E_j Q_\ell) D Q D  T a_\ell ] 
+ \frac 1n \sum_{\ell=1}^j \frac{\E[ \rho^2 \tilde t_{\ell\ell}^2 \, 
a_\ell^* {\mathcal T}_\ell a_\ell \, 
y_\ell^* ( \E_j Q_\ell ) D Q_\ell \eta_\ell \, \eta_\ell^* Q_\ell D T a_\ell ]}
{1 + \delta \tilde d_\ell} + {\mathcal O}(n^{-1/2}) \ ,\nonumber \\ 
&= 
\frac 1n \sum_{\ell=1}^n 
\E[ \rho \tilde t_{\ell\ell} \, 
a_\ell^* (\E_j Q_\ell) D Q D  T a_\ell ] 
+ \frac{\psi_j}{n} \sum_{\ell=1}^j 
\frac{\tilde d_\ell \, a_\ell^* T a_\ell \, a_\ell^* T D T a_\ell}
{(1 + \delta \tilde d_\ell)^3} + {\mathcal O}(n^{-1/2})\ , \label{eq:decompo-X}
\end{align} 
where \eqref{eq:T-cal(T)_j} is used to obtain the last equation. The term $Z$
can be expressed as: 
\begin{align*} 
Z &= 
- \frac 1n \sum_{\ell=1}^n 
\tr \E [\rho \tilde t_{\ell\ell} T (\E_j \eta_\ell \eta_\ell^* Q_\ell ) D Q D ] 
+ {\mathcal O}(n^{-1/2})\ , \\
&= 
- \frac 1n \sum_{\ell=1}^n 
\E [ \rho \tilde t_{\ell\ell} a_\ell^* (\E_j Q_\ell ) D Q D T a_\ell] \\
&\phantom{=}  
- \left( 
\frac 1n \sum_{\ell=1}^j 
\E [ \rho \tilde t_{\ell\ell} y_\ell^* (\E_j Q_\ell ) D Q D T y_\ell] 
+ \frac 1n \sum_{\ell=j+1}^n 
\frac 1n 
\tr \E [ \rho \tilde d_\ell \tilde t_{\ell\ell} T D (\E_j Q_\ell ) D Q D ] 
\right) \\
&\phantom{=} 
- \frac 1n \sum_{\ell=1}^j 
\E [ \rho \tilde t_{\ell\ell} y_\ell^* (\E_j Q_\ell ) D Q D T a_\ell ] 
- \frac 1n \sum_{\ell=1}^j 
\E [ \rho \tilde t_{\ell\ell} a_\ell^* (\E_j Q_\ell ) D Q D T y_\ell ] 
+ {\mathcal O}(n^{-1/2}) \ ,\\
&\stackrel{\triangle}= Z_1 + Z_2 + Z_3 + Z_4 + {\mathcal O}(n^{-1/2}) . 
\end{align*}
The term $Z_1$ cancels with the first term in the r.h.s. of $X$'s decomposition \eqref{eq:decompo-X}.
The terms $Z_2$, $Z_3$ and $Z_4$ satisfy: 
\begin{align*}
Z_2 &= 
- \frac{\rho\tilde\delta}{n} \tr \E [ T D (\E_j Q ) D Q D ] 
+ \frac 1n \sum_{\ell=1}^j 
\E [ \rho^2 \tilde t_{\ell\ell}^2 
y_\ell^* (\E_j Q_\ell ) D Q_\ell \eta_\ell \, \eta_\ell^* Q_\ell D T y_\ell] 
+ {\mathcal O}(n^{-1/2}) \\
&= 
- \frac{\rho\tilde\delta}{n} \tr \E [ T D (\E_j Q ) D Q D ] 
+ \psi_j \frac 1n \tr DTDT 
\frac{1}{n} \sum_{\ell=1}^j \rho^2 \tilde d_\ell^2 \tilde t_{\ell\ell}^2 
+ {\mathcal O}(n^{-1/2}) \ , \\
Z_3 &= 
\frac 1n \sum_{\ell=1}^j 
\E [ \rho^2 \tilde t_{\ell\ell}^2 y_\ell^* (\E_j Q_\ell ) D Q_\ell \eta_\ell \,
\eta_\ell^* Q_\ell D T a_\ell ] + {\mathcal O}(n^{-1/2}) \\
&= 
\psi_j \frac 1n \sum_{\ell=1}^j 
\rho \tilde d_\ell \tilde t_{\ell\ell} \, 
\frac{a_\ell^* TDT a_\ell}{1+\tilde d_\ell \delta}  
+ {\mathcal O}(n^{-1/2}) 
\quad \text{(see Th.\ref{theo:quadra}-(\ref{eq-u(Qj-Tj)v})} \ \text{and} 
\ \eqref{eq:T-cal(T)_j} \text{)}, \\ 
Z_4 &= 
\frac 1n \sum_{\ell=1}^j 
\E [ \rho^2 \tilde t_{\ell\ell}^2 
a_\ell^* (\E_j Q_\ell ) D Q_\ell \eta_\ell \, \eta_\ell^* Q_\ell D T y_\ell ] 
+ {\mathcal O}(n^{-1/2}) \\ 
&= 
\frac 1n \tr DTDT 
\frac 1n \sum_{\ell=1}^j \rho^2 \tilde d_\ell \tilde t_{\ell\ell}^2 
\E [ a_\ell^* (\E_j Q_\ell ) D Q_\ell a_\ell ] + {\mathcal O}(n^{-1/2}) . 
\end{align*}
Plugging these terms in \eqref{eq:psi}, we obtain: 
\begin{align} 
\psi_j &= 
\gamma + \psi_j \left( 
\frac{1}{n} \sum_{\ell=1}^j a_\ell^* T D T a_\ell \left( 
\frac{\tilde d_\ell \, a_\ell^* T a_\ell}
{(1 + \delta \tilde d_\ell)^3} 
+ \frac{\rho \tilde d_\ell \tilde t_{\ell\ell}}{1+\tilde d_\ell \delta} \right) 
+ 
\frac{\gamma}{n}  \sum_{\ell=1}^j \rho^2 \tilde d_\ell^2 \tilde t_{\ell\ell}^2 
\right)  
+ \gamma \varphi_j + {\mathcal O}(n^{-1/2}) \nonumber\ , \\
&= 
\gamma + \psi_j \left( 
\frac{1}{n} \sum_{\ell=1}^j \frac{ \tilde d_\ell \, a_\ell^* T D T a_\ell}
{(1 + \delta \tilde d_\ell)^2}  
+ 
\frac{\gamma}{n} \sum_{\ell=1}^j \rho^2 \tilde d_\ell^2 \tilde t_{\ell\ell}^2 
\right)  
+ \gamma \varphi_j + {\mathcal O}(n^{-1/2}) \ ,
\label{eq:psi-final} 
\end{align} 
using \eqref{eq:t-tilde-jj} and \eqref{eq:T-cal(T)_j}.

\subsection{Step 3: Relation between $\zeta_{kj}$ and $\theta_{kj}$ for
$k \leq j$} 

The term $\zeta_{kj}$ can be written as  
\begin{align*} 
\zeta_{kj} &= 
\E [ a_k^* \E_j(Q_k - \rho \tilde{q}_{kk} Q_k \eta_k \eta_k^* Q_k) D 
(Q_k - \rho \tilde{q}_{kk} Q_k \eta_k \eta_k^* Q_k) a_k ] \\
&= 
\theta_{kj}  
- \rho \tilde{t}_{kk} 
\E [ a_k^* \E_j (Q_k \eta_k \eta_k^* Q_k) D Q_k a_k ] 
- \rho \tilde{t}_{kk} 
\ \E[ a_k^* \E_j(Q_k) D Q_k \eta_k \, \eta_k^* Q_k a_k ] \\ 
&\phantom{=} 
+ \rho^2 \tilde{t}_{kk}^2 \,   
\E [ a_k^* \E_j( Q_k \eta_k \eta_k^* Q_k) D 
Q_k \eta_k \, \eta_k^* Q_k a_k ] + {\mathcal O}(n^{-1/2})   \\
&\stackrel{\triangle}= 
\theta_{kj} + X_1 + X_2 + X_3 + {\mathcal O}(n^{-1/2}) \ . 
\end{align*} 
Using similar arguments as those developed previously, we get:
\begin{eqnarray*}
X_1 &=& - \rho \tilde{t}_{kk} 
a_k^* {\mathcal T}_k a_k \, 
\E[ a_k^* \E_j(Q_k) D Q_k a_k ] + {\mathcal O}(n^{-1/2})
\ =\ - \rho \tilde{t}_{kk} 
a_k^* {\mathcal T}_k a_k \, \theta_{kj} + {\mathcal O}(n^{-1/2}),  
\ \\ 
X_2 &=& 
- \rho \tilde{t}_{kk} a_k^* {\mathcal T}_k a_k \ \theta_{kj} + 
{\mathcal O}(n^{-1/2}) \ . 
\end{eqnarray*} 
As $k\leq j$, 
\begin{eqnarray*}
X_3 &=& 
\rho^2 \tilde{t}_{kk}^2 \, \left( a_k^* {\mathcal T}_k a_k\right)^2 \,  
\E[ \eta_k^* \E_j(Q_k) D Q_k \eta_k ] +  {\mathcal O}(n^{-1/2}) \ ,\\ 
&=& \rho^2 \tilde{t}_{kk}^2  \, \left( a_k^* {\mathcal T}_k a_k\right)^2 \ 
\left( \theta_{kj} + \tilde d_k \psi_j\right) +  {\mathcal O}(n^{-1/2}) \ . 
\end{eqnarray*} 
Using \eqref{eq:t-tilde-jj} and \eqref{eq:T-cal(T)_j}, we finally obtain:
\begin{equation}
\label{eq:zeta=f(psi,theta)} 
\zeta_{kj} = 
\rho^2 \tilde t_{kk}^2 ( 1 + \tilde d_k \delta )^2 \, \theta_{kj} + \, 
\tilde d_k \left( \frac{a_k^* T a_k}{1 + \tilde d_k \delta} \right)^2 \, 
\psi_j \, 
+  {\mathcal O}(n^{-1/2}) \ . 
\end{equation}

\subsection{Step 4: A system of perturbed linear equations
in $(\psi_j, \varphi_j)$. Proof of \eqref{eq-cvg-mean-chi3}} 

Combining \eqref{eq:zeta=f(psi,theta)} with \eqref{eq:zeta=f(psi,phi)}, we
obtain
\begin{multline}
\rho^2 \tilde d_k \tilde t_{kk}^2 \theta_{kj} = 
\tilde d_k \frac{a_k^* T D T a_k}{( 1 + \tilde d_k \delta )^2} \\ 
+ \left( 
\sum_{\ell=1}^j 
\frac{ \tilde d_k \, a_k^* T a_\ell \, \tilde d_\ell \, a_\ell^* T a_k}
{(1 + \tilde d_k \delta)^2 (1 + \tilde d_\ell \delta)^2}  \ + \ 
\frac{\tilde d_k \, a_k^* T D T a_k}{( 1 + \tilde d_k \delta )^2} \, 
\frac 1n \sum_{\ell=1}^j \rho^2 \tilde t_{\ell\ell}^2 \tilde d_\ell^2  
- \tilde d_k^2 \frac{(a_k^* T a_k)^2}{(1 + \tilde d_k \delta)^4}  
\right) \psi_j \\ 
+ \frac{\tilde d_k \, a_k^* TDT a_k}{( 1 + \tilde d_k \delta )^2} \, \varphi_j 
+ {\mathcal O}(n^{-1/2}) 
\label{eq:theta_kj} 
\end{multline} 
which implies that 
$\varphi_j =\frac 1n \sum_{k=1}^j \rho^2 \tilde d_k\tilde t_{kk}^2\theta_{kj}$
satisfies  
\[
(1 - F_j) \varphi_j - \left( G_j + F_j M_j \right) \psi_j  
= F_j + {\mathcal O}(n^{-1/2})
\] 
where 
\begin{eqnarray}
F_j &=& \frac 1n \sum_{k=1}^j 
\frac{a_k^* T D T a_k \, \tilde d_k}{( 1 + \tilde d_k \delta )^2}\ ,\nonumber \\
M_j &=& \frac 1n \sum_{\ell=1}^j \rho^2 \tilde t_{\ell\ell}^2 \tilde d_\ell^2\ ,  \label{eq:FMG}  \\ 
G_j &=& \frac 1n \sum_{k=1}^j 
\sum_{\scriptstyle{\ell=1} \atop \scriptstyle{\ell\neq k}}^j 
\frac{ \tilde d_k \tilde d_\ell \left| a_k^* T a_\ell \right|^2}
{(1 + \tilde d_k \delta)^2 (1 + \tilde d_\ell \delta)^2}\ .  
\nonumber 
\end{eqnarray} 
With these new notations, equation \eqref{eq:psi-final} is rewritten 
\[
-\gamma \varphi_j + (1 - F_j - \gamma M_j) \psi_j = \gamma + 
{\mathcal O}(n^{-1/2}) , 
\]
and we end up with a system of two perturbed linear equations in 
$(\varphi_j, \psi_j)$:
\begin{equation}\label{eq:system-2x2}
\left\{
\begin{array}{lcl}
(1 - F_j) \varphi_j - \left( G_j + F_j M_j \right) \psi_j  
&=& F_j + {\mathcal O}(n^{-1/2})\\
-\gamma \varphi_j + (1 - F_j - \gamma M_j) \psi_j &=& \gamma + 
{\mathcal O}(n^{-1/2}) 
\end{array}
\right.\ .
\end{equation}

The determinant of this system is ${\bs\Delta_j} = (1 - F_j)^2 -
\gamma M_j - \gamma G_j$.  The following lemma establishes the link
between the ${\bs\Delta_j}$'s and $\Delta_n$ as defined in Theorem \ref{th:CLT}.
\begin{lemma}
\label{lm-Deltaj}
Recall the definition of $\Delta_n$ :
$$
\Delta_n = 
\left( 1- \frac 1n \tr D^{\frac 12} T A(I + \delta \tilde D)^{-2} 
\tilde D A^* T D^{\frac 12}
\right)^2 - \rho^2 \gamma \tilde \gamma \ .
$$
The determinants ${\bs\Delta}_j$ decrease as $j$ goes from $1$ to $n$; moreover,  
${\bs\Delta}_n$ coincides with $\Delta_n$.
\end{lemma}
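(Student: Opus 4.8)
The plan is to establish the two assertions in turn: the exact identity ${\bs\Delta}_n = \Delta_n$, which is the substantive point, and then the monotonicity of $j \mapsto {\bs\Delta}_j$, which will follow easily once $F_j$, $M_j$, $G_j$ are recognized as partial sums of nonnegative terms.

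For the identity, I would first rewrite $F_n$ as a full trace. Using the definitions \eqref{eq:FMG}, the fact that $a_k^* T D T a_k = [A^* T D T A]_{kk}$, and that $\tilde d_k (1+\tilde d_k\delta)^{-2} = [(I+\delta\tilde D)^{-2}\tilde D]_{kk}$ with the latter matrix diagonal, one gets $nF_n = \tr\bigl(A^* T D T A\, (I+\delta\tilde D)^{-2}\tilde D\bigr)$, and cyclicity of the trace turns this into $nF_n = \tr\, D^{1/2} T A (I+\delta\tilde D)^{-2}\tilde D A^* T D^{1/2}$; hence $1-F_n$ is exactly the quantity squared in $\Delta_n$. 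It then remains to prove $M_n + G_n = \rho^2\tilde\gamma = \frac{\rho^2}{n}\tr\,\tilde D\tilde T\tilde D\tilde T$. I would expand the last trace entrywise as $\sum_{k,\ell}\tilde d_k\tilde d_\ell\,\tilde t_{k\ell}\tilde t_{\ell k}$. The diagonal part ($k=\ell$) equals $\sum_\ell\tilde d_\ell^2\tilde t_{\ell\ell}^2 = n\rho^{-2}M_n$. For the off-diagonal part I would invoke \eqref{eq:Ttilde=f(T)} at $z=-\rho$: since $(I+\delta\tilde D)^{-1}$ is diagonal, the term $\rho^{-1}(I+\delta\tilde D)^{-1}$ contributes nothing off the diagonal, so for $k\ne\ell$ one has $\tilde t_{k\ell} = -\rho^{-1}(1+\delta\tilde d_k)^{-1}(1+\delta\tilde d_\ell)^{-1}a_k^* T a_\ell$. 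Since $T = T^*$ at $z=-\rho$, the matrix $\tilde T$ is Hermitian and $\tilde t_{k\ell}\tilde t_{\ell k} = |\tilde t_{k\ell}|^2 = \rho^{-2}(1+\delta\tilde d_k)^{-2}(1+\delta\tilde d_\ell)^{-2}|a_k^* T a_\ell|^2$; summing over $k\ne\ell$ and comparing with \eqref{eq:FMG} gives exactly $n\rho^{-2}G_n$. Therefore $M_n + G_n = \rho^2\tilde\gamma$, and ${\bs\Delta}_n = (1-F_n)^2 - \gamma M_n - \gamma G_n = (1-F_n)^2 - \rho^2\gamma\tilde\gamma = \Delta_n$.

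For the monotonicity, I would note that $F_j$, $M_j$ and $G_j$ are partial sums over $k$ (and $\ell$) of nonnegative quantities: $T D T = (D^{1/2}T)^*(D^{1/2}T) \ge 0$ gives $a_k^* T D T a_k \ge 0$, while $\tilde d_k$, $\delta$, $\tilde t_{\ell\ell}^2$ and $|a_k^* T a_\ell|^2$ handle the remaining factors; hence these three sequences are nondecreasing in $j$. Lemma \ref{lm:var-bounds} gives $F_n < 1$, so $0 < 1-F_n \le 1-F_j$ for all $j \le n$ and $(1-F_j)^2$ is nonincreasing, while $\gamma > 0$ (Lemma \ref{lm:estimates}) makes $j\mapsto -\gamma M_j$ and $j\mapsto -\gamma G_j$ nonincreasing too. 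Consequently ${\bs\Delta}_j = (1-F_j)^2 - \gamma M_j - \gamma G_j$ is nonincreasing on $\{1,\dots,n\}$; in particular ${\bs\Delta}_j \ge {\bs\Delta}_n = \Delta_n$, which together with $\liminf_n \Delta_n > 0$ (Lemma \ref{lm:var-bounds}) is precisely what is needed to control the solutions of system \eqref{eq:system-2x2}.

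The only genuinely delicate step is the entrywise computation in the second paragraph: one must check carefully that the purely diagonal piece $\rho^{-1}(I+\delta\tilde D)^{-1}$ of \eqref{eq:Ttilde=f(T)} feeds into $M_n$ (via the squares of the diagonal entries of $\tilde T$) and nowhere else, and that Hermitian symmetry turns $\tilde t_{k\ell}\tilde t_{\ell k}$ into $|\tilde t_{k\ell}|^2$ with exactly the denominators $(1+\delta\tilde d_k)^2(1+\delta\tilde d_\ell)^2$ that appear in the double sum defining $G_n$ in \eqref{eq:FMG}. Everything else is routine manipulation of the trace identities already recorded in the paper.
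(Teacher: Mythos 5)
Your proof is correct, and it follows the same overall strategy as the paper: show $F_n$ equals the trace appearing in $\Delta_n$, reduce the identity ${\bs\Delta}_n = \Delta_n$ to $M_n + G_n = \rho^2\tilde\gamma$, and get monotonicity from $F_j, M_j, G_j$ being nondecreasing partial sums of nonnegative terms together with $F_n < 1$ and $\gamma > 0$. The one place where you deviate is in verifying $M_n + G_n = \rho^2\tilde\gamma$: the paper starts from a trace expression for $G_n$, expands a residual diagonal sum using $(1+\delta\tilde d_k)^{-2}a_k^*Ta_k = (1+\delta\tilde d_k)^{-1} - \rho\tilde t_{kk}$, and matches the result against $\rho^2\tilde\gamma$ via \eqref{eq:Ttilde=f(T)}; you instead write $\rho^2\tilde\gamma = \frac{\rho^2}{n}\sum_{k,\ell}\tilde d_k\tilde d_\ell\,\tilde t_{k\ell}\tilde t_{\ell k}$ and split into diagonal ($=M_n$) and off-diagonal pieces, reading off the off-diagonal entries of $\tilde T$ directly from \eqref{eq:Ttilde=f(T)} (the purely diagonal term $\rho^{-1}(I+\delta\tilde D)^{-1}$ contributes nothing for $k\ne\ell$, and Hermitian symmetry of $\tilde T$ at $z=-\rho$ gives $\tilde t_{k\ell}\tilde t_{\ell k} = |\tilde t_{k\ell}|^2$) to land exactly on $G_n$. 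Both computations use the same identity \eqref{eq:Ttilde=f(T)} and are of comparable length; yours is arguably more transparent because it makes visible at a glance why $M_n$ and $G_n$ are precisely the diagonal and off-diagonal contributions to $\tilde\gamma$, whereas the paper's version ends with a ``one can check'' step. No gaps.
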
  

Proof of Lemma \ref{lm-Deltaj} is postponed to Appendix \ref{prf:lm-Deltaj}.

Solving this system of equations and using the lemma in conjunction with 
the fact $\liminf \Delta_n > 0$, established in Lemma \ref{lm:var-bounds}, we obtain:
\[
\begin{bmatrix} \varphi_j \\ \psi_j \end{bmatrix} 
= 
\frac{1}{{\bs \Delta}_j}
\begin{bmatrix} F_j(1-F_j) + \gamma G_j \\ \gamma \end{bmatrix} 
+ {\bs\varepsilon}_j\ , 
\]
where $\|{\bs\varepsilon}_j\| = {\mathcal O}(n^{-1/2})$. Replacing into \eqref{eq:theta_kj}, we obtain 
\begin{align*} 
\frac{2 \rho^2 \tilde d_j \tilde t_{jj}^2 \theta_{jj}}{n} &= 
2 (F_j\! -\! F_{j-1}) + 
(G_j\! -\! G_{j-1} + 2M_j(F_j\! -\! F_{j-1})) \psi_j 
+ 2 (F_j\! -\! F_{j-1}) \varphi_j + {\mathcal O}(n^{-3/2}) \\
&= 2(F_j - F_{j-1}) + 
\frac{\gamma(G_j - G_{j-1}) + 2 \gamma M_j (F_j - F_{j-1})}{{\bs\Delta}_j} 
\\
& 
\ \ \ \ \ \ \ \ \ \ \ \ \ \ \ \ 
\ \ \ \ \ \ \ \ \ \ \ \ \ \ \ \ 
+ \frac{2(F_j - F_{j-1})(F_j(1-F_j)+\gamma G_j)}{{\bs\Delta}_j} + 
{\mathcal O}(n^{-3/2}) 
\end{align*}
which leads to 
\begin{multline*}
\frac 1n \sum_{j=1}^n \left( \rho^2 \tilde t_{jj}^2 \tilde d_j^2 \psi_j 
+ 2 \rho^2 \tilde d_j \tilde t_{jj}^2  \theta_{jj} \right) \\ 
= 
\sum_{j=1}^n 
\frac{2(F_j - F_{j-1}) (1 - F_j) + \gamma(M_j - M_{j-1}) + 
\gamma(G_j - G_{j-1})}{\bs \Delta_j} 
+ {\mathcal O}(n^{-1/2}) . 
\end{multline*} 
On the other hand, $\bs\Delta_{j-1} - \bs\Delta_{j} = 
2(F_j - F_{j-1}) (1 - F_j) + \gamma(M_j - M_{j-1}) + \gamma(G_j - G_{j-1})
+ {\mathcal O}(n^{-2})$, hence, due to Lemma \ref{lm-Deltaj} and 
to $\liminf \Delta_n > 0$, 
\begin{multline*}
\frac 1n \sum_{j=1}^n \left( \rho^2 \tilde t_{jj}^2 \tilde d_j^2 \psi_j 
+ 2 \rho^2 \tilde d_j \tilde t_{jj}^2  \theta_{jj} \right) 
= \sum_{j=1}^n \frac{\bs\Delta_{j-1} - \bs\Delta_{j}}{\bs\Delta_j} + 
{\mathcal O}(n^{-1/2}) \\
= \sum_{j=1}^n \log\left( 1 + 
\frac{\bs\Delta_{j-1} - \bs\Delta_{j}}{\bs\Delta_j}\right)
 + {\mathcal O}(n^{-1/2}) \\
= \sum_{j=1}^n \log \frac{\bs\Delta_{j-1}}{\bs\Delta_j} + 
{\mathcal O}(n^{-1/2}) 
= - \log(\Delta_n) + {\mathcal O}(n^{-1/2}) 
\end{multline*} 
which proves \eqref{eq-cvg-mean-chi3}. Lemma \ref{lemma:chi3} is proven.

\section{Proof of Theorem \ref{th:CLT} (part III)}
\label{sec:proof-clt-3}
In this section, we complete the proof of Theorem \ref{th:CLT}. Proof
of Lemma \ref{lemma:chi4} is very close to the proof of Lemma
\ref{lemma:chi3}; we therefore only provide its main landmarks. We
finally establish the main estimates over $(\Theta_n)$.

\subsection{Elements of proof for Lemma \ref{lemma:chi4}}

Proof of Lemma \ref{lemma:chi4} relies on the following counterpart of Lemma 
\ref{lm-approx-quadra}: 
\begin{lemma}
\label{lm-approx-quadra-non-circular} 
Assume that the setting of Lemma \ref{lm-approx-quadra} holds true;
and let $\mathbb{E} x^2 = \vartheta$. Then for any $p\geq 2$,
\[
\mathbb{E}|\bs{x}^T M \bs{x} - \vartheta \tr M|^p
\le K_p \left( \left( \mathbb{E}|x_1|^4 \tr M M^* \right)^{p/2} 
+ \mathbb{E} |x_1|^{2p} \tr (M M^*)^{p/2}
\right)\ .
\]
\end{lemma}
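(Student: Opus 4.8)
The plan is to reduce everything to the proof of Lemma~\ref{lm-approx-quadra} (Bai--Silverstein), which concerns $\bs{x}^*M\bs{x}$: the only structural difference with $\bs{x}^TM\bs{x}$ is that the second moment of a \emph{pair} is $\mathbb{E}[x_ix_j]=\vartheta\,\delta_{ij}$ rather than $\mathbb{E}[\bar x_ix_j]=\delta_{ij}$, so that only the centering constant and the diagonal contribution change. First, replacing $M$ by $\tfrac12(M+M^T)$ changes neither $\bs{x}^TM\bs{x}$ nor $\tr M$, and increases neither $\tr MM^*$ nor $\tr(MM^*)^{p/2}$, so we may assume $M=M^T$. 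Denoting the entries by $m_{ij}$, decompose
\[
\bs{x}^TM\bs{x}-\vartheta\tr M
= \underbrace{\sum_{i=1}^n (x_i^2-\vartheta)\,m_{ii}}_{D}
\;+\;\underbrace{2\sum_{i<j} x_i m_{ij} x_j}_{S}\ ,
\]
a sum of two centered random variables; by convexity of $t\mapsto|t|^p$ it suffices to bound $\mathbb{E}|D|^p$ and $\mathbb{E}|S|^p$ separately by the right-hand side of the claim.

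For the off-diagonal term, group the summands according to $\max(i,j)$: $S=\sum_{j}x_jY_j$ with $Y_j=2\sum_{i<j}x_im_{ij}$, which is $\sigma(x_1,\dots,x_{j-1})$-measurable, so that $(x_jY_j)_j$ is a martingale difference sequence. From here on the estimate of $\mathbb{E}|S|^p$ involves only the variance $\mathbb{E}|x_1|^2=1$ (together with the higher moments $\mathbb{E}|x_1|^p$ and $\mathbb{E}|x_1|^{2p}$), and is \emph{verbatim} the treatment of the off-diagonal part in the proof of Lemma~\ref{lm-approx-quadra}: Burkholder's inequality reduces it to controlling $\mathbb{E}(\sum_j|Y_j|^2)^{p/2}$ and $\sum_j\mathbb{E}|x_j|^p\,\mathbb{E}|Y_j|^p$, the latter by Rosenthal's inequality applied term by term, the former by recognising $\sum_j|Y_j|^2$ as a quadratic form $\bs{x}^*R\bs{x}$ with $R$ a (triangular-truncation-type) Hermitian nonnegative matrix built from $M$, and then iterating Lemma~\ref{lm-approx-quadra} at exponent $p/2$; one checks $\tr R\le K\tr MM^*$ and $\tr(RR^*)^{p/4}\le K_p\tr(MM^*)^{p/2}$, which together with $\mathbb{E}|x_1|^p\le\mathbb{E}|x_1|^{2p}$ reproduces exactly the two terms of the claimed bound.

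For the diagonal term, $D$ is a sum of independent centered random variables, so Rosenthal's inequality gives
\[
\mathbb{E}|D|^p\le K_p\Bigl[\bigl(\textstyle\sum_i\mathbb{E}|x_i^2-\vartheta|^2\,|m_{ii}|^2\bigr)^{p/2}+\sum_i\mathbb{E}|x_i^2-\vartheta|^p\,|m_{ii}|^p\Bigr].
\]
Now $\mathbb{E}|x_1^2-\vartheta|^2=\mathbb{E}|x_1|^4-|\vartheta|^2\le\mathbb{E}|x_1|^4$ and $\mathbb{E}|x_1^2-\vartheta|^p\le K_p(\mathbb{E}|x_1|^{2p}+|\vartheta|^p)\le K_p\mathbb{E}|x_1|^{2p}$, using $|\vartheta|\le\mathbb{E}|x_1|^2=1\le(\mathbb{E}|x_1|^{2p})^{1/p}$; combined with $\sum_i|m_{ii}|^2\le\tr MM^*$ and $\sum_i|m_{ii}|^p\le\tr(MM^*)^{p/2}$ (the Schatten-class estimate already invoked in the proof of Lemma~\ref{lm-approx-quadra}, valid for $p\ge2$), this yields the required bound on $\mathbb{E}|D|^p$.

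Thus the substance of the argument lies entirely in the off-diagonal part, and that part is identical to one already carried out in the proof of Lemma~\ref{lm-approx-quadra}. The step I expect to be the most delicate is, inside that off-diagonal estimate, keeping the constant independent of $n$ in the recursion on the exponent, i.e.\ bounding the auxiliary matrix $R$ in the Schatten norm $\tr(RR^*)^{p/4}$ (a naive operator-norm bound would lose a $\log n$ factor through the triangular truncation). The genuinely new ingredient here, the non-circular centering, is confined to the diagonal term $D$ and is absorbed by nothing more than the elementary inequality $|\vartheta|\le 1$.
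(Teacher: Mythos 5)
Your proposal is correct, but it takes a genuinely different and much longer route than the paper, which disposes of the lemma in one line via the polarization identity
\[
\bs x^T M \bs x \;=\; \frac14\sum_{k=0}^3 \ii^k\,
\bigl(\ii^k\bar{\bs x}+\bs x\bigr)^*M\bigl(\ii^k\bar{\bs x}+\bs x\bigr),
\]
observing that for each $k$ the vector $\ii^k\bar{\bs x}+\bs x$ has i.i.d.\ centered entries with moments controlled by those of $x_1$, that
$\frac14\sum_k\ii^k\,\E\bigl[(\ii^k\bar{\bs x}+\bs x)^*M(\ii^k\bar{\bs x}+\bs x)\bigr]=\frac14\sum_k\ii^k\bigl(2+2\re(\overline{\ii^k}\vartheta)\bigr)\tr M=\vartheta\tr M$,
and then invoking Lemma~\ref{lm-approx-quadra} four times as a black box (after a harmless rescaling when the variance $2+2\re(\overline{\ii^k}\vartheta)$ is not $1$; when it vanishes the corresponding term is zero a.s.). You instead re-open the Bai--Silverstein proof and adapt its internals, which is sound: your diagonal estimate is complete and correct, and you correctly identify that the non-circular second moment only affects the diagonal centering, absorbed by $|\vartheta|\le 1\le(\E|x_1|^{2p})^{1/p}$, while the off-diagonal martingale sum $\sum_{i\ne j}x_im_{ij}x_j$ is still centered by independence regardless of $\vartheta$, so the Burkholder/Rosenthal/iteration machinery transfers. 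The Schatten bound on the triangular truncation that you flag as the delicate point is real, but it is already present (and handled) in the proof of Lemma~\ref{lm-approx-quadra} itself, so it introduces no new difficulty. Your route is self-contained and makes the mechanics visible; the paper's route buys brevity and never touches the interior of the Bai--Silverstein argument.
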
 
\begin{proof}
The result is obtained upon noticing that 
\begin{gather*}
{\bs x}^T M {\bs x} = 
\frac 14 \sum_{k=0}^3 \mathbf{i}^k 
\left( \mathbf{i}^k \bar{\bs x} + {\bs x} \right)^* M 
\left( \mathbf{i}^k \bar{\bs x} + {\bs x} \right) 
\end{gather*} 
and using Lemma \ref{lm-approx-quadra}. 
\end{proof} 
Here are the main steps of the proof. Introducing the notations 
\begin{eqnarray*}
\underline\psi_j &=& \frac 1n \tr \E\left[ (\E_j Q) D \bar Q D \right]\ ,  \\ 
\underline\zeta_{kj} &= &\E\left[ a_k^* (\E_j Q) D \bar Q \bar a_k \right]\ ,\\  
\underline\theta_{kj} &=& \E\left[ a_k^* (\E_j Q_k) D \bar Q_k \bar a_k \right]\ , 
\\
\underline\varphi_j &=& 
\frac 1n \sum_{k=1}^j \rho^2 \tilde d_k \tilde t_{kk}^2 \underline\theta_{kj} \ , 
\end{eqnarray*} 
and adapting Lemma \ref{lem:control_variances}, we only need to prove that:
\[
\frac 1n \sum_{j=1}^n \left( \rho^2 \tilde t_{jj}^2 \tilde d_j^2 
|\vartheta|^2 \underline\psi_j 
+ 2 \rho^2 \tilde d_j \tilde t_{jj}^2  
\re\left( \vartheta \underline\theta_{jj} \right) \right)
\quad + \quad \log \underline\Delta_n \quad \xrightarrow[n\to\infty]{} 0\ . 
\]
Similar derivations as those performed in Steps 1-3 in Section \ref{sec:proof-clt-2} yield the perturbed system:
$$
\left\{
\begin{array}{rcl}
(1 - \vartheta \underline{F}_j) \underline\varphi_j 
- (\bar\vartheta \underline{G}_j + |\vartheta|^2 \underline{F}_j M_j) 
\underline\psi_j &=& \underline{F}_j + {\mathcal O}(n^{-1/2}) \\
- \vartheta \underline\gamma \underline\varphi_j  
+ ( 1 - \bar\vartheta \bar{\underline{F}}_j -  \underline\gamma |\vartheta|^2 
M_j) \underline\psi_j &=& \underline\gamma + {\mathcal O}(n^{-1/2}) 
\end{array}\right. \ ,
$$
where 
\begin{eqnarray*}
\underline{F}_j &=& \frac 1n \sum_{k=1}^j 
\frac{a_k^* T D \bar T \bar a_k \, \tilde d_k}
{( 1 + \tilde d_k \delta )^2} \in \C,  \qquad  
\underline{G}_j \ =\quad  \frac 1n \sum_{k=1}^j 
\sum_{\scriptstyle{\ell=1} \atop \scriptstyle{\ell\neq k}}^j 
\frac{ \tilde d_k \tilde d_\ell \left( a_k^* T a_\ell \right)^2}
{(1 + \tilde d_k \delta)^2 (1 + \tilde d_\ell \delta)^2} \in \R \ ,\\
M_j &=& \frac 1n \sum_{\ell=1}^j \rho^2 \tilde t_{\ell\ell}^2 \tilde d_\ell^2\ .
\end{eqnarray*}
The determinant of this system is: 
$$
\underline{\bs\Delta}_j = \left| 1 - \vartheta \underline{F}_j \right|^2 
- |\vartheta|^2 \underline\gamma \left(M_j + \underline{G}_j \right) .
$$
By \eqref{ineq:trace}, $0\leq \underline\gamma\leq \gamma$; furthermore,
$|\vartheta| \leq 1$, $| \underline{F}_j | \leq F_j$, 
and $| \underline{G}_j | \leq G_j$. As a result, $\underline{\bs\Delta}_j \geq {\bs\Delta}_j$. 
Hence, by Lemma 
\ref{lm-Deltaj}, the perturbation remains of order ${\mathcal O}(n^{-1/2})$ after solving
the system. Performing the same derivations as in Step 4 in Section \ref{sec:proof-clt-2}, 
it can be established that $\underline{\bs\Delta}_n = \underline{\Delta}_n$. We finally end up with:
\begin{eqnarray*} 
\frac 1n \sum_{j=1}^n \left( \rho^2 \tilde t_{jj}^2 \tilde d_j^2 
|\vartheta|^2 \underline\psi_j 
+ 2 \rho^2 \tilde d_j \tilde t_{jj}^2  
\re\left( \vartheta \underline\theta_{jj} \right) \right)
&=& \sum_{j=1}^n \frac{\underline{\bs\Delta}_{j-1} -  \underline{\bs\Delta}_j}
{\underline{\bs\Delta}_j} + {\mathcal O}(n^{-1/2}) \ ,\\ 
&=& - \log(\underline\Delta_n) + {\mathcal O}(n^{-1/2})\ ,
\end{eqnarray*} 
which is the desired result.

\subsection{Estimates over $\Theta_n$} \label{sec:bound-theta}
In order to conclude the proof of Theorem \ref{th:CLT}, it remains to prove
that $0 < \liminf_n \Theta_n \leq \limsup_n \Theta_n < \infty$. 

Consider first the upper bound. By Lemma \ref{lm:var-bounds}, 
$\sup_n (-\log\Delta_n) < \infty$. 
As $\underline\Delta_n \geq \Delta_n$, $\log\underline\Delta_n$ 
is defined and $\sup_n (-\log\underline\Delta_n) < \infty$. By Lemma 
\ref{lm:estimates}, the cumulant term in the expression of $\Theta_n$ is 
bounded, hence $\limsup_n \Theta_n < \infty$. 

We now prove that $\liminf \Theta_n > 0$. To this end, write: 
\begin{align*} 
\Theta_n &= 
\sum_{j=1}^n \left( 
\frac{{\bs \Delta}_{j-1} - {\bs\Delta}_j}{{\bs\Delta}_j} 
+ \frac{\underline{\bs \Delta}_{j-1} - \underline{\bs\Delta}_j}
{\underline{\bs\Delta}_j} \right) + 
\kappa \, \frac{\rho^2}{n^2} \sum_{i=1}^N d_i^2 t_{ii}^2 
\sum_{j=1}^n \tilde d_j^2 \tilde t_{jj}^2
+ {\mathcal O}(n^{-1/2})\ , \\
&= 
\sum_{j=1}^n \left( 
\frac{\gamma(G_j - G_{j-1})}{{\bs\Delta}_j} + 
\frac{|\vartheta|^2 \underline\gamma(\underline{G}_j - \underline{G}_{j-1})}
{\underline{\bs\Delta}_j} \right)  \\
&\phantom{=} + 
\sum_{j=1}^n \left( 
\frac{2(F_j - F_{j-1})(1-F_j)}{{\bs\Delta}_j} 
+ 
\frac{2\re\left(\vartheta(\underline{F}_j - \underline{F}_{j-1})
(1-\bar\vartheta\bar{\underline{F}}_j) \right)}
{\underline{\bs\Delta}_j} \right)  \\
&\phantom{=} + 
\frac{\rho^2}{n} \sum_{j=1}^n \tilde d_j^2 \tilde t_{jj}^2 \left(
\frac{\gamma}{{\bs\Delta}_j} + 
|\vartheta|^2 \frac{\underline\gamma}{\underline{\bs\Delta}_j} 
+ \frac{\kappa}{n} \sum_{i=1}^N d_i^2 t_{ii}^2
\right) + {\mathcal O}(n^{-1/2}) \ ,\\ 
&\stackrel{\triangle}= Z_{1,n} + Z_{2,n} + Z_{3,n} + {\mathcal O}(n^{-1/2})  \ .
\end{align*} 
We prove in the sequel that $Z_{1,n} \geq 0$, $Z_{2,n} \geq 0$, and
that $\liminf_n Z_{3,n} > 0$.  It has already been noticed that
$\underline{\bs\Delta}_j \geq {\bs\Delta}_j$; moreover, it can be
proven by direct computation that $|\underline{G}_j -
\underline{G}_{j-1} | \leq {G}_j - {G}_{j-1}$, hence $Z_{1,n} \geq 0$.
As
\[
(1-F_j) - \frac{\gamma (M_j + G_j)}{1-F_j} 
\leq 
\left| 1 - \vartheta \underline{F}_j \right| 
- 
\frac{|\vartheta|^2 \underline\gamma \left(M_j + \underline{G}_j \right)}
{\left| 1 - \vartheta \underline{F}_j \right|} , 
\] 
this implies that $\underline{\bs\Delta}_j^{-1} |
1-\vartheta{\underline{F}}_j | \leq {\bs\Delta}_j^{-1}
(1-F_j)$. Noticing in addition that $|\underline{F}_j -
\underline{F}_{j-1} | \leq {F}_j - {F}_{j-1}$, we get $Z_{2,n} \geq
0$. The cumulant $\kappa= \E |X_{11}|^4 -2 - |\vartheta|^2$ satisfies
$\kappa \geq -1 - |\vartheta|^2$, hence
\begin{align*}
Z_{3,n} &\geq  
\frac{\rho^2}{n^2} \sum_{j=1}^n \tilde d_j^2 \tilde t_{jj}^2 
\left( \left( \frac{1}{{\bs\Delta}_j} -1 \right) 
+  |\vartheta|^2 \left( \frac{1}{\underline{\bs\Delta}_j} - 1 \right) 
\right) \sum_{i=1}^N d_i^2 t_{ii}^2 \\
&\phantom{=} 
+ 
\frac{\rho^2}{n} \sum_{j=1}^n \tilde d_j^2 \tilde t_{jj}^2 
\Bigl( 
\frac{1}{n {\bs\Delta}_j} 
\sum_{\scriptstyle{k,\ell=1} \atop \scriptstyle{k\neq\ell}}^N 
d_k^2 \left| t_{kl} \right|^2 
+ 
\frac{1}{n \underline{\bs\Delta}_j} 
\sum_{\scriptstyle{k,\ell=1} \atop \scriptstyle{k\neq\ell}}^N 
d_k^2 \left( t_{kl} \right)^2 \Bigr)\ , \\
&\geq  
\frac{\rho^2}{n^2} \sum_{j=1}^n \tilde d_j^2 \tilde t_{jj}^2 
\left( \left( \frac{1}{{\bs\Delta}_j} -1 \right) 
+  |\vartheta|^2 \left( \frac{1}{\underline{\bs\Delta}_j} - 1 \right) 
\right) \sum_{i=1}^N d_i^2 t_{ii}^2 
= 
\frac{\rho^2}{n^2} \sum_{j=1}^n \tilde d_j^2 \tilde t_{jj}^2 p_j 
\sum_{i=1}^N d_i^2 t_{ii}^2\ . 
\end{align*} 
As the term $p_j$ is linear in $|\vartheta|^2 \in [0,1]$, 
$p_j \geq \min \left( {\bs\Delta}_j^{-1}(1-{\bs\Delta}_j), 
{\bs\Delta}_j^{-1} + \underline{\bs\Delta}_j^{-1} - 2 \right)$.
We have 
\begin{eqnarray*}
{\bs\Delta}_j \underline{\bs\Delta}_j &\leq& 
\left( (1-F_j)^2 - \gamma(M_j+G_j) \right)
\left( (1+F_j)^2 + \gamma(M_j+G_j) \right) \ ,\\
&=& (1-F_j^2)^2 - \gamma^2 (M_j+G_j)^2 - 4 \gamma F_j (M_j+G_j) 
\quad \leq\quad  1\ .
\end{eqnarray*} 
Hence ${\bs\Delta}_j^{-1} + \underline{\bs\Delta}_j^{-1} - 2 
\geq {\bs\Delta}_j^{-1} + {\bs\Delta}_j - 2 = 
{\bs\Delta}_j^{-1} ( 1 - {\bs\Delta}_j)^2$. As $1 - {\bs\Delta}_j \geq 
\gamma M_j$, we get $p_j \geq \gamma^2 M_j^2$, which implies that
\begin{multline*}
Z_{3,n} \geq 
\frac{\rho^2\gamma^2}{n^2} \sum_{j=1}^n \tilde d_j^2 \tilde t_{jj}^2 M_j^2 
\sum_{i=1}^N d_i^2 t_{ii}^2 
= 
\gamma^2 \Bigl( \sum_{j=1}^n M_j^2 ( M_j - M_{j-1} ) \Bigr) 
\frac 1n \sum_{i=1}^N d_i^2 t_{ii}^2  \\ 
= 
\frac{\gamma^2}{3} \Bigl( \sum_{j=1}^n M_j^3 - M_{j-1}^3 \Bigr) 
\frac 1n \sum_{i=1}^N d_i^2 t_{ii}^2 + {\mathcal O}(n^{-1}) 
= 
\frac{\gamma^2 M_n^3}{3} 
\frac 1n \sum_{i=1}^N d_i^2 t_{ii}^2 
+ {\mathcal O}(n^{-1}) \ ,
\end{multline*}
whose liminf is positive by Lemma \ref{lm:estimates}. 

The estimates over the variance are therefore established. This completes the proof of Theorem \ref{th:CLT}.

\section{Proposition \ref{prop:bias} (bias): Main steps of the proof} 
\label{sec:proof-bias} 

Proof of Proposition \ref{prop:bias}-({\rm i}) can be found in 
\cite[Theorem 2]{Dumont10}. Let us prove ({\rm ii}). The same arguments as in 
the companion article \cite{HLN08} allow to write the bias term as:
$$
N \left( \E \mathcal{I}_n(\rho) - V_n(\rho)\right) = 
\int^\infty_\rho \tr \left( T(-\omega) - \E Q(-\omega) \right) 
{\bf d}\omega , 
$$
Recall that in the centered case where $A = 0$, $T(-\omega)$ and 
$\tilde T(-\omega)$ take the simple form 
$T(-\omega) = [ \omega(I_N +\tilde \delta(-\omega) D) ]^{-1}$ and 
$\tilde T(-\omega) = [ \omega(I_N + \delta(-\omega) \tilde D) ]^{-1}$, 
which implies that $\underline{\gamma} = \gamma$ and 
$\underline{\tilde \gamma} = \tilde \gamma$. 
We introduce the following intermediate quantities: 
\begin{align*} 
\alpha(-\omega) &= \frac 1n \tr D\E Q(-\omega) , & 
\tilde \alpha(-\omega)  &= \frac 1n \tr \tilde D\E \tilde Q(-\omega) , \\ 
C(-\omega) &= \Bigl( \omega(I_N + \tilde \alpha(-\omega)D) \Bigr)^{-1} , & 
\tilde C(-\omega) &= \Bigl( \omega(I_n + \alpha(-\omega)\tilde D) \Bigr)^{-1}.
\end{align*} 
From Theorem \ref{theo:quadra}, $n^{-1} \tr U(C-T) \to 0$ and 
$n^{-1} \tr \tilde U(\tilde C-\tilde T) \to 0$ for any sequences of
deterministic matrices $U$ and $\tilde U$ with bounded spectral norms. 

The proof consists of two steps: 
\subsection{Step 1} 
Given $\omega > 0$, let 
\[
\beta_n(\omega) = 
\left( \kappa 
+ \frac{|\vartheta|^2}{1 - \omega^2 |\vartheta|^2 \gamma \tilde\gamma} 
\right) R_n(\omega)  
\] 
where
\[
  R_n(\omega) = 
\frac{\frac{\omega^3}{n}\tr D T^2}{1 - \omega^2 \gamma \tilde\gamma}
\left( \frac{\gamma}{n} \tr (\tilde D\tilde T)^3 
- \frac{\omega\tilde\gamma^2}{n} \tr (DT)^3\right) 
- \frac{\omega^2 \tilde\gamma}{n} \tr D^2 T^3 . 
\] 
The purpose of this step is to show that $\int_\rho^\infty 
| \beta_n(\omega) | \, {\bf d}(\omega) < \infty$ and that 
\begin{equation} 
\label{step1-bias} 
N(\E {\mathcal I}_n(\rho) - V_n(\rho) ) - \int_\rho^\infty \beta_n(\omega) 
\, {\bf d}(\omega) 
\xrightarrow[n\to\infty]{} 0\ . 
\end{equation} 
By inspecting the expression of $\beta_n(\omega)$, by using 
Lemmas \ref{lm:estimates} and \ref{lm:var-bounds} and by recalling that
$1 - \omega^2 \gamma \tilde\gamma = \Delta_n$ taken at $z=-\omega$, we obtain
after a small derivation 
that $| \beta_n(\omega) | \leq K / \omega^3$ on $[\rho, \infty)$ where $K$ 
does not depend on $n$ nor on $\omega$. This proves the integrability
of $|\beta_n(\omega)|$. 
By taking up the poof of \cite[Inequality (7.10)]{HLN08} with minor 
modifications, we also show that 
\[
\left| \tr \left( T(-\omega) - \E Q(-\omega) \right) \right| 
\leq \frac{K}{\omega^2} 
\]
hence, showing 
\begin{equation}
\label{ident:bias}
\tr \left( T(-\omega) - \E Q(-\omega) \right) - \beta(\omega) 
\xrightarrow[n\rightarrow\infty]{} 0
\end{equation}
and applying the Dominated Convergence Theorem leads to \eqref{step1-bias}. 
In order to show \eqref{ident:bias}, we start by writing for $z=-\omega$
$$
\tr \left( T - \E Q \right) = \tr \left( T - C \right) 
                                + \tr \left( C - \E Q \right).
$$
Using the decomposition 
$\tr \left( T - C \right) = \tr C( C^{-1}- T^{-1})T = 
\omega \left(\tilde \alpha - \tilde \delta \right) \tr D C T$, we obtain 
\begin{equation}
\label{eq:bias1}
\tr \left( T - \E Q \right) = 
\omega n \left(\tilde \alpha - \tilde \delta \right) 
\frac 1n \tr DCT + \tr (C - \E Q)\ .
\end{equation}
On the other hand, writing $n(\alpha - \delta) = \tr D(\E Q - C) + 
\tr D(C-T) = \tr D(\E Q - C) - \omega (\tilde\alpha - \tilde\delta) 
\tr D^2 CT$ and similarly for $n(\tilde\alpha - \tilde\delta)$, we  
obtain the system 
\begin{equation}
\label{sys-alpha} 
\begin{bmatrix}
1 & \omega n^{-1} \tr D^2 CT \\
\omega n^{-1} \tr \tilde D^2 \tilde C \tilde T & 1 
\end{bmatrix} 
\begin{bmatrix} 
n(\alpha - \delta) \\ n(\tilde\alpha - \tilde\delta) 
\end{bmatrix} 
= 
\begin{bmatrix} 
\tr D(\E Q - C) \\ \tr \tilde D(\E\tilde Q - \tilde C)  
\end{bmatrix} . 
\end{equation} 
Consequently, in order to show \eqref{ident:bias}, we need to look for 
approximations of $\tr U (\E Q - C)$ and 
$\tr \tilde U (\E \tilde Q - \tilde C)$ for 
deterministic matrices $U$ and $\tilde U$ with bounded spectral norms: 

\begin{lemma}
\label{lm:tr(EQ-C)}
Assume that the setting of Proposition \ref{prop:bias} holds true. 
Fix $z=-\omega < 0$ and let $(U_n)_n$ (resp. $(\tilde U_n)_n$) be a 
sequence of $N \times N$ (resp. $n\times n$) diagonal deterministic matrices 
such that $\sup_n \max( \| U_n\|, \| \tilde U_n \|)  < \infty$. Then,
\begin{gather} 
\tr U_n(C - \E Q) 
+ \kappa \frac{\omega^2 \tilde \gamma}{n}  \tr U D^2 T^3   
+  |\vartheta|^2 \frac{\omega^2 \tilde\gamma}
            {1 - \omega^2 |\vartheta|^2 \gamma \tilde\gamma} 
\frac 1n \tr U D^2 T^3 
\xrightarrow[n\to\infty]{}  0, \label{E: trU(C - EQ)} \\
\tr \tilde U_n(\tilde C - \E \tilde Q) 
+ \kappa \frac{\omega^2 \gamma}{n}  \tr \tilde U \tilde D^2 \tilde T^3   
+  |\vartheta|^2 \frac{\omega^2 \gamma}
            {1 - \omega^2 |\vartheta|^2 \gamma \tilde\gamma} 
\frac 1n \tr \tilde U \tilde D^2 \tilde T^3 
\xrightarrow[n\to\infty]{}  0 \label{E_tilde: tr U(C - EQ)}. 
\end{gather} 
\end{lemma}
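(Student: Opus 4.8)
The plan is to convert \eqref{E: trU(C - EQ)} into a closed linear equation for the ``second-order'' quantities $\tr U(C - \E Q)$ and $\tr \tilde U(\tilde C - \E \tilde Q)$, to extract from the fourth-order moments of $X_{11}$ the contributions carrying $\kappa$ and $\vartheta$, and to solve the resulting system. I will only write the argument for \eqref{E: trU(C - EQ)}, the relation \eqref{E_tilde: tr U(C - EQ)} being obtained by exchanging $(D,Q,T,\gamma) \leftrightarrow (\tilde D, \tilde Q, \tilde T, \tilde\gamma)$. Since $z = -\omega$ and $A = 0$, one has $C^{-1} = \omega(I + \tilde\alpha D)$, $Q^{-1} = \Sigma\Sigma^* + \omega I$ and $\Sigma\Sigma^* = \sum_{j=1}^n y_j y_j^*$, so the resolvent identity gives $C - Q = C(\Sigma\Sigma^* - \omega\tilde\alpha D)Q$ and therefore
\[
\tr U(C - \E Q) = \tr\Bigl( UC\, \E\bigl[ (\Sigma\Sigma^* - \omega\tilde\alpha D)Q \bigr] \Bigr) .
\]
By \eqref{eq:woodbury2}, $y_j^* Q = \omega\tilde q_{jj}\, y_j^* Q_j$, hence $\E[\Sigma\Sigma^* Q] = \omega \sum_j \E[\tilde q_{jj}\, y_j y_j^* Q_j]$. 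Writing $\tilde q_{jj} = \tilde b_j - \omega\tilde q_{jj}\tilde b_j e_j$ via \eqref{eq:diff-q-b} and using that $\tilde b_j$ and $Q_j$ are independent of $y_j = n^{-1/2}\tilde d_j^{1/2} D^{1/2} x_j$ (so that $\E[\tilde b_j y_j y_j^* Q_j] = (\tilde d_j/n)\,\E[\tilde b_j D Q_j]$),
\[
\E[\Sigma\Sigma^* Q] = \frac{\omega}{n} \sum_{j=1}^n \tilde d_j\, \E[\tilde b_j D Q_j]
- \omega^2 \sum_{j=1}^n \E\bigl[ \tilde q_{jj}\tilde b_j\, e_j\, y_j y_j^* Q_j \bigr] .
\]

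Next I would extract the genuine source terms. In the first sum, matched against $\omega\tilde\alpha D\,\E Q$ (recall $\tilde\alpha = n^{-1}\sum_j \tilde d_j\, \E\tilde q_{jj}$), the decoupling covariance $\mathrm{Cov}(\tilde b_j, DQ_j)$ and the difference $\E Q_j - \E Q$ are controlled via Lemma \ref{lem:rank1-perturbation}, Theorem \ref{theo:quadra} and \eqref{eq:BS-lemma-2}, using that $n^{-1}\tr DQ$ concentrates at rate $n^{-1}$; what remains there involves the second-order term $\E\tilde q_{jj} - \E\tilde b_j$, which by \eqref{eq:diff-q-b} equals $z\,\E[\tilde q_{jj}\tilde b_j e_j]$ and which, after replacing $\tilde q_{jj}\tilde b_j$ by $\tilde t_{jj}^2$ and evaluating $\E[e_j^2]$ through the covariance formula behind \eqref{eq:identite-fondamentale} (here with $\bs{u} = 0$, since $A = 0$; this is where Lemma \ref{lm-approx-quadra-non-circular} enters), is of order $n^{-1}$ and carries a $(1 + |\vartheta|^2)\gamma + \kappa\, n^{-1}\sum_i d_i^2 t_{ii}^2$ structure. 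In the second sum I replace $\tilde q_{jj}\tilde b_j$ by $\tilde t_{jj}^2$ (errors handled by \eqref{eq-(q-b)}, \eqref{eq:estimate-t-tilde}) and, since $Q_j$ is independent of $y_j$, compute the conditional expectation given all columns but the $j$-th: expanding the fourth moments of $x_j$ yields
\[
\E\bigl[\, e_j\, y_j y_j^* \,\bigm|\, (y_\ell)_{\ell\neq j} \,\bigr]
= \frac{\tilde d_j^2}{n^2}\Bigl( D Q_j D + |\vartheta|^2\, D\bar Q_j D + \kappa\, D^2 \diag(Q_j) \Bigr) ,
\]
the transpose term being the origin of the $|\vartheta|^2$ contribution and the last one of the $\kappa$ contribution. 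Substituting $Q_j \to T$ (and $\bar Q_j \to \bar T = T$) through Theorem \ref{theo:quadra}, using that $T,\tilde T$ are diagonal and $\tilde\gamma = n^{-1}\sum_j \tilde d_j^2\tilde t_{jj}^2$, the pieces built on $DQ_jD$ reassemble into a \emph{linear functional} of $\tr(\cdot\,(C-\E Q))$ and $\tr(\cdot\,(\tilde C-\E\tilde Q))$, while the transpose and cumulant pieces produce, modulo ${\mathcal O}(n^{-1/2})$, source terms proportional to $n^{-1}\tr U D^2 T^3$ with prefactors $-|\vartheta|^2\omega^2\tilde\gamma$ and $-\kappa\,\omega^2\tilde\gamma$.

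Collecting everything, together with the co-resolvent analogue, yields for all bounded diagonal $U,\tilde U$ an identity of the form
\[
\tr U(C - \E Q) = {\mathcal L}_U\bigl( \tr(\cdot\,(C-\E Q)),\ \tr(\cdot\,(\tilde C-\E\tilde Q)) \bigr)
- \bigl(\kappa + |\vartheta|^2\bigr) \frac{\omega^2\tilde\gamma}{n}\tr U D^2 T^3 + {\mathcal O}(n^{-1/2}) ,
\]
and the symmetric one. Because $C$ is defined through the exact quantity $\tilde\alpha = n^{-1}\tr \tilde D\,\E\tilde Q$, the ``ordinary'' first-order loop is already absorbed, and --- using again that $T,\tilde T$ are diagonal --- ${\mathcal L}$ probes $C - \E Q$ only through its contraction against a multiple of $D^2 T^2$; substituting $U \propto D^2 T^2$, $\tilde U\propto \tilde D^2\tilde T^2$ therefore reduces the problem to a $2\times2$ linear system whose determinant is $1 - \omega^2 |\vartheta|^2 \gamma\tilde\gamma \ge \Delta_n$, hence bounded away from $0$ on $[\rho,\infty)$ by Lemma \ref{lm:var-bounds}. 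Solving it turns the $|\vartheta|^2$ source into $|\vartheta|^2 \omega^2\tilde\gamma\,(1 - \omega^2|\vartheta|^2\gamma\tilde\gamma)^{-1}\, n^{-1}\tr U D^2 T^3$, while the $\kappa$ source, whose diagonal structure does not couple to this loop, survives bare as $\kappa\,\omega^2\tilde\gamma\, n^{-1}\tr U D^2 T^3$ --- which is \eqref{E: trU(C - EQ)} --- the ${\mathcal O}(n^{-1/2})$ errors being uniform in $\omega\ge\rho$, as needed for \eqref{ident:bias}. The main obstacle is exactly this bookkeeping: deciding which of the many second-order contributions (from $\E\tilde q_{jj}-\E\tilde b_j$, from $\E Q_j - \E Q$, from the fourth-moment matrix above) feed back into ${\mathcal L}$ versus which are genuine sources, and tracking --- uniformly in $\omega$ --- the numerous ${\mathcal O}(n^{-1/2})$ remainders produced by the successive substitutions $Q_j \to Q \to \E Q \to T$ and $\tilde q_{jj}\to\tilde t_{jj}$.
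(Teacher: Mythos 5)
Your opening is the same as the paper's: the resolvent identity $\tr U(C-\E Q)=\tr(UC\,\E[(\Sigma\Sigma^*-\omega\tilde\alpha D)Q])$, the expansion of $\tilde q_{jj}$ via \eqref{eq:diff-q-b}, and the fourth-moment conditional expectation
$\E[e_j y_jy_j^*\mid (y_\ell)_{\ell\neq j}]=\frac{\tilde d_j^2}{n^2}(DQ_jD+|\vartheta|^2 D\bar Q_jD+\kappa D^2\diag(Q_j))$
are all correct and match Step~1 of the paper's proof (where the paper invokes \eqref{eq:identite-fondamentale} with $\bs{u}=0$). Likewise you correctly observe that the ``ordinary'' loop cancels because $C$ is built from the exact $\tilde\alpha=n^{-1}\tr\tilde D\,\E\tilde Q$; in the paper this is the cancellation of $Z_2+Z_3$ against the $\tr Q_jDQ_jUCD$ part of $Z_1$, after which only the $|\vartheta|^2$ and $\kappa$ contributions remain.

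The gap is in how you propose to resolve the $|\vartheta|^2$ contribution. After the cancellation, the exact structure is
$\tr U(\E Q-C)=|\vartheta|^2\omega^2\tilde\gamma\,\frac 1n\E\tr(QDCU\bar QD)+\kappa\,\omega^2\tilde\gamma\,\frac 1n\tr UD^2T^3+o(1)$,
so the nontrivial object is the \emph{two-resolvent} quantity $\frac 1n\E\tr(QDCU\bar QD)$. The paper closes the argument by running the same machinery (resolvent identity for $Q$, \eqref{eq:woodbury2}, the fourth-moment identity for $\E[\eta_j^T\bar Q_j\,(\cdot)\,\eta_j]$) a \emph{second} time on this object, yielding a scalar self-consistent equation $(1-\omega^2|\vartheta|^2\gamma\tilde\gamma)\,\frac 1n\E\tr QDCU\bar QD=\frac 1n\tr UD^2T^3+o(1)$. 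You instead claim that ``${\mathcal L}$ probes $C-\E Q$ only through $\tr\bigl(D^2T^2(C-\E Q)\bigr)$'' and that the problem reduces to a $2\times 2$ linear system in the one-resolvent traces $\tr\bigl(D^2T^2(C-\E Q)\bigr)$, $\tr\bigl(\tilde D^2\tilde T^2(\tilde C-\E\tilde Q)\bigr)$. That step would not close: $\frac 1n\E\tr(QDCU\bar QD)$ carries $O(1)$ covariance between $Q$ and $\bar Q$ --- this is precisely where the factor $(1-\omega^2|\vartheta|^2\gamma\tilde\gamma)^{-1}$ originates --- and it cannot be rewritten, to the needed order, as a linear functional of $C-\E Q$ and $\tilde C-\E\tilde Q$. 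Indeed, the naive decorrelation $\frac 1n\E\tr(QDCU\bar QD)\approx\frac 1n\tr(\E Q\,DCU\,\E\bar Q\,D)\approx\frac 1n\tr UD^2T^3$ already produces the wrong constant (it misses the $(1-\omega^2|\vartheta|^2\gamma\tilde\gamma)^{-1}$ dressing), and the correction $\frac 1n\tr\bigl(D^2T^2U(C-\E Q)\bigr)$ you would be contracting against is only $O(n^{-1})$, hence negligible. So the final ``solve a $2\times2$ system'' step is a misidentification of where the recursion lives; you need, as the paper does, a separate fluctuation analysis of the mixed second-order moment $\E\tr(QDCU\bar QD)$ itself.
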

Recalling that $n^{-1} \tr U(C - T) \to 0$ and taking $U = D^2 T$, we have
$n^{-1} \tr D^2 CT - \gamma \to 0$. Similarly,  
$n^{-1} \tr \tilde D^2 \tilde C\tilde T - \tilde \gamma \to 0$. 
Solving system \eqref{sys-alpha} and using this lemma with $U = D$ and 
$\tilde U = \tilde D$, we obtain
\begin{align*} 
n(\tilde\alpha - \tilde\delta) &= 
\frac{1}{1-\omega^2 \gamma\tilde\gamma} 
 \left( 
 - \omega \tilde\gamma \tr D(\E Q - C) 
 + \tr \tilde D(\E\tilde Q - \tilde C) \right) + \varepsilon  \\
&= 
\frac{\kappa}{1 - \omega^2 \gamma \tilde \gamma}
\left( \frac{\omega^2 \gamma}{n} \tr (\tilde D\tilde T)^3 -
\frac{\omega^3 \tilde\gamma^2}{n} \tr (DT)^3 \right) \\
&\phantom{=}  + \frac{|\vartheta|^2}
{(1 - \omega^2 \gamma \tilde \gamma) 
(1 - \omega^2 |\vartheta|^2 \gamma \tilde \gamma)} 
\left( \frac{\omega^2 \gamma}{n} \tr (\tilde D \tilde T)^3 - \frac{\omega^3 \tilde\gamma^2}{n} \tr (DT)^3  
 \right) + \varepsilon' 
\end{align*}
where $\varepsilon, \varepsilon' \to 0$. Using Lemma \ref{lm:tr(EQ-C)} 
again in \eqref{eq:bias1} with $U = I$, we obtain \eqref{ident:bias}. 

The remainder of this paragraph is devoted to the proof of Lemma 
\ref{lm:tr(EQ-C)}. 

Recall the following notations:
$$
\tilde b_j(-\omega) = 
\frac 1{\omega\left(1+\frac{\tilde d_j}{n} \tr D Q_j(-\omega)\right)}
\quad \text{and} \quad 
e_j(-\omega) = 
\eta_j^* Q_j(-\omega) \eta_j - \frac{\tilde d_j}{n} \tr D Q_j(-\omega)  \ .
$$
Starting with 
\[
\tr U\left(\E Q - C \right) = 
\tr \E[ UC(C^{-1} - Q^{-1})Q ] 
= - \tr\E[ UC\Sigma\Sigma^* Q ] + \omega\tilde\alpha \tr \E [UCDQ] 
\] 
and using \eqref{eq:woodbury2} and \eqref{eq:diff-q-b}, we obtain 
$\tr U\left(\E Q - C \right) = Z_1 + Z_2 + Z_3$ where 
\begin{align*}
Z_1 &= \sum_{j=1}^n \E\left[ \omega^2 \tilde b_j^2 \, e_j \, 
\eta^*_j Q_j U C \eta_j \right] , \\
Z_2 &= - \sum_{j=1}^n \E\left[ \omega^3 \tilde q_{jj} \tilde b_j^2 \, e_j^2 \, 
\eta^*_j Q_j U C \eta_j \right] , \\
Z_3 &= 
\omega\, \tilde \alpha\, \E \tr U C D Q 
- \frac{\omega}{n} \sum^n_{j=1} \E\, \tilde b_j \tilde d_j  \tr D Q_j U C \ . 
\end{align*} 
In the remainder, we omit the study of the negligible terms to focus on the 
deterministic equivalent formulas; in this spirit, we shall denote by 
$\varepsilon$ a negligible term whose value might change from line to line. 

The term $Z_1$ is  
\[
Z_1 = \sum_{j=1}^n \E\left[ \omega^2 \tilde b_j^2 \, e_j \, 
\left( \eta^*_j Q_j U C \eta_j - \tilde d_j \frac{\tr D Q_j U C}{n}
\right) \right] . 
\]
Using Identity \eqref{eq:identite-fondamentale} with $M = Q_j$, $P= Q_j U C$ 
and ${\bs u} = 0$, we obtain:
\begin{multline*}
Z_1  = 
\frac 1n \sum^{n}_{j=1} 
\E \left[\omega^2 \tilde b_j^2  
\left(\frac{\tilde d_j^2}{n} \tr Q_j D Q_j U C D + 
\frac{|\vartheta|^2}{n} \tilde d_j^2 \tr Q_j D C U \bar Q_j D
\right.\right. \\
\left.\left.   
+ \frac{\kappa}{n} 
\sum^{N}_{i=1} \tilde d^2_j d_i^2 [Q_j]_{ii}[Q_j U C]_{ii}\right)\right] + \varepsilon .  
\end{multline*}
It is not difficult to check that 
\[
Z_2 = - \sum_{j=1}^n \E\left[ \omega^3 \tilde b_j^3 \, 
\tilde d_j \frac{\tr D Q_j U C}{n} \, e_j^2 \, 
\right] + \varepsilon . 
\]
Turning to $Z_3$, we have 
\begin{eqnarray*}
Z_3 &=& \frac{\omega}{n} \sum ^n_{j=1} 
\E\left[ 
\tilde d_j \E \tilde q_{jj} \tr U C D Q - \tilde d_j \tilde b_j \tr U C D Q_j 
\right] \\
&=& \frac{\omega}{n} \sum ^n_{j=1} \tilde d_j 
\E\left[ \E \tilde q_{jj} \left( \tr U C D Q_j - \omega \tilde q_{jj}\tr U C D Q_j \eta_j\eta^*_j Q_j \right) - \tilde b_j \tr U C D Q_j\right]\\
&=& 
\frac \omega n \sum^{n}_{j=1} \tilde d_j 
\E\left[(\E \tilde q_{jj} - \tilde b_j) \tr U C D Q_j\right] 
- \frac{\omega^2}{n} \sum^{n}_{j=1} \tilde d_j 
\E \tilde q_{jj} \E\left[\tilde b_j  \eta^*_j Q_j U C D Q_j \eta_j \right] 
+ \varepsilon . 
\end{eqnarray*}
Replacing $\tilde b_j$ by $\tilde q_{jj} + \omega \tilde b_j^2 e_j - \omega^2
\tilde q_{jj} \tilde b_j^2 e_j^2$, we have
\[
\E\left[(\E \tilde q_{jj} - \tilde b_j) \tr U C D Q_j\right] = 
\E\left[(\E \tilde q_{jj} - \tilde q_{jj}) \tr U C D Q_j\right] 
+ 
\E\left[ \omega^2 \tilde q_{jj} \tilde b^2_j e^2_j \, \tr U C D Q_j\right] . 
\] 
Since 
\[
\tilde q_{jj} - \E \tilde q_{jj} 
= 
\frac{\E(\eta_j^* Q_j \eta_j) - \eta_j^* Q_j \eta_j}
{\omega(1+\eta_j^* Q_j \eta_j)(1+\E(\eta_j^* Q_j \eta_j))} 
+ 
\E\left[ 
\frac{\eta_j^* Q_j \eta_j - \E(\eta_j^* Q_j \eta_j)}
{\omega(1+\eta_j^* Q_j \eta_j)(1+\E(\eta_j^* Q_j \eta_j))} 
\right]  
\]
we have $\E(\tilde q_{jj} - \E \tilde q_{jj})^2 = {\mathcal O}(1/n)$. Hence
\[
\E\left[(\E \tilde q_{jj} - \tilde q_{jj}) \tr U C D Q_j\right] 
= 
\E\left[(\E \tilde q_{jj} - \tilde q_{jj}) ( \tr U C D Q_j - 
\E \tr U C D Q_j ) \right] = {\mathcal O}(n^{-1/2}) 
\]
by Theorem \ref{theo:quadra}-\eqref{var(trUQ)}. It follows that
$\E\left[(\E \tilde q_{jj} - \tilde b_j) \tr U C D Q_j\right] = 
\E\left[ \omega^2 \tilde b^3_j e^2_j \, \tr U C D Q_j\right] + \varepsilon$,
hence 
\[
Z_2 + Z_3 = 
- \frac{1}{n} \sum^{n}_{j=1} 
\E\left[\omega^2 \tilde b^2_j \tilde d_j^2\frac 1n \tr Q_j D Q_j U C D 
\right] + \varepsilon  . 
\] 
Taking the sum $Z_1+Z_2+Z_3$, the terms that do not depend on $\vartheta$
nor on $\kappa$ cancel out, and we are left with 
\[
\tr U(\E Q - C) = 
|\vartheta|^2 \omega^2 \tilde\gamma  
\frac 1n \E \left[\tr Q D C U \bar Q D \right] 
+ \kappa 
\omega^2 \tilde\gamma 
\frac 1n \tr U D^2 T^3 
+ \varepsilon . 
\]
where we relied on the usual approximations for the diagonal entries of the 
resolvent (see Lemma \ref{lemma:chi1}) to obtain the term in $\kappa$. 
We now briefly characterize the asymptotic behavior of 
$n^{-1} \tr \E \, Q D C U \bar Q D$. Starting with $Q = T + \omega \tilde\delta
T D Q - T \Sigma\Sigma^* Q$, we have 
\[
\frac 1n \tr \E Q D C U \bar Q D = 
\frac 1n \tr \E TDCU \bar Q D 
+ \frac{\omega\tilde\delta}{n} \tr \E TDQDCU \bar Q D 
- \frac 1n \tr \E T \Sigma\Sigma^* QDCU \bar Q D , 
\]
and 
\begin{align*}
- \frac 1n \tr \E T \Sigma\Sigma^* QDCU \bar Q D 
&= - \frac 1n \sum_{j=1}^n 
  \E \omega \tilde q_{jj} \eta_j^* Q_j DCU \bar Q D T \eta_j \\
&= 
- \frac 1n \sum_{j=1}^n 
  \E \omega \tilde q_{jj} \eta_j^* Q_j DCU \bar Q_j D T \eta_j \\
& 
\ \ \ \ \ \ \ \ \ \ \ \ \ \ \ \ \ \ \ \ \ \ \ \ \ \ \ \ \ \ \ \ 
+ \frac 1n \sum_{j=1}^n 
  \E \omega^2 \tilde q_{jj}^2 \eta_j^* Q_j DCU \bar Q_j \bar \eta_j  
\, \eta_j^T D T \eta_j \\
&= 
- \frac{\omega\tilde\delta}{n} \tr \E TDQDCU \bar Q D 
+ |\vartheta|^2 \omega^2 \gamma \tilde\gamma 
\frac{\tr \E QDCU\bar Q D}{n} + \varepsilon . 
\end{align*} 
We therefore get 
$$
\frac 1n \tr \, \E \, Q D C U \bar Q D 
= \frac{\frac 1n \tr \, U D^2 T^3}
{1 - \omega^2 |\vartheta|^2 \gamma \tilde \gamma} + \varepsilon 
$$
and Convergence \eqref{E: trU(C - EQ)} of lemma \ref{lm:tr(EQ-C)} is shown. 
Convergence \eqref{E_tilde: tr U(C - EQ)} is proven similarly. 
Lemma \ref{lm:tr(EQ-C)} is proven, and Step 1 of the proof of Proposition 
\ref{prop:bias}-(ii) is established. 

\subsection{Step 2} 
The purpose of this step is to show that 
\[
R(\omega) = \frac 12 \frac{\bf d}{{\bf d}\omega} 
\left( \omega^2 \gamma(-\omega) \tilde\gamma(-\omega) \right) . 
\] 
Plugging into the expression of $\beta_n(\omega)$, it is straightforward to 
show that $\int_\rho^\infty \beta_n(\omega) \, {\bf d}(\omega)$ 
coincides with ${\mathcal B}_n$ given by \eqref{def:bias}. 

Recall that 
\[
R(\omega) = 
\frac{\omega^3 \gamma}{1 - \omega^2 \gamma \tilde\gamma}
\frac{\tr D T^2}{n} 
\frac{\tr (\tilde D\tilde T)^3}{n}  
- \frac{\omega^4\tilde\gamma^2}{1 - \omega^2 \gamma \tilde\gamma}
\frac{\tr D T^2}{n} \frac{\tr (DT)^3}{n}  
- \omega^2 \tilde\gamma \frac{\tr D^2 T^3}{n} 
= R_1 + R_2 + R_3 
\] 
Our method is similar to \cite[\S V.B]{HLN08Ieee}. 
We start by showing that the derivatives of $\tilde\gamma(-\omega)$ and
$\gamma(-\omega)$ that we denote respectively as $\tilde\gamma'$ and $\gamma'$
are 
\begin{equation}
\label{eq:tgamma'} 
\begin{split} 
\tilde\gamma' &= \frac{{\bf d}\tilde\gamma(-\omega)}{{\bf d}\omega} = 
- \frac{2}{\omega} \tilde\gamma + 
\frac{2 \omega}{1 - \omega^2 \gamma \tilde\gamma}
\frac{\tr (\tilde D \tilde T)^3}{n} 
\frac{\tr D T^2}{n}  \\
\gamma' &= \frac{{\bf d}\gamma(-\omega)}{{\bf d}\omega} = 
- \frac{2}{\omega} \gamma + 
\frac{2 \omega}{1 - \omega^2 \gamma \tilde\gamma}
\frac{\tr (D T)^3}{n} 
\frac{\tr \tilde D \tilde T^2}{n} . 
\end{split}
\end{equation}
We have
\begin{equation}
\label{eq:g'} 
\tilde\gamma' = 
\frac 1n \sum_{j=1}^n \tilde d_j^2 
\frac{{\bf d}}{{\bf d}\omega} 
\left( \frac{1}{\omega^2 (1 + \tilde d_j \delta(-\omega))^2} \right)
= 
- \frac{2}{\omega} \tilde\gamma - 2 \omega \delta' 
\frac{\tr (\tilde D \tilde T)^3}{n} 
\end{equation} 
where we put $\delta' = {\bf d}\delta(-\omega) / {\bf d}\omega$. This 
derivative can be expressed as  
\[
\delta' = 
\frac 1n \sum_{i=1}^N d_i \frac{{\bf d}}{{\bf d}\omega} 
\left( \frac{1}{\omega (1 + d_i \tilde\delta(-\omega))} \right)
= 
- \frac{\delta}{\omega} - \omega \gamma \tilde\delta' 
\]
where $\tilde\delta' = {\bf d}\tilde\delta(-\omega) / {\bf d}\omega$. 
Similarly, $\tilde\delta' = - \omega^{-1} \tilde\delta - \omega 
\tilde\gamma \delta'$. Combining the two equations, we obtain
\begin{equation}
\label{eq:delta'} 
\delta' = \frac{\gamma\tilde\delta - \omega^{-1} \delta}
{1 - \omega^2 \gamma \tilde\gamma} . 
\end{equation} 
Since $T = [ \omega(I + \tilde\delta D) ]^{-1}$ and 
$\tilde T = [ \omega(I + \delta \tilde D) ]^{-1}$, we have
\begin{equation}
\label{eq:T} 
T = \omega^{-1} I_N - \tilde\delta D T, 
\quad 
\tilde T = \omega^{-1} I_n - \delta \tilde D \tilde T . 
\end{equation} 
This leads to 
\begin{equation} 
\label{eq:DT2} 
\frac 1n \tr D T^2 = \frac 1n \tr DT( ( \omega^{-1} I - \tilde\delta D T)  
= \omega^{-1} \delta - \gamma \tilde\delta , \ \text{and} \ 
\frac 1n \tr \tilde D \tilde T^2 = 
\omega^{-1} \tilde\delta - \tilde\gamma \delta . 
\end{equation} 
Combining with \eqref{eq:delta'} and \eqref{eq:g'}, we obtain the first 
equation of \eqref{eq:tgamma'}, the second being obtained similarly. Using
the first equation, the term $R_1$ can be expressed as 
\[
R_1 = \frac 12 \left( \omega^2 \gamma \tilde\gamma' + 2 \omega \gamma
\tilde\gamma \right) . 
\]
Turning to $R_2$, we have 
\begin{align*}
R_2 &= 
- \frac{\omega^4\tilde\gamma^2}{1 - \omega^2 \gamma \tilde\gamma}
\frac{\tr (DT)^3}{n}  
\frac{\tr D T(\omega^{-1} I - \tilde\delta DT)}{n} \\
&= 
- \frac{\omega^3\delta\tilde\gamma^2}{1 - \omega^2 \gamma \tilde\gamma}
\frac{\tr (DT)^3}{n}  
+ \frac{\omega^4\tilde\delta\gamma\tilde\gamma^2}
{1 - \omega^2 \gamma \tilde\gamma}
\frac{\tr (DT)^3}{n}  \\ 
&\stackrel{(a)}{=} 
\frac{\omega^3 \tilde\gamma}{1 - \omega^2 \gamma \tilde\gamma} 
\frac{\tr (DT)^3}{n} \frac{\tr \tilde D \tilde T^2}{n} 
- \omega^2 \tilde\gamma \tilde\delta \frac{\tr (DT)^3}{n} \\ 
&\stackrel{(b)}{=} \frac 12
\left( \omega^2 \gamma' \tilde\gamma + 2 \omega\gamma\tilde\gamma \right)
- \omega^2 \tilde\gamma \tilde\delta \frac{\tr (DT)^3}{n} 
\end{align*}
where $(a)$ is due to $\tilde\gamma \delta = \omega^{-1} 
\tilde\delta - n^{-1} \tr \tilde D \tilde T^2$, see \eqref{eq:DT2}, and $(b)$
is due to \eqref{eq:tgamma'}. Considering $R_3$, we have by \eqref{eq:T},  
\[
- \omega^2 \tilde\gamma \tilde\delta \frac{\tr (DT)^3}{n} 
- \omega^2 \tilde\gamma \frac{\tr D^2 T^3}{n} 
= 
- \omega^2 \tilde\gamma \frac{\tr D^2 T^2( T + \tilde\delta D T)}{n} 
= 
- \omega \gamma \tilde\gamma . 
\]
We therefore have 
$R(\omega) = 
0.5 \left(
\omega^2 \gamma' \tilde\gamma + \omega^2 \gamma \tilde\gamma' + 
2 \omega \gamma \tilde\gamma \right) 
= 0.5 ( \omega^2 \gamma \tilde\gamma )'$. 
Proposition \ref{prop:bias} is proven. 

\begin{appendix}

\section{Proofs for Section \ref{sec:notations}}  

\subsection{Proofs of Eq. \eqref{eq:t-tilde-jj} and Eq. \eqref{eq:T-cal(T)_j}} 
\label{expr-T_j} 

Proof of Eq. \eqref{eq:t-tilde-jj} mainly relies on matrix identity 
\eqref{eq:mx-inversion} and on the following identity for the inverse 
of a partitioned matrix (see for instance \cite[Section 0.7.3]{HorJoh90}):
\begin{equation}
\label{eq:partition-mat}
\textrm{If}\quad A=\left[
\begin{array}{cc}
a_{11} & A_{12} \\
A_{21} & A_{22} 
\end{array}\right]\ ,\qquad \textrm{then}\quad 
\left(A^{-1}\right)_{11} = 
\left( a_{11} -A_{12} A_{22}^{-1}A_{21}\right)^{-1}\ .
\end{equation}
To lighten the
computations, let us introduce the following notations:
$$
{\mathcal I} = \left( I_n +\tilde \delta D\right)^{-1},\qquad
\tilde{\mathcal I} = \left( I_{N-1} +\delta \tilde D_1\right)^{-1}\ .
$$

In order to express a diagonal element of $\tilde T$, say $\tilde
t_{11}$ (without loss of generality), let us first write:
$$
\tilde T = \left[
\begin{array}{cc}
-z(1+\delta \tilde d_1) + a^*_1 {\mathcal I} a_1 & a^*_1 {\mathcal I} A_1\\
A_1^* {\mathcal I} a_1 & -z\tilde{\mathcal I}^{-1} + A_1^* {\mathcal I} A_1
\end{array}
\right]^{-1}\ .
$$
Hence, according to \eqref{eq:partition-mat}:
\begin{eqnarray*}
\frac 1{\tilde t_{11}} & = & -z(1+\delta \tilde d_1) + a^*_1 {\mathcal I} a_1 -
a^*_1 {\mathcal I} A_1 \left[ -z\tilde{\mathcal I}^{-1} + A_1^* {\mathcal I} A_1 \right]^{-1} A_1^* {\mathcal I} a_1 \\
&\stackrel{(a)}=& -z(1+\delta \tilde d_1) + a^*_1 {\mathcal I} a_1 -
a^*_1 {\mathcal I} A_1 \left[ -\frac 1z \tilde {\mathcal I} 
+ \frac 1z \tilde {\mathcal I} A_1^* {\mathcal T}_1 A_1 \tilde {\mathcal I} 
\right] A_1^* {\mathcal I} a_1 \\
&\stackrel{(b)}= & -z(1+\delta \tilde d_1) + a^*_1 {\mathcal I} a_1 
+\frac 1z a_1^* {\mathcal I} \left( {\mathcal T}_1^{-1} +z{\mathcal I}^{-1}\right) {\mathcal I} a_1\\
&&\qquad \qquad -\frac 1z a_1^* {\mathcal I} A_1 \tilde {\mathcal I} A_1^* \left[
I_N +z {\mathcal T}_1 {\mathcal I}^{-1}
\right] {\mathcal I} a_1\\
&\stackrel{(b)}= & -z(1+\delta \tilde d_1) + 2 a^*_1 {\mathcal I} a_1 + \frac 1z a_1^* 
{\mathcal I} {\mathcal T}_1^{-1} {\mathcal I} a_1 \\
&&\qquad \qquad 
-\frac 1z a_1^* {\mathcal I} \left[ {\mathcal T}_1^{-1} +z{\mathcal I}^{-1} \right]  \left[
I_N +z {\mathcal T}_1 {\mathcal I}^{-1}
\right] {\mathcal I} a_1 \\
&=& -z(1+\delta \tilde d_1) - z a_1^* {\mathcal T}_1 a_1\ ,
\end{eqnarray*}
where $(a)$ follows from \eqref{eq:mx-inversion}, $(b)$ from equalities 
$$
{\mathcal T}_1 A_1 \tilde {\mathcal I} A_1^*  = I_N +z {\mathcal T}_1 {\mathcal I}^{-1}\quad \textrm{and}\quad 
A_1 \tilde {\mathcal I} A_1^*  = {\mathcal T}_1^{-1} +z{\mathcal I}^{-1}\ 
$$ 
which follow from the mere definition of ${\mathcal T}_1$. Finally,
\eqref{eq:t-tilde-jj} is established.

Let us now turn to the proof of \eqref{eq:T-cal(T)_j}. Notice first
that $T$ can be expressed as 
$$
T=\left[ -z(I_N +\tilde \delta D) +A_1 (I_{n-1} +\delta \tilde
  D_1)^{-1} A_1^*+ \frac{a_1 a_1^*}{1+\delta \tilde d_1} \right]^{-1}\ .
$$
Applying \eqref{eq:mx-inversion} readily yields:
$$
T= {\mathcal T}_1 - {\mathcal T}_1 a_1 \left( 1+\delta \tilde d_1
  +a_1^* {\mathcal T}_1 a_1\right) a_1^* {\mathcal T}_1\ .
$$
It remains to multiply by $a_1^*$ (left), $b$ (right) and to 
use \eqref{eq:t-tilde-jj} to establish \eqref{eq:T-cal(T)_j}. 

\subsection{Proof of Inequality \eqref{eq:BS-lemma-2}}
\label{bound-e} 
We provide here some elements to establish that $ \mathbb{E}|e_j|^p =
{\mathcal O}(n^{-p/2})$. Recall the definition \eqref{eq:def-e} of
$e_j$ and write:
$$
\mathbb{E}|e_j|^p \le K \left\{
\mathbb{E}\left| y_j^* Q_j y_j - \frac {\tilde d_j}n \mathrm{Tr} DQ_j\right|^p
+\mathbb{E}\left| a_j^* Q_j y_j \right|^p  + \mathbb{E}\left| y_j^* Q_j a_j \right|^p
\right\}\ .
$$
The first term of the r.h.s. can be directly estimated with the help of Lemma \ref{lm-approx-quadra}.
The two remaining terms are similar and can be estimated in the following way:
\begin{eqnarray*}
\mathbb{E}\left| a_j^* Q_j y_j \right|^p &=& \mathbb{E}\left( y_j^* Q^*_j a_j a_j^* Q_j y_j \right)^{p/2}\\
&\le & K \left( \mathbb{E}\left| y_j^* Q^*_j a_j a_j^* Q_j y_j - \frac{d_j^2}n \mathrm{Tr} Q_j^* a_j a_j^* Q_j \right|^{p/2} + \mathbb{E}\left| \frac 1n \mathrm{Tr} Q_j^* a_j a_j^* Q_j \right|^{p/2}
\right) \ .
\end{eqnarray*}
The first term of the r.h.s. can be handled with the help of Lemma
\ref{lm-approx-quadra} (notice that $Q^*_j a_j a_j^* Q_j$ is of rank one and
has a bounded spectral norm), and the second term is directly of the
right order.

\subsection{Proof of Theorem \ref{theo:quadra}}
\label{anx:th-quadra}

Items \eqref{quadra-sum-bounded}--\eqref{conv-quadra} of Theorem 
\ref{theo:quadra} are shown in \cite{HLNV10pre}. Let us show 
Theorem \ref{theo:quadra}-\eqref{eq-u(Qj-Tj)v}. 
Denote by $(\delta_j, \tilde\delta_j)$ the solution 
of System \eqref{eq:fundamental} when $A$ and $\tilde D$ are replaced with
$A_j$ and $\tilde D_j$ respectively. Let $T_j$ and $\tilde T_j$ be the 
matrices associated to $(\delta_j, \tilde\delta_j)$ as in Eq. \eqref{eq:def-T}. 
Then $\E| u^* (Q_j - T_j) v |^{2p} \leq K_p n^{-p}$ by Item 
\eqref{conv-quadra}, and we only need to show that $| u^* ( {\mathcal T}_j 
- T_j ) v | \leq K/\sqrt{n}$. 
We have 
\begin{multline*}
| \delta - \delta_j | = \frac 1n \left| \tr D ( T - T_j) \right| \\ 
\leq \frac 1n \left| \tr \E D ( T-Q ) \right| + 
\frac 1n \left| \tr \E D ( Q-Q_j ) \right| + 
\frac 1n \left| \tr \E D ( Q_j - T_j ) \right| 
= {\mathcal O}(n^{-1})
\end{multline*} 
by Item \eqref{speed(T-Q)} and Lemma \ref{lem:rank1-perturbation}. 
Moreover, 
\[
| \tilde\delta - \tilde\delta_j | 
\leq \frac 1n \left| \tr \E \tilde D ( \tilde T-\tilde Q ) \right| + 
\frac 1n 
    \left| \E ( \tr \tilde D \tilde Q- \tr \tilde D_j \tilde Q_j ) \right| + 
\frac 1n \left| \tr \E \tilde D_j ( \tilde Q_j - \tilde T_j ) \right| . 
\]
In order to deal with the middle term at the r.h.s., assume without generality
loss that $j=1$. Using the identity in \cite[Section 0.7.3]{HorJoh90} 
for the inverse of a partitioned matrix, we obtain 
\[
\tilde Q = \begin{bmatrix} 
\tilde q_{11} & \times \\
\times & \tilde Q_1 + \tilde q_{11} \tilde Q_1 \Sigma_1^* \eta_1 
\eta_1^* \Sigma_1 \tilde Q_1 
\end{bmatrix}
\] 
hence $\E ( \tr \tilde D \tilde Q- \tr \tilde D_1 \tilde Q_1 ) = 
{\mathcal O}(1)$, which shows that $| \tilde\delta - \tilde\delta_j |  = 
{\mathcal O}(n^{-1})$. We now have 
\begin{multline*} 
u^* ( {\mathcal T}_j - T_j ) v = 
u^* {\mathcal T}_j \left( T_j^{-1} - {\mathcal T}_j^{-1} \right) T_j v \\ 
= 
u^* {\mathcal T}_j \left( \rho (\tilde\delta_j - \tilde\delta) D + 
 (\delta - \delta_j) 
A_j (I+\delta_j \tilde D_j)^{-1} \tilde D_j (I+\delta \tilde D_j)^{-1} 
A_j^* \right)  T_j v 
= {\mathcal O}(n^{-1})   
\end{multline*} 
which proves Item \eqref{eq-u(Qj-Tj)v}. In order to prove Item 
\eqref{var(trUQ)}, we develop $\tr U(Q-\E Q)$ as a sum of martingale 
differences: 
\begin{multline*} 
\tr U(Q-\E Q) = \sum_{j=1}^n (\E_j - \E_{j-1}) \tr U Q \\ 
= \sum_{j=1}^n (\E_j - \E_{j-1}) \tr U ( Q - Q_j) 
= - \sum_{j=1}^n (\E_j - \E_{j-1})
( \rho \tilde q_{jj} \eta_j^* Q_j U Q_j \eta_j )
\end{multline*} 
by \eqref{eq:woodbury}, hence
$\E\left| \tr U(Q - \E Q) \right|^2 = 
\sum_{j=1}^n \E \left| (\E_j - \E_{j-1})
( \rho \tilde q_{jj} \eta_j^* Q_j U Q_j \eta_j ) \right|^2
$. We now use \eqref{eq:diff-q-b}. We have 
\begin{multline*} 
\E \left| (\E_j - \E_{j-1})
( \rho \tilde b_{j} \eta_j^* Q_j U Q_j \eta_j ) \right|^2 \\ 
= 
\E\left| \E_j \left( \rho \tilde b_{j} \left( 
\eta_j^* Q_j U Q_j \eta_j - \tilde d_j \frac{\tr D  Q_j U Q_j}{n} - 
a_j^*  Q_j U Q_j a_j \right) \right) \right|^2 = {\mathcal O}(n^{-1})  
\end{multline*} 
by Lemma \ref{lm-approx-quadra}, and furthermore, 
\[
\E \left| (\E_j - \E_{j-1})
( \rho^2 \tilde q_{jj} \tilde b_j e_j \eta_j^* Q_j U Q_j \eta_j ) \right|^2
\leq K \left( \E e_j^4 \ \E | \eta_j^* Q_j U Q_j \eta_j |^4 \right)^{1/2} 
= {\mathcal O}(n^{-1})  
\]
by  Lemma \ref{lm-approx-quadra} and \eqref{eq:BS-lemma-2}. This shows 
Th.\ref{theo:quadra}-\eqref{var(trUQ)}.

\subsection{Proof of Lemma \ref{lm:estimates}}  
\label{app:estimates-proof}
The two first upper bounds are easy to obtain, given
that $\delta_n$ and $\tilde \delta_n$ are Stieltjes transforms of
nonnegative measures with respective total mass $n^{-1} \tr D$ and
$n^{-1} \tr \tilde D$. Now $\tr DT^2 \le \dmax \, \tr T^2$ by Inequality 
\eqref{eq:von-neumann-lite}, which in turn is smaller than $N\dmax
\rho^{-2}$, hence the third upper bound, and the other upper bounds can be
proven similarly. Let us now prove the first lower bound.
\begin{eqnarray*}
\tr D &=& \tr (T^{\frac 12} D T^{\frac 12} T^{-1}) 
\ \stackrel{(a)}{\le}\  \tr (DT)\times \| T^{-1}\|\ ,\\
&\le& \tr (DT)\times \left( \rho(1 + \tilde \delta_n \dmax) + \amax^2 \| (I + \delta_n \tilde D)^{-1}\| \right)\ ,  \\
&\stackrel{(b)}\le& \tr (DT)\times \left( \rho + \dmax \dtmax + \amax^2 \right)\ ,
\end{eqnarray*}
where $(a)$ follows from \eqref{eq:von-neumann-lite} and $(b)$ from
the upper bound on $\tilde\delta_n$.  This readily yields $\delta_n$'s lower
bound and $\tilde \delta_n$'s lower bound which can be proven
similarly. Writing $\tr D \leq (\tr D T^2) \|T^{-1}\|^2$, we obtain the 
lower bounds on $n^{-1} \tr DT^2$ and $n^{-1} \tr \tilde D\tilde T^2$ 
similarly. The lower bound for $\gamma_n$ follows from the same ideas:
\begin{eqnarray*}
\left( \frac 1N \tr D\right)^2 &\le & \frac 1N \tr D^2\ = \ \frac 1N \tr (T^{\frac 12} D^2 T^{\frac 12} T^{-1})\ ,\\
&\le & \frac 1N \tr (T^{\frac 12} D^2 T^{\frac 12})\times  \| T^{-1}\| \ = \ 
\frac 1N \tr (T^{\frac 12}D T^{\frac 12}T^{-1}T^{\frac 12}D  T^{\frac 12})\times  \| T^{-1}\|\ , \\
&\le & \frac 1N \tr (TDTD)\times  \| T^{-1}\|^2\ ,
\end{eqnarray*}
and one readily obtains $\gamma_n$'s lower bound (and similarly
$\tilde \gamma_n$'s lower bound) using Assumption {\bf A-\ref{ass:D}}
and the upper estimate previously obtained for $\|T^{-1}\|$.

The two last series of inequalities related to $n^{-1} \sum_{i=1}^N
d_i^2 t_{ii}^2$ and $n^{-1} \sum_{j=1}^n \tilde d_j^2 \tilde t_{jj}^2$
can be proven with similar arguments (lower bounds are in fact easier
to obtain as one can directly get lower bounds for $t_{ii}$ and $\tilde t_{jj}$
- using \eqref{eq:t-tilde-jj} for instance).

\subsection{Proof of Lemma \ref{lm:var-bounds}} 
\label{app:bounds-proof}
From \eqref{eq:def-T}, 
$T A( I + \delta \tilde D)^{-1} A^* = I - \rho T (I + \tilde\delta D)$. 
Moreover, 
$( I + \delta \tilde D)^{-1}\tilde D 
= \delta^{-1} I - \delta^{-1} ( I + \delta \tilde D )^{-1}$. Hence
\begin{eqnarray}
\frac 1n \tr D^{1/2} T A(I + \delta \tilde D)^{-2} 
\tilde D A^* T D^{\frac 12} &\leq&
\frac{1}{n \delta} \tr D T A(I + \delta \tilde D)^{-1} A^* T \nonumber \\
&=& 1 - \frac{\rho}{n\delta} \tr D T^2 - \rho \frac{\tilde\delta}{\delta} \gamma \label{bnd-Fn} 
\end{eqnarray} 
which proves the first assertion with the help of the results of Lemma
\ref{lm:estimates}. Similarly, 
\begin{equation}
\label{bnd-tildeFn} 
\frac 1n \tr \tilde D^{1/2} \tilde T A^* (I + \tilde \delta D)^{-2} 
D A \tilde T \tilde D^{\frac 12} \leq
1 - \frac{\rho}{n\tilde \delta} \tr \tilde D \tilde T^2 
- \rho \frac{\delta}{\tilde\delta} \tilde\gamma
\end{equation} 
We now show that the left hand sides (l.h.s.) of \eqref{bnd-Fn} and 
\eqref{bnd-tildeFn} are equal. Using the well known matrix identity 
$(I+UV)^{-1} U = U(I+VU)^{-1}$, 
\begin{align*}
T A (I + \delta \tilde D)^{-1} &= 
\rho^{-1} ( I + \tilde\delta D)^{-1} 
\left( I + \rho^{-1} A (I + \delta \tilde D)^{-1} A^* (I + \tilde\delta D)^{-1}
\right)^{-1} 
A (I + \delta \tilde D)^{-1}  \\
&= 
\rho^{-1} ( I + \tilde\delta D)^{-1} 
 A (I + \delta \tilde D)^{-1} 
\left( I + \rho^{-1} A^* (I + \tilde\delta D)^{-1}A (I + \delta \tilde D)^{-1} 
\right)^{-1} \\ 
&= 
( I + \tilde\delta D)^{-1} A \tilde T . 
\end{align*} 
Plugging this identity in the l.h.s.~of \eqref{bnd-Fn}, and identifying with 
the l.h.s.~of \eqref{bnd-tildeFn}, we obtain the result. 
As a consequence, we have
\begin{align*} 
\left( 1 - \frac 1n \tr D^{1/2} T A(I + \delta \tilde D)^{-2} 
\tilde D A^* T D^{\frac 12} \right)^2 &\geq 
\left( 
\frac{\rho}{n\delta} \tr D T^2 + \rho \frac{\tilde\delta}{\delta} \gamma 
\right)
\left( \frac{\rho}{n\tilde \delta} \tr \tilde D \tilde T^2 
+ \rho \frac{\delta}{\tilde\delta} \tilde\gamma \right) \\ 
&\geq 
\rho^2 \gamma\tilde\gamma + 
\frac{\rho}{n\delta} \tr D T^2
\frac{\rho}{n\tilde \delta} \tr \tilde D \tilde T^2 
\end{align*} 
which is the second assertion. By Lemma \ref{lm:estimates}, this leads to 
$\liminf \Delta_n > 0$. Lemma \ref{lm:var-bounds} is proven.

\section{Additional proofs for Section \ref{sec:proof-clt-2}} 

\subsection{Proof of Lemma \ref{lem:control_variances}}\label{app:control_variances}
Let us show that $\max_j \var( a^* \E_j Q D \E_j Q a ) = {\mathcal O}(n^{-1})$. 
We have 
\begin{align}
a^* \E_j Q D \E_j Q a - \E a^* \E_j Q D \E_j Q a
&= \sum_{i=1}^j (\E_i - \E_{i-1}) 
(a^* \E_j Q D \E_j Q a ) \nonumber \\  
&= \sum_{i=1}^j (\E_i - \E_{i-1}) 
\left\| D^{1/2} \E_j( Q_i - \rho \tilde{q}_{ii} Q_i \eta_i \eta_i^* Q_i) 
a \right\|^2 \nonumber \\
&= \sum_{i=1}^j (\E_i - \E_{i-1}) 
\left[ -2 \rho \re\left( 
a^* (\E_j Q_i) D (\E_j \tilde{q}_{ii} Q_i \eta_i \eta_i^* Q_i ) a
\right)\phantom{D^{1/2}} \right. \nonumber \\ 
& \ \ \ \ \ \ \ \ \ \ \ \ \ \ \ \ \ \ \ \ \ \ \ \ \ \ \ \ \ \ \ \ \ \ \ 
\left.  
+ \| \E_j ( \rho\tilde{q}_{ii} \, \eta_i^* Q_i  a \, D^{1/2} Q_i \eta_i ) \|^2 
\right] \nonumber \\
&\stackrel{\triangle}= 2 \re(X) + Z\ , \label{eq-a*(EjQ)^2a}   
\end{align} 
and the variance of $a^* \E_j Q D \E_j Q a$ is the sum of the variances of 
these martingale increments. Consider the term $X$. 
Recalling that 
$\tilde{q}_{ii} = \tilde{b}_i - \rho \tilde{q}_{ii} \tilde{b}_i e_i$, 
\begin{align*} 
X &= -\rho \sum_{i=1}^j (\E_i - \E_{i-1}) 
( \tilde{b}_{i} \, \eta_i^* Q_i a a^*  (\E_j Q_i) D Q_i \eta_i )  
 + \rho^2 \sum_{i=1}^j (\E_i - \E_{i-1}) 
( \tilde{b}_{i} \tilde{q}_{ii} e_i 
\eta_i^* Q_i a a^* (\E_j Q_i) D Q_i \eta_i ) \\ 
&\stackrel{\triangle}= X_1 + X_2 \ . 
\end{align*} 
Let $M_i = Q_i a a^*  (\E_j Q_i) D Q_i$. The term $X_1$ satisfies 
\begin{equation} 
\E|X_1|^2 = 
\rho^2 \sum_{i=1}^j 
\E \left| \E_i \tilde{b}_{i} \left( 
y_i^* M_i y_i - \frac{\tilde d_i}{n} \tr D M_i + 
y_i^* M_i a_i + a_i^* M_i y_i 
\right) \right|^2  \ .
\label{eq:X1-variance} 
\end{equation} 
Since $M_i$ is a rank one matrix, 
$\sum_{i=1}^j \E | y_i^* M_i y_i - \tilde d_i \tr D M_i / n |^2 \leq K/n$. 
Moreover, 
\begin{multline*} 
\sum_{i=1}^j \E\left| y_i^* M_i a_i \right|^2 
= 
\frac 1n \sum_{i=1}^j \tilde d_i \E \left( a^* Q_i D Q_i a \ 
\left| a^*  (\E_j Q_i) D Q_i a_i \right|^2 \right) 
\leq \frac Kn \sum_{i=1}^j \E \left| a^*  (\E_j Q_i) D Q_i a_i \right|^2 . 
\end{multline*}
The summand at the r.h.s. of the inequality satisfies: 
\begin{align*} 
\left| a^*  (\E_j Q_i) D Q_i a_i \right|^2 
&\leq 4 \left( 
\left| a^*  (\E_j Q ) D Q a_i \right|^2 + 
\left| a^*  (\E_j(Q_i - Q)) D ( Q_i - Q) a_i \right|^2 + \right. \\ 
& \ \ \ \ 
 \ \ \ \ \ \ \ \ \ \ \ \ \ \ \ \ 
\left. \left| a^*  (\E_j Q ) D ( Q_i - Q) a_i \right|^2 + 
\left| a^*  (\E_j(Q_i - Q)) D Q a_i \right|^2 \right) \\
&\stackrel{\triangle}= 4( W_{i,1} + W_{i,2} + W_{i,3} + W_{i,4} ) . 
\end{align*}  
Recalling that $A_{1:j} = [a_1, \cdots, a_j ]$, we have 
$$
\sum_{i=1}^j W_{i,1}  = a^* (\E_j Q) D Q A_{1:j} 
A_{1:j}^* Q D (\E_j Q) a \leq K\ .
$$ 
Recalling \eqref{eq:Qj=f(Q,eta)} and \eqref{eq:1+etaQeta}, and writing $\xi_i = 1+\eta_i^* Q_i \eta_i$, 
we have: 
\[
W_{i,2} \leq  
\frac{a_i^* Q \eta_i \eta_i^* Q a_i}{1-\eta_i^* Q \eta_i} \times 
a^* \E_j\left( \xi_i \, Q \eta_i \eta_i^* Q \right) \ 
D \frac{Q \eta_i \eta_i^* Q}{1-\eta_i^* Q \eta_i} D \ 
\E_j\left( \xi_i \, Q \eta_i \eta_i^* Q \right) a \ .
\]
As $\| (1- \eta_i^* Q \eta_i)^{-1} Q \eta_i \eta_i^* Q \| = 
\| Q - Q_i \| \leq K$ and $\| Q \Sigma \| \leq K$, we have 
$\sum_{i=1}^j \E W_{i,2} \leq 
\sum_{i=1}^j \E \left[ | a^* Q \eta_i |^2 | \xi_i |^2 \right]$. Writing
$\xi_i = (\rho\tilde b_i)^{-1} + e_i$, and noticing that 
$(\rho\tilde b_i)^{-1}$ is bounded, we obtain:
\[
\sum_{i=1}^j \E W_{i,2} \leq 
2 \E a^* Q \Sigma_{1:j} \diag ( (\rho \tilde b_1)^{-1},\dots,
(\rho \tilde b_j)^{-1}) \Sigma_{1:j}^* Q a \ + \ 
K \sum_{i=1}^j \E| e_i |^2 
\leq K . 
\]
The terms $W_{i,3}$ and $W_{i,4}$ can be handled by similar derivations. 
\\
We get that  
$\sum_{i=1}^j \E | y_i^* M_i a_i |^2 \leq {K} / {n}$. 
The terms $a_i^* M_i y_i$ on the right hand side of \eqref{eq:X1-variance} 
satisfy 
$\sum_{i=1}^j \E \left| a_i^* M_i y_i \right|^2 
\leq 
K n^{-1} \sum_{i=1}^j \E | a_i^* Q_i a |^2 \leq K / n$, 
which proves that $\E|X_1|^2 \leq K/n$. 

We now consider $X_2$, which satisfies $\E|X_2|^2 \leq 2\rho^4
\sum_{i=1}^j \E | \tilde{b}_{i} \tilde{q}_{ii} e_i \eta_i^* M_i \eta_i
|^2$. We have
\begin{eqnarray*} 
\sum_{i=1}^j 
\E \left| \tilde{b}_{i} \tilde{q}_{ii} e_i a_i^* M_i a_i \right|^2 
&\leq &K \sum_{i=1}^j 
\E \E^{(i)} \left| e_i a_i^* M_i a_i \right|^2  \\ 
&\leq& \frac Kn \sum_{i=1}^j \E| a_i^* M_i a_i |^2 
\leq \frac Kn \sum_{i=1}^j \E| a_i^* Q_i a |^2 
\leq \frac Kn \ , 
\end{eqnarray*} 
where 
$\E^{(i)} = \E[ \cdot | y_1, \ldots, y_{i-1}, y_{i+1}, \ldots, y_n]$. 
Moreover, 
\[
\sum_{i=1}^j 
\E \left| \tilde{b}_{i} \tilde{q}_{ii} e_i y_i^* M_i a_i \right|^2 
\leq 
K \sum_{i=1}^j (\E | e_i |^4)^{1/2} (\E | y_i^* M_i a_i |^4)^{1/2}  
\leq \frac Kn 
\]
and similarly for the terms in $a_i^* M_i y_i$ and in $y_i^* M_i y_i$. 
We get that $\E|X_2|^2 = {\mathcal O}(n^{-1})$. 
We now turn to the term $Z$ of equation \eqref{eq-a*(EjQ)^2a}. 
To control the variance of $Z$, we only need to control the
variances of the terms: 
\begin{align*} 
Z_1 &= \sum_{i=1}^j (\E_i - \E_{i-1})
(\E_j \tilde{q}_{ii} a^* Q_i y_i \eta_i^* Q_i ) D 
(\E_j \tilde{q}_{ii} Q_i \eta_i a_i^* Q_i a )  , \\
Z_2 &= \sum_{i=1}^j (\E_i - \E_{i-1})
\| \E_j(\tilde{q}_{ii}  y_i^* Q_i a \, D^{1/2} Q_i \eta_i)\|^2 , \\
Z_3 &= \sum_{i=1}^j (\E_i - \E_{i-1})
\| \E_j(\tilde{q}_{ii}  a_i^* Q_i a \, D^{1/2} Q_i \eta_i)\|^2 . 
\end{align*}
The first term satisfies 
\begin{eqnarray*}
\E|Z_1|^2 &\leq& 2 \sum_{i=1}^j \E  
\left| (\E_j \tilde{q}_{ii} a^* Q_i y_i \eta_i^* Q_i ) D 
(\E_j \tilde{q}_{ii} Q_i \eta_i a_i^* Q_i a ) \right|^2 \\
&\leq& 
2 \sum_{i=1}^j \E 
\left| (\E_j a^* Q_i y_i \eta_i^* Q_i ) D Q_i \eta_i a_i^* Q_i a  \right|^2 \\
&=& 2 \sum_{i=1}^j \E \left[ | a_i^* Q_i a |^2 \, \E^{(i)} 
\left| (\E_j a^* Q_i y_i \eta_i^* Q_i ) D Q_i \eta_i \right|^2 \right] \\
&\leq& \frac{K}{n} \sum_{i=1}^j \E | a_i^* Q_i a |^2 
\quad = \quad {\mathcal O}(n^{-1}) \ ,
\end{eqnarray*}
where the second inequality comes from $\E | \E_j(X) \E_j(Y) |^2 = 
\E | \E_j (X \E_j(Y)) |^2 \leq \E | X \E_j(Y) |^2$. 
The terms $Z_2$ and $Z_3$ can be handled similarly; details are omitted.
The result is $\E|Z|^2 \leq K/n$. 

Hence, $\var\left( a^* (\E_j Q)^2 a \right) = {\mathcal O}(n^{-1})$.
The estimate $\var\left( \tr (\E_j Q) D (\E_j Q) \right) = {\mathcal
  O}(1)$ can be established similarly.

\subsection{Proof of Lemma \ref{lm-E(aQa)}} 
\label{anx-lemma-E(aQa)} 

Recalling the expression \eqref{eq:t-tilde-jj} of $\tilde t_{\ell\ell}$, 
we notice that 
$(1 - \rho\tilde t_{\ell\ell} a_\ell^* {\mathcal T}_{\ell} a_\ell)
= \rho\tilde t_{\ell\ell} (1 + \tilde d_\ell \delta)$ is bounded below. 
It follows from Theorem \ref{theo:quadra}-\eqref{conv-quadra} that 
\[
\sum_{\ell=1}^j 
\frac{\alpha_\ell \, u^* T a_\ell \, \E\left[ a_\ell^* Q u \right]}
{1 - \rho\tilde t_{\ell\ell} a_\ell^* {\mathcal T}_{\ell} a_\ell} 
= 
\sum_{\ell=1}^j 
\frac{\alpha_\ell \, u^* T a_\ell \, a_\ell^* T u }
{\rho\tilde t_{\ell\ell} (1 + \tilde d_\ell \delta)} 
+ {\mathcal O}(n^{-1/2}) 
\]
Moreover, 
\begin{multline*} 
\sum_{\ell=1}^j \alpha_\ell \, u^* T a_\ell 
\left(\frac{\E\left[ a_\ell^* Q u \right]}
{1 - \rho\tilde t_{\ell\ell} a_\ell^* {\mathcal T}_{\ell} a_\ell} 
- \E\left[ a_\ell^* Q_\ell u \right] \right) 
= 
\sum_{\ell=1}^j \alpha_\ell \, u^* T a_\ell 
\E \left[ a_\ell^* Q_\ell u 
\left( \frac{1 - \rho\tilde q_{\ell\ell} a_\ell^* Q_\ell \eta_\ell}
{1 - \rho\tilde t_{\ell\ell} a_\ell^* {\mathcal T}_{\ell} a_\ell} - 1 
\right) \right] \\
- \sum_{\ell=1}^j \alpha_\ell \, u^* T a_\ell 
\frac{\E[ \rho\tilde q_{\ell\ell} a_\ell^* Q_\ell \eta_\ell \, 
y_\ell^* Q_\ell u ]}
{1 - \rho\tilde t_{\ell\ell} a_\ell^* {\mathcal T}_{\ell} a_\ell}
= \varepsilon_1 + \varepsilon_2 
\end{multline*} 
We have $\varepsilon_1 = \sum_{\ell=1}^j \alpha_\ell \, u^* T a_\ell 
\E \left[ a_\ell^* Q_\ell u \, \xi_\ell \right]$ where 
$\E | \xi_\ell |^p \leq K n^{-p/2}$ for $p\geq 2$. It follows that 
$| \varepsilon_1 | \leq \left( \sum_{\ell=1}^j \alpha_\ell^2 
| u^* T a_\ell |^2 \E \xi_\ell^2 \right)^{1/2} 
\left( \sum_{\ell=1}^j \E | a_\ell^* Q_\ell u |^2 \right)^{1/2} 
\leq K / \sqrt{n}$ by Theorem \ref{theo:quadra}-\eqref{quadra-sum-bounded}. 
By writing 
\[
\varepsilon_2 = 
- \sum_{\ell=1}^j \alpha_\ell \, u^* T a_\ell 
\frac{\E\left[ \left( \rho\tilde q_{\ell\ell} a_\ell^* Q_\ell \eta_\ell 
- \E[ \rho\tilde q_{\ell\ell} a_\ell^* Q_\ell \eta_\ell ] \right)
\, y_\ell^* Q_\ell u \right]}
{1 - \rho\tilde t_{\ell\ell} a_\ell^* {\mathcal T}_{\ell} a_\ell}
\] 
and proceeding similarly to $\varepsilon_1$, we obtain 
$|\varepsilon_2 | \leq K / \sqrt{n}$, which completes the proof of 
Lemma \ref{lm-E(aQa)}.

\subsection{Proof of Lemma \ref{lm-Deltaj}} 
\label{prf:lm-Deltaj} 
The $F_j$ increase to $F_n = n^{-1} \tr D^{1/2} T A (1 + \delta \tilde D)^{-2}
{\tilde D} A^* T D^{1/2} < 1$ by Lemma \ref{lm:var-bounds}. As $\gamma > 0$ 
and $M_j$ and $G_j$ are increasing, $\bs \Delta_j$ is decreasing. In order 
to show that $\bs\Delta_n = \Delta_n$, we only need to show that 
$M_n + G_n = \rho^2 \tilde\gamma$. We have
\[
G_n = \frac 1n \tr \tilde D ( I + \delta \tilde D)^{-2} A^* T A 
\tilde D ( I + \delta \tilde D)^{-2} A^* T A 
- \frac 1n \sum_{k=1}^n 
\left( \frac{\tilde d_k \, a_k^* T a_k}{(1+\delta\tilde d_k)^2} \right)^2
\]
Recall from \eqref{eq:t-tilde-jj} and \eqref{eq:T-cal(T)_j} that 
$(1 + \delta \tilde d_k)^{-2} a_k^* T a_k = (1 + \delta \tilde d_k)^{-1} 
- \rho \tilde t_{kk}$. Hence
\[
\frac 1n \sum_{k=1}^n 
\left( \frac{\tilde d_k \, a_k^* T a_k}{(1+\delta\tilde d_k)^2} \right)^2
= 
\frac 1n \sum_{k=1}^n \rho^2 \tilde d_k^2 \tilde t_{kk}^2 
- \frac 1n \sum_{k=1}^n 
\frac{\rho \tilde d_k^2 \tilde t_{kk}}{(1+\delta\tilde d_k)} 
+ \frac 1n \sum_{k=1}^n 
\frac{\tilde d_k^2 \, a_k^* T a_k}{(1+\delta\tilde d_k)^3} 
\]
which results in 
\begin{multline*}
M_n + G_n = 
\frac{\rho}{n} \tr \tilde D \tilde T \tilde D ( I + \delta \tilde D)^{-1} 
 - \frac 1n \tr \tilde D^2 ( I + \delta \tilde D)^{-3} A^* T A \\
+ \frac 1n \tr \tilde D ( I + \delta \tilde D)^{-2} A^* T A 
\tilde D ( I + \delta \tilde D)^{-2} A^* T A  .  
\end{multline*}
Now, one can check with the help of \eqref{eq:Ttilde=f(T)} that 
$\rho^2 \tilde\gamma = \rho^2 n^{-1} \tr \tilde D \tilde T\tilde D \tilde T$ 
is equal to the r.h.s.~of this equation. Lemma \ref{lm-Deltaj} is proven.

\end{appendix}

\bibliography{math}

\def\cprime{$'$} \def\cdprime{$''$} \def\cprime{$'$} \def\cprime{$'$}
\begin{thebibliography}{10}

\bibitem{AndZei06}
G.W. Anderson and O.~Zeitouni.
\newblock A {CLT} for a band matrix model.
\newblock {\em Probab. Theory Related Fields}, 134(2):283--338, 2006.

\bibitem{Arh71}
L.~V. Arharov.
\newblock Limit theorems for the characteristic roots of a sample covariance
  matrix.
\newblock {\em Dokl. Akad. Nauk SSSR}, 199:994--997, 1971.

\bibitem{BaiSil98}
Z.~D. Bai and J.~W. Silverstein.
\newblock No eigenvalues outside the support of the limiting spectral
  distribution of large-dimensional sample covariance matrices.
\newblock {\em Ann. Probab.}, 26(1):316--345, 1998.

\bibitem{BaiSil04}
Z.~D. Bai and J.~W. Silverstein.
\newblock C{LT} for linear spectral statistics of large-dimensional sample
  covariance matrices.
\newblock {\em Ann. Probab.}, 32(1A):553--605, 2004.

\bibitem{bai-sil-book}
Zhidong Bai and Jack~W. Silverstein.
\newblock {\em Spectral analysis of large dimensional random matrices}.
\newblock Springer Series in Statistics. Springer, New York, second edition,
  2010.

\bibitem{Bil95}
P.~Billingsley.
\newblock {\em Probability and measure}.
\newblock Wiley Series in Probability and Mathematical Statistics. John Wiley
  \& Sons Inc., New York, 1995.

\bibitem{BouKho98}
A.~Boutet~de Monvel and A.~Khorunzhy.
\newblock Limit theorems for random matrices.
\newblock {\em Markov Process. Related Fields}, 4(2):175--197, 1998.

\bibitem{Cab01}
T.~Cabanal-Duvillard.
\newblock Fluctuations de la loi empirique de grandes matrices al\'eatoires.
\newblock {\em Ann. Inst. H. Poincar\'e Probab. Statist.}, 37(3):373--402,
  2001.

\bibitem{Cha09}
S.~Chatterjee.
\newblock Fluctuations of eigenvalues and second order {P}oincar\'e
  inequalities.
\newblock {\em Probab. Theory Related Fields}, 143(1-2):1--40, 2009.

\bibitem{DebMul03}
M.~Debbah and R.~M{\"u}ller.
\newblock On the limiting behavior of directional {MIMO} models.
\newblock In {\em Proceeding of the Sixth Baiona Workshop On Signal Processing
  in Communication}, Spain, September 2003.

\bibitem{DozSil07b}
R.~Brent Dozier and J.~W. Silverstein.
\newblock Analysis of the limiting spectral distribution of large dimensional
  information-plus-noise type matrices.
\newblock {\em J. Multivariate Anal.}, 98(6):1099--1122, 2007.

\bibitem{DozSil07a}
R.~Brent Dozier and J.~W. Silverstein.
\newblock On the empirical distribution of eigenvalues of large dimensional
  information-plus-noise-type matrices.
\newblock {\em J. Multivariate Anal.}, 98(4):678--694, 2007.

\bibitem{Dumont10}
J.~Dumont, W.~Hachem, S.~Lasaulce, P.~Loubaton, and J.~Najim.
\newblock On the capacity achieving covariance matrix for rician {MIMO}
  channels: An asymptotic approach.
\newblock {\em Information Theory, IEEE Transactions on}, 56(3):1048 --1069,
  march 2010.

\bibitem{Gir75}
V.~L. Girko.
\newblock {\em Sluchainye matritsy (Russian) [Random Matrices]}.
\newblock Izdat. Ob\cdprime ed. ``Vi\v s\v ca \v Skola'' pri Kiev. Gosudarstv.
  Univ., Kiev, 1975.

\bibitem{Gir01a}
V.~L. Girko.
\newblock {\em Theory of stochastic canonical equations. {V}ol. {I}}, volume
  535 of {\em Mathematics and its Applications}.
\newblock Kluwer Academic Publishers, Dordrecht, 2001.

\bibitem{Gir01b}
V.~L. Girko.
\newblock {\em Theory of stochastic canonical equations. {V}ol. {II}}, volume
  535 of {\em Mathematics and its Applications}.
\newblock Kluwer Academic Publishers, Dordrecht, 2001.

\bibitem{Gir03}
V.~L. Girko.
\newblock Thirty years of the central resolvent law and three laws on the
  {$1/n$} expansion for resolvent of random matrices.
\newblock {\em Random Oper. Stochastic Equations}, 11(2):167--212, 2003.

\bibitem{Gui02}
A.~Guionnet.
\newblock Large deviations upper bounds and central limit theorems for
  non-commutative functionals of {G}aussian large random matrices.
\newblock {\em Ann. Inst. H. Poincar\'e Probab. Statist.}, 38(3):341--384,
  2002.

\bibitem{HLN08Ieee}
W.~Hachem, O.~Khorunzhy, P.~Loubaton, J.~Najim, and L.~Pastur.
\newblock A new approach for mutual information analysis of large dimensional
  multi-antenna chennels.
\newblock {\em IEEE Trans. Inform. Theory}, 54(9):3987--4004, 2008.

\bibitem{HLN07}
W.~Hachem, P.~Loubaton, and J.~Najim.
\newblock Deterministic equivalents for certain functionals of large random
  matrices.
\newblock {\em Ann. Appl. Probab.}, 17(3):875--930, 2007.

\bibitem{HLN08}
W.~Hachem, P.~Loubaton, and J.~Najim.
\newblock A {CLT} for information-theoretic statistics of gram random matrices
  with a given variance profile.
\newblock {\em Ann. Appl. Probab.}, 18(6):2071--2130, 2008.

\bibitem{HLNV10pre}
W.~Hachem, P.~Loubaton, J.~Najim, and P.~Vallet.
\newblock On bilinear forms based on the resolvent of large random matrices.
\newblock Technical report, Télécom Paristech and Université Paris-Est, may
  2010.
\newblock arXiv:1004.3848, available at {\tt
  http://front.math.ucdavis.edu/1004.3848}.

\bibitem{HorJoh90}
Roger~A. Horn and Charles~R. Johnson.
\newblock {\em Matrix analysis}.
\newblock Cambridge University Press, Cambridge, 1990.
\newblock Corrected reprint of the 1985 original.

\bibitem{Joh98}
K.~Johansson.
\newblock On fluctuations of eigenvalues of random {H}ermitian matrices.
\newblock {\em Duke Math. J.}, 91(1):151--204, 1998.

\bibitem{Jon82}
D.~Jonsson.
\newblock Some limit theorems for the eigenvalues of a sample covariance
  matrix.
\newblock {\em J. Multivariate Anal.}, 12(1):1--38, 1982.

\bibitem{HKKN10}
A.~Kammoun, M.~Kharouf, W.~Hachem, J.~Najim, and A.~El~Kharroubi.
\newblock On the fluctuations of the mutual information for non centered {MIMO}
  channels: The non gaussian case.
\newblock In {\em Signal Processing Advances in Wireless Communications
  (SPAWC), 2010 IEEE Eleventh International Workshop on}, pages 1 --5, June
  2010.

\bibitem{KKP96}
A.~Khorunzhy, B.~Khoruzhenko, and L.~Pastur.
\newblock Asymptotic properties of large random matrices with independent
  entries.
\newblock {\em J. Math. Phys.}, 37(10):5033--5060, 1996.

\bibitem{LytPas09}
A.~Lytova and L.~Pastur.
\newblock Central limit theorem for linear eigenvalue statistics of the
  {W}igner and the sample covariance random matrices.
\newblock {\em Metrika}, 69(2-3):153--172, 2009.

\bibitem{MinSpe06}
J.~A. Mingo and R.~Speicher.
\newblock Second order freeness and fluctuations of random matrices. {I}.
  {G}aussian and {W}ishart matrices and cyclic {F}ock spaces.
\newblock {\em J. Funct. Anal.}, 235(1):226--270, 2006.

\bibitem{MSS03}
A.~L. Moustakas, S.~H. Simon, and A.~M. Sengupta.
\newblock M{IMO} capacity through correlated channels in the presence of
  correlated interferers and noise: a (not so) large {$N$} analysis.
\newblock {\em IEEE Trans. Inform. Theory}, 49(10):2545--2561, 2003.

\bibitem{SenMit00pre}
A.~M. Sengupta and P.P. Mitra.
\newblock Capacity of multivariate channels with multiplicative noise: I.
  random matrix techniques and large-n expansions for full transfer matrices.
\newblock available at http://arxiv.org/abs/physics/0010081, 2000.

\bibitem{SilBai95}
J.~W. Silverstein and Z.~D. Bai.
\newblock On the empirical distribution of eigenvalues of a class of
  large-dimensional random matrices.
\newblock {\em J. Multivariate Anal.}, 54(2):175--192, 1995.

\bibitem{SinSos98}
Ya. Sinai and A.~Soshnikov.
\newblock Central limit theorem for traces of large random symmetric matrices
  with independent matrix elements.
\newblock {\em Bol. Soc. Brasil. Mat. (N.S.)}, 29(1):1--24, 1998.

\bibitem{Sos00}
A.~Soshnikov.
\newblock The central limit theorem for local linear statistics in classical
  compact groups and related combinatorial identities.
\newblock {\em Ann. Probab.}, 28(3):1353--1370, 2000.

\bibitem{Tar06}
G.~Taricco.
\newblock On the capacity of separately-correlated {MIMO} {R}ician fading
  channels.
\newblock In {\em Proc. 49th Annual IEEE Globecom Conference, 2006}, San
  Francisco (CA), USA, November 2006.

\bibitem{Tar08}
G.~Taricco.
\newblock Asymptotic mutual information statistics of separately correlated
  {R}ician fading {MIMO} channels.
\newblock {\em IEEE Trans. Inform. Theory}, 54(8):3490--3504, 2008.

\bibitem{Tel99}
I.E. Telatar.
\newblock Capacity of multi-antenna gaussian channel.
\newblock {\em European Transactions on Telecommunications}, 1999.

\bibitem{TulVer04}
A.~Tulino and S.~Verd{\'u}.
\newblock Random matrix theory and wireless communications.
\newblock In {\em Foundations and Trends in Communications and Information
  Theory}, volume~1, pages 1--182. Now Publishers, June 2004.

\end{thebibliography}

\noindent {\sc Walid Hachem}, {\sc Malika Kharouf} and {\sc Jamal Najim},\\ 
CNRS, T\'el\'ecom Paristech\\ 
46, rue Barrault, 75013 Paris, France.\\
e-mail: \{hachem, kharouf, najim\}@telecom-paristech.fr\\
\\
\noindent {\sc Jack W. Silverstein},\\
Department of Mathematics, Box 8205\\
North Carolina State University\\
Raleigh, NC 27695-8205, USA\\ 
e-mail: jack@math.ncsu.edu\\
\\
\noindent

\end{document}